\newtheorem{theorem}{Theorem}[section]
\newtheorem*{theorem*}{Theorem}
\newtheorem{cor}[theorem]{Corollary}
\newtheorem*{cor*}{Corollary}
\newtheorem{prop}[theorem]{Proposition}
\newtheorem*{prop*}{Proposition}
\newtheorem{lemma}[theorem]{Lemma}
\newtheorem*{lemma*}{Lemma}
\newtheorem*{quest*}{Question}
\newtheorem{conj}[theorem]{Conjecture}
\newtheorem*{conj*}{Conjecture}
\newtheorem*{fact*}{Fact}
\newtheorem*{claim*}{Claim}
\theoremstyle{definition}
\newtheorem{defn}[theorem]{Definition}
\newtheorem*{defn*}{Definition}
\newtheorem*{exs*}{Examples}
\newtheorem{ex}[theorem]{Example}
\newtheorem*{ex*}{Example}
\newtheorem{notn}[theorem]{Notation}
\newtheorem*{notn*}{Notation}
\newtheorem*{assumption*}{Assumption}
\theoremstyle{remark}
\newtheorem{rmk}[theorem]{Remark}
\newtheorem*{rmk*}{Remark}
\numberwithin{equation}{section}
\DeclareMathOperator{\CC}{\mathbb{C}}
\DeclareMathOperator{\RR}{\mathbb{R}}
\DeclareMathOperator{\QQ}{\mathbb{Q}}
\DeclareMathOperator{\ZZ}{\mathbb{Z}}
\title{Some equations involving the Gamma function}
 \author{Sebastian Eterovi\'c}
 \address{School of Mathematics, University of Leeds, Leeds LS2 9JT, UK} 
 \email{s.eterovic@leeds.ac.uk}
 \author{Adele Padgett}
 \address{Kurt G\"odel Research Center, Universit\"at Wien, 1090 Wien, Austria}
 \email{adele.lee.padgett@univie.ac.at}
\date{December 8, 2023}
\thanks{SE was supported by EPSRC fellowship EP/T018461/1. AP was supported by the Fields Institute.}
\keywords{Gamma function, existential closedness, Rouch\'e's theorem}
\subjclass[2020]{30C15, 33B15, 30D35, 32A60}
\begin{document}

\begin{abstract}
    Let $V\subseteq\CC^{2n}$ be an algebraic variety with no constant coordinates and with a dominant projection onto the first $n$ coordinates. We show that the intersection of $V$ with the graph of the $\Gamma$ function is Zariski dense in $V$. Our method gives an explicit description of the distribution of these intersection points, and can be adapted for some other functions. 
\end{abstract}

\maketitle

\section{Introduction}
The $\Gamma$ function is an important analytic map which, among other things, extends the factorial operation on positive integers to complex numbers by being a solution to the difference equation $f(z+1)=zf(z)$.   
It is usually defined on the right half plane $\{z\in\CC : \Re(z)>0\}$ via the following integral:
\begin{equation*}
    \Gamma(z) := \int_0^{\infty}t^{z-1}\exp(-t)\mathrm{d}t.
\end{equation*}
From this one gets that $\Gamma(1)=1$ and that $\Gamma(n)=(n-1)!$, for every positive integer $n$. 
This function has an analytic continuation to $\mathbb{C}\setminus\mathbb{Z}^{\leq 0}$, having a simple pole at every non-positive integer. 

In this paper we prove the following result. 
\begin{theorem}
\label{thm:ecforgamma}
    Let $n$ be a positive integer and let $V\subseteq\CC^{2n}$ be an algebraic variety with no constant coordinates. Let $\pi_1:\mathbb{C}^{2n}\to\mathbb{C}^n$ denote the projection onto the first $n$ coordinates. If $\dim \pi_1(V) = n$, then $V$ has a Zariski dense set of points of the form $(\mathbf{z},\Gamma(\mathbf{z}))$.
\end{theorem}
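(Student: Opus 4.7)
The plan is to use a multivariable Rouché argument on carefully chosen polydiscs, after a standard geometric reduction.

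First, passing to an irreducible component, I would assume $V$ is irreducible of dimension $n$. Removing the singular locus of $V$ and the ramification locus of $\pi_1|_V$ makes the projection étale onto its image, and a local holomorphic section displays an open piece of $V$ as the graph of an algebraic map $\mathbf{G}=(G_1,\ldots,G_n):U\to\CC^n$ over a Euclidean-open $U\subseteq\CC^n$. The ``no constant coordinates'' hypothesis forces each $G_i$ to be non-constant algebraic, hence of polynomial growth. Zariski density in $V$ reduces to the analytic statement that the holomorphic map
\[
    F(\mathbf{z})=\bigl(\Gamma(z_1)-G_1(\mathbf{z}),\,\ldots,\,\Gamma(z_n)-G_n(\mathbf{z})\bigr)
\]
has zeros outside any prescribed proper analytic subset $W'\subset U$.

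To locate such zeros, I would apply multivariable Rouché on polydiscs $\Delta=\prod_{i=1}^n D(a_i,r)$. Compare $F$ with the ``separated'' map $F^{\ast}(\mathbf{z})=(\Gamma(z_i)-c_i)_i$, where $c_i=G_i(\mathbf{a})$, whose zeros in $\Delta$ are tuples in $\prod_i(\Gamma^{-1}(c_i)\cap D(a_i,r))$. If on the distinguished boundary $\prod_i\partial D(a_i,r)$ one has the coordinatewise bound $|G_i(\mathbf{z})-G_i(\mathbf{a})|<|\Gamma(z_i)-c_i|$, the straight-line homotopy $tF+(1-t)F^{\ast}$ is nowhere zero there, so $F$ and $F^{\ast}$ have the same number of zeros (counted with multiplicity) in $\Delta$; in particular $F$ has a zero whenever $F^{\ast}$ does.

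The main obstacle lies in choosing $\mathbf{a}\in U\setminus W'$ so that both a zero of $F^{\ast}$ exists in $\Delta$ and the Rouché dominance holds on the distinguished boundary. For the existence of a zero of $F^{\ast}$, I would arrange $\mathbf{a}$ to be an approximate fixed point of $\Gamma^{-1}\circ\mathbf{G}$: since $\mathbf{G}$ has polynomial growth and, by Stirling, the principal branch of $\Gamma^{-1}$ grows only logarithmically, the composite sends a large closed polydisc of $\CC^n$ into itself and must have a fixed point by Brouwer. For the dominance, on the circle $|z_i-a_i|=r$ Stirling gives $|\Gamma(z_i)-\Gamma(a_i)|\gtrsim r\,|\Gamma(a_i)|\,\log|a_i|$ when $\Re(a_i)$ is large, whereas $|G_i(\mathbf{z})-G_i(\mathbf{a})|$ is only polynomially bounded in $\|\mathbf{a}\|$, so taking $r$ small and $\|\mathbf{a}\|$ large secures the inequality. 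Finally, Zariski density is obtained by running the argument on infinitely many branches of $\Gamma^{-1}$ (each level set $\Gamma^{-1}(c)$ being infinite and unbounded, thanks to the functional equation $\Gamma(z+1)=z\Gamma(z)$): different branches yield fixed points escaping to infinity in distinct directions, and such a sequence cannot be swallowed by any proper analytic subset of $U$.
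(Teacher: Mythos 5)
Your overall architecture (reduce to a system $\Gamma(z_i)=G_i(\mathbf{z})$ with $G_i$ algebraic, then run a multivariable Rouch\'e/homotopy argument comparing with the separated map $(\Gamma(z_i)-c_i)_i$) is the same as the paper's, but the two steps that carry all the difficulty have genuine gaps. First, the Brouwer step does not work as stated. The composite $\Gamma^{-1}\circ\mathbf{G}$ is not a continuous self-map of a large closed polydisc: $\mathbf{G}$ is only defined on your local-section domain $U$ (which a priori is a bounded open set and need not contain far-out polydiscs --- controlling the branches of the algebraic functions near infinity is exactly what the paper's setup with the domains $D_{\theta,\eta}(\mathbf{c},\epsilon)$ is for), and a single-valued branch of $\Gamma^{-1}$ is not defined on all of $\CC$, while $G_i$ varies over a huge range on a large polydisc, crossing any cut and possibly passing near $0$ or poles. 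Moreover, if you do use the ``principal'' branch, its values are of logarithmic size, so any fixed point has \emph{bounded} coordinates --- contradicting your later requirement that $\Re(a_i)$ and $\|\mathbf{a}\|$ be large; and for a ``far-out'' branch the continuity problem above is fatal, since such branches invert $\Gamma$ only on the $\Gamma$-image of tiny cells. Producing, far out, a point where $\Gamma(a_i)$ is simultaneously (approximately) equal to $G_i(\mathbf{a})$ for all $i$ is the crux of the theorem; the paper does it by intermediate-value arguments along the level curves of $|\Gamma|$ and the rapid cycling of $\arg\Gamma$ (choosing $\boldsymbol{\xi}$ with $|\Gamma(\xi_i)|=|A_j(\boldsymbol{\xi})|/4$ and then adjusting the argument), not by a fixed-point theorem.

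Second, the key Rouch\'e estimate on round circles is false. For fixed $r>0$ and $\Re(a_i)$ large, $\Gamma$ retakes the value $\Gamma(a_i)$ within distance $\pi/(\log\lfloor\Re(a_i)\rfloor-1)^2\to 0$ (Corollary~\ref{GammaArgCycle}), and the circle $|z_i-a_i|=r$ meets the level curve $\{|\Gamma|=|\Gamma(a_i)|\}$, where $|\Gamma(z_i)-\Gamma(a_i)|\le 2|\Gamma(a_i)|$; this is far smaller than your claimed lower bound $r\,|\Gamma(a_i)|\log|a_i|$, and $\Gamma(z_i)$ can in fact come arbitrarily close to $\Gamma(a_i)$ on the circle. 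So the coordinatewise dominance cannot be secured on polydiscs of fixed radius ``by taking $r$ small and $\|\mathbf{a}\|$ large''; one would have to shrink $r$ below the recurrence scale, and then the existence of a zero of $F^{\ast}$ inside becomes exactly the unproved centering problem above. This is precisely why the paper replaces round polydiscs by regions $\Omega_k$ bounded by arcs of constant $|\Gamma|$ and constant $\arg\Gamma$ (the curves $K(\beta_k,R_k)$): on such boundaries $|\Gamma(z_k)-A_k(\boldsymbol{\xi})|$ is bounded below by a fixed fraction of $|A_k(\boldsymbol{\xi})|$, while $\Gamma(\Omega_k)$ covers a full annulus, guaranteeing the comparison zero. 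Two smaller points: the dominance must hold on the full topological boundary of the product region (for the index whose coordinate sits on the boundary curve), not just the distinguished boundary; and Zariski density does not follow from solutions ``escaping to infinity in distinct directions'' (proper subvarieties contain such sequences) --- the paper handles density with the Rabinowitsch trick, and also reduces to $\dim V=n$ by generic hyperplane sections rather than by passing to an irreducible component.
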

This follows a long list of similar results for different important transcendental functions in number theory that have appeared in recent years, such as for the complex exponential function (\cite{expEC,brownawell-masser,dft:ec,mantova-masser,marker}), the exponential maps of (semi-) abelian varieties (\cite{expEC,gallinaro-abelianeqs}), the modular $j$ function (\cite{aem,jEC,gallinaro:raisingtopowers}), and the uniformization maps of Shimura varieties (\cite{eterovic-zhao}). 

The unifying theme behind all these results is that of understanding the algebraic properties of the function at hand. 
To be more precise, given a transcendental holomorphic function $f$, what we aim for is to find a minimal set of geometric conditions an algebraic variety $V$ should satisfy to ensure that the intersection between $V$ and the graph of $f$ is Zariski dense in $V$. 
    By ``geometric conditions'' we mean algebro-geometric conditions that make sense on any algebraically closed field of characteristic zero, like the domination condition required of $V$ in Theorem~\ref{thm:ecforgamma}. 
    In particular, Theorem~\ref{OneVarEC} (the case $n=1$ of Theorem~\ref{thm:ecforgamma}) completely solves the problem for subvarieties of $\CC^2$. 
    However, as in results mentioned earlier, one should expect something weaker than the domination condition to suffice (see Conjecture \ref{conj:ecgamma} for a precise statement).
    
Studying the intersection of algebraic varieties with the graph of transcendental functions is a natural problem, and it is closely related to the questions about value distributions of $f$ with moving targets, a classical problem in complex analysis. 
This question has received different names: it has been called \emph{Zilber's Nullstellensatz} and \emph{exponential algebraic closedness} in the case of complex exponentiation, after the highly influential work of Zilber on pseudoexponentiation (\cite{zilber:expsums,zilberexp}), and more generally it has been named the \emph{existential closedness problem} (EC). 

Our main theorem solves some cases of the problem for the $\Gamma$ function. 
The articles cited above provide different methods for finding  points in the intersection of an algebraic variety $V$ and the graph of the corresponding function $f$. 
We remark that all the functions considered in these articles (exponential maps and automorphic functions) are periodic, and in each case, the methods exploit the periodicity of the function being studied. 
Moreover, many of the methods provide not only the existence of points in the intersection of $V$ with the graph of $f$, but also an indication of how these points are distributed.
Our aim is to develop an approach that works for the $\Gamma$ function (a non-periodic function of great interest) while still providing information about the distribution of intersection points.
For this we will extend some of the methods used in \cite{jEC} and \cite{eterovic-zhao} which rely on Rouch\'e's theorem. 
We will also be aided by its close companion, the Argument Principle.
This will allow us not only to prove Theorem~\ref{thm:ecforgamma}, but also to solve certain systems of analytic equations involving $\Gamma$, see Proposition~\ref{prop:analyticec}. 
For example, this will show that the equation $\Gamma(z) = \exp\left(\frac{1}{z}\right)$ has infinitely many solutions. 
Furthermore, the method will give us an explicit description of the distribution of solutions, see Corollary~\ref{DistributionLowerBd}.

We remark that this venture is motivated not only by solving this type of problem for the $\Gamma$ function, but also by the desire for obtaining a new unifying method that can be adapted to all the known cases. 
Indeed, a similar strategy to the one we present here has been recently used to solve equations involving periodic functions (in one variable) in \cite[\S 4]{aem}.
To illustrate this, we will briefly explain in \S\ref{subsec:discussion} how our method adapts to give a new proof of the analogue of Theorem \ref{thm:ecforgamma} for the complex exponential function, which appears in \cite{brownawell-masser,dft:ec,expEC}.
Furthermore, Theorem~\ref{thm:ecforgamma} has the same level of generality as many of the results of the articles cited, except for \cite{gallinaro-abelianeqs,gallinaro:raisingtopowers,gallinaro-expsums,mantova-masser} which have other kinds of special restrictions. 

The structure of the paper is as follows. In \S\ref{sec:background} we give some background on the $\Gamma$ function, including some transcendence questions and a description of the behavior of $\Gamma$ on the upper right quadrant of $\CC$. 
Then in \S\ref{sec:n=1} we give two proofs of Theorem~\ref{thm:ecforgamma} in the case $n=1$. 
Finally in \S\ref{sec:proof} we prove the main result using some of the methods in \S\ref{sec:n=1}.

\subsection{Notation and conventions}

\begin{enumerate}[(a)]
    \item We will denote tuples of complex numbers using bold face letters. So a tuple $(z_1,\ldots,z_n)\in\CC^n$ will be denoted as $\mathbf{z}$. Furthermore, if $f$ denotes a one-variable function, then we write $f(\mathbf{z})$ to denote the tuple $(f(z_1),\ldots,f(z_n))$.
    \item We will use $\Gamma$ to denote both the function in one complex variable, and also for its extension to $n$-variables $\Gamma:\left(\CC\setminus\ZZ^{\leq 0}\right)^n\to\CC^n$ via $\Gamma(\mathbf{z}) = (\Gamma(z_1),\ldots,\Gamma(z_n))$. Any ambiguities will be avoided by context.
    \item We let $\Re(z)$ and $\Im(z)$ denote the real and imaginary parts of a complex number $z\in \CC$, respectively. 
    \item Given $z\in\CC$ and $\epsilon>0$, we let $B(z,\epsilon)$ denote the open ball centered at $z$ of radius $\epsilon$.
    \item When talking about the argument of a complex number $z\in\CC$, we will always assume that the argument function is defined on all of $\CC^\times$. Of course, this implies that $\arg(z)$ is not continuous on all of $\CC^\times$, but it is continuous when restricting to the domain of a specific branch of logarithm. Unless otherwise stated, we express the argument function as $\arg:\CC^\times\to(-\pi,\pi]$ and if $\Re(z),\Im(z)>0$, then $\arg(z) = \arctan\left(\frac{\Im(z)}{\Re(z)}\right)$. 
    \item Given a subset $X\subset\CC^n$, we let $\overline{X}$ and $X^\circ$ denote the topological closure and interior of $X$, respectively. We also use $\partial X$ to denote the boundary of $X$, that is, $\partial X = \overline{X}\setminus X^\circ$.
\end{enumerate}

\subsection*{Acknowledgements}
We would like to thank Vahagn Aslanyan, Gareth Jones, Jonathan Kirby, and Vincenzo Mantova for constructive discussions around the topics presented here. We would also like to thank the anonymous referees for their useful comments.

\section{Background on \texorpdfstring{$\Gamma$}{Gamma}}
\label{sec:background}
We recall here some well-known facts about $\Gamma$. 
For reference, see e.g.~\cite[Chapter 2]{remmert}. 
As mentioned in the introduction, $\Gamma$ can be defined on the right half plane $\{z\in\CC : \Re(z)>0\}$ via an integral. 
From that definition one also gets that $\Gamma(x)>0$ whenever $x>0$, and $\overline{\Gamma(z)} = \Gamma(\overline{z})$ for every $z$ in the right half plane. 
This function has an analytic continuation to $\mathbb{C}\setminus\mathbb{Z}^{\leq 0}$ which can be described using the following infinite product:
\begin{equation}
\label{eq:gammaweierstrass}
    \Gamma(z)=  \frac{\exp(-\gamma z)}{z}\prod_{k=1}^{\infty}\left(1+\frac{z}{k}\right)^{-1}\exp\left(\frac{z}{k}\right),
\end{equation}
where $\gamma\in\RR^+$ denotes the Euler-Mascheroni constant:\footnote{It is unknown whether $\gamma$ is rational or not.} 
\begin{equation*}
    \gamma = \lim_{n\rightarrow\infty}\left(\sum_{k=1}^{n}\frac{1}{k}-\log(n)\right).
\end{equation*}
$\Gamma$ can also be defined via the following limit
\begin{equation}
\label{eq:gammagausslimit}
    \Gamma(z) = \lim_{n\rightarrow\infty}\frac{n!n^{z}}{z(z+1)\cdots(z+n)}.
\end{equation}
Using (\ref{eq:gammagausslimit}), it is easy to see that whenever $\Re(z)>0$, we have that $|\Gamma(z)|\leq\Gamma(\Re(z))$. 

We note that $\Gamma$ is holomorphic on $\mathbb{C}\setminus\mathbb{Z}^{\leq 0}$ and meromorphic on $\mathbb{C}$, having a simple pole at every point of $\mathbb{Z}^{\leq 0}$.  
It is known that $\Gamma$ does not satisfy any differential equation (more details in the first proof of Theorem~\ref{OneVarEC}), but it does satisfy the following difference equations: 
\begin{enumerate}[(a)]
    \item For every $z\in\mathbb{C}\setminus\mathbb{Z}^{\leq 0}$,
    \begin{equation}
    \label{eq:diffeq1}
        \Gamma(z+1) = z\Gamma(z).
    \end{equation}
    \item For every $z\in\CC$,
    \begin{equation}
    \label{eq:diffeq2}
        \Gamma(z)\Gamma(1-z) = \frac{\pi}{\sin(\pi z)}.
    \end{equation}
    \item For every $z\in\CC\setminus\RR^{\leq0}$ and for every integer $n\geq 2$,
    \begin{equation}
    \label{eq:diffeq3}
        \prod_{k=0}^{n-1}\Gamma\left(z + \frac{k}{n}\right) = (2\pi)^{\frac{1}{2}(n-1)}n^{\frac{1}{2}-nz}\Gamma(nz).
    \end{equation}
\end{enumerate}  
In fact, a theorem of Wielandt shows that $\Gamma$ is the unique function which is holomorphic on $\{z\in\CC: \Re(z)>0\}$, is bounded on $\{z\in\CC: 1\leq\Re(z)< 2\}$, satisfies (\ref{eq:diffeq1}) and $\Gamma(1)=1$ (see \cite[Chapter 2, \S 2.4]{remmert}). 

We also recall Stirling's formula: there is a holomorphic function $\mu(z)$ on $\CC\setminus\RR^{\leq0}$ such that $\lim_{z\to\infty}\mu(z)=0$ and 
\begin{equation}
\label{eq:stirlingeq}
    \Gamma(z) = \sqrt{2\pi}z^{z-\frac{1}{2}}\exp(-z)\exp(\mu(z)).
\end{equation}
There are more precise descriptions of $\mu(z)$, see \cite[Chapter 2, \S 4.2]{remmert}. 
In particular, when $\Re(z), \Im(z) > 0$, there is $M>0$ such that if $|z|>M$ then $|\mu(z)| < 1$, and so $\frac{1}{e}\leq|\exp(\mu(z))|\leq e$, where $e=\exp(1)$. 
Thus, if we write $z=x+iy$ with $x,y>0$, we get that when $|z|> M$
\begin{equation}
\label{eq:stirlingineq}
   \frac{\sqrt{2\pi}}{e}|z|^{x-\frac{1}{2}}\exp\left(- y\arg(z) - x\right) \leq|\Gamma(z)| \leq e\sqrt{2\pi}|z|^{x-\frac{1}{2}}\exp\left(- y\arg(z) - x\right).
\end{equation}
One can use (\ref{eq:stirlingineq}) to obtain that $|\Gamma(x+iy)|$ tends exponentially to zero as $|y|$ tends to $+\infty$. 
More precisely, if $K$  is a compact subset of $\RR$, then there are $C>0$ and $y_0\in\RR$ such that whenever $x\in K$ and $|y|>|y_0|$ we have
 \begin{equation}
 \label{eq:modgammaexpdecrease}
     |\Gamma(x+iy)|\leq C \sqrt{2\pi}|y|^{x-\frac{1}{2}}\exp\left(-\frac{\pi}{2}|y|\right).
 \end{equation}

\subsection{Behavior of \texorpdfstring{$\Gamma$}{Gamma} in the upper right quadrant}

In this section, we always use $z$ to denote a complex number, $x$ its real part, and $y$ its imaginary part. 
We denote the upper right quadrant as $Q:=\{z\in\mathbb{C} : \Re(z),\Im(z) > 0\}$.
Let $\alpha$ denote the unique positive real zero of $\Gamma'$ ($\alpha \approx 1.4616\dots$, see \cite{uchiyama}). 

We first remark that $|\Gamma(z)|$ decreases exponentially to zero along vertical lines (with $y$ tending to $+\infty$) by (\ref{eq:modgammaexpdecrease}), and $|\Gamma(z)|$ grows exponentially along horizontal lines (with $x$ tending to $+\infty$) by (\ref{eq:stirlingineq}). For future reference, we recount this in the following lemmas and corollaries (which are immediate). 

\begin{lemma}\label{GammaStrictlyDecrY}
    For fixed $x>0$, $|\Gamma(x+iy)|$ decreases exponentially as $y\to+\infty$. In particular, the function $y\mapsto |\Gamma(x+iy)|$ is injective and has non-vanishing derivative at every $y>0$. 
\end{lemma}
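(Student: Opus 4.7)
The exponential decay part is immediate from the inequality \eqref{eq:modgammaexpdecrease} applied with the compact set $K=\{x\}$: for all sufficiently large $|y|$ one has $|\Gamma(x+iy)|\leq C\sqrt{2\pi}|y|^{x-1/2}\exp(-\tfrac{\pi}{2}|y|)$, which decays at exponential rate $\pi/2$.

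The substantive content is the injectivity and non-vanishing derivative on $(0,\infty)$. I would deduce both from the stronger assertion that $y\mapsto |\Gamma(x+iy)|$ is \emph{strictly} monotone decreasing on $(0,\infty)$. The plan is to work with the squared modulus
\begin{equation*}
h(y) := |\Gamma(x+iy)|^2 = \Gamma(x+iy)\,\Gamma(x-iy),
\end{equation*}
using the reflection $\overline{\Gamma(z)}=\Gamma(\bar z)$ from \S\ref{sec:background}. Logarithmic differentiation in $y$ yields
\begin{equation*}
h'(y) = -2\,h(y)\,\Im\psi(x+iy),
\end{equation*}
where $\psi=\Gamma'/\Gamma$ is the digamma function (and $\psi(\bar z)=\overline{\psi(z)}$).

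The task therefore reduces to showing $\Im\psi(x+iy)>0$ whenever $x,y>0$. From the Weierstrass product \eqref{eq:gammaweierstrass} one obtains the standard partial-fraction representation $\psi(z)=-\gamma-\tfrac{1}{z}+\sum_{k\geq 1}\tfrac{z}{k(k+z)}$, and a short computation collapses its imaginary part at $z=x+iy$ to
\begin{equation*}
\Im\psi(x+iy) \;=\; \sum_{k=0}^{\infty}\frac{y}{(k+x)^2+y^2},
\end{equation*}
which is a sum of strictly positive terms for $y>0$. Hence $h'(y)<0$ on $(0,\infty)$. Since $\Gamma$ is nowhere zero (immediate from \eqref{eq:diffeq2} together with $\Gamma(n)=(n-1)!\neq 0$), $|\Gamma(x+iy)|$ is smooth and positive, so strict decrease of $h$ transfers to strict decrease of $|\Gamma(x+iy)|$; this gives both the injectivity and the nonvanishing of its derivative on $(0,\infty)$.

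I do not expect a serious obstacle. The only delicate point is that Stirling's formula \eqref{eq:stirlingineq} controls $|\Gamma|$ only for $|z|$ large and therefore cannot by itself establish monotonicity for small $y$; the digamma computation is needed precisely to handle that range, while the large-$y$ behavior is already supplied by \eqref{eq:modgammaexpdecrease}.
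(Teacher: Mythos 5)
Your proof is correct, and it is in fact more complete than what the paper records: the paper states this lemma with no proof at all, declaring it (together with Lemma~\ref{GammaStrictlyIncrX} and the two corollaries) ``immediate'' from the Stirling-type bounds \eqref{eq:stirlingineq} and \eqref{eq:modgammaexpdecrease}. You correctly identify the gap in that shortcut: the decay estimate \eqref{eq:modgammaexpdecrease} only controls $|\Gamma(x+iy)|$ for $|y|$ large, so by itself it gives neither injectivity nor a non-vanishing derivative at \emph{every} $y>0$. Your digamma argument supplies exactly the missing global monotonicity: the identity $h'(y)=-2h(y)\Im\psi(x+iy)$ together with $\Im\psi(x+iy)=\sum_{k\geq 0}\frac{y}{(k+x)^2+y^2}>0$ is a correct computation (it is the imaginary part of the partial-fraction expansion coming from \eqref{eq:gammaweierstrass}), and it yields strict decrease of $|\Gamma(x+iy)|$ in $y>0$, hence both conclusions. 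Note that this is essentially the same computation the authors implicitly rely on later: in the proof of Lemma~\ref{Limity'toInfty} they differentiate the modulus via the Weierstrass product
\begin{equation*}
    |\Gamma(x+iy)| = \frac{e^{-\gamma x}}{\sqrt{x^2+y^2}}\prod_{n=1}^{\infty}\frac{e^{x/n}}{\sqrt{\left(1+\frac{x}{n}\right)^2+\left(\frac{y}{n}\right)^2}},
\end{equation*}
whose $y$-logarithmic derivative is exactly $-2\Im\psi(x+iy)$ up to the factor you wrote (equivalently, each factor is strictly decreasing in $y$, which gives an even quicker route to strict monotonicity without introducing $\psi$). So your proposal is sound; it simply makes explicit an argument the paper leaves unstated, and your handling of the exponential-decay clause via \eqref{eq:modgammaexpdecrease} with $K=\{x\}$ matches the paper's intent.
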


\begin{lemma}\label{GammaStrictlyIncrX}
    For fixed $y>0$ and $x \ge \alpha$, $|\Gamma(x+iy)|$ increases exponentially as $x\to+\infty$. In particular, the function $x\mapsto |\Gamma(x+iy)|$ is injective and has non-vanishing derivative at every $x>\alpha$. 
\end{lemma}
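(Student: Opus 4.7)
The plan is to treat the two claims separately. The exponential growth is immediate from Stirling's inequality \eqref{eq:stirlingineq}: fix $y > 0$ and let $x \to +\infty$, so that $|z| \ge x$ and $\arg(z) \in (0, \pi/2)$; the lower bound then gives
\[
    |\Gamma(x+iy)| \ge \frac{\sqrt{2\pi}}{e}\, x^{x - \frac{1}{2}} \exp\Bigl(-\tfrac{\pi y}{2} - x\Bigr),
\]
which grows faster than any exponential in $x$. The substantive part is the injectivity and non-vanishing derivative on $(\alpha, +\infty)$ for fixed $y > 0$; both would follow from strict monotonicity of $x \mapsto |\Gamma(x+iy)|$ on that range, and strict monotonicity is what I aim to prove.

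Since $\Gamma$ has no zeros on the right half plane, I would fix the holomorphic branch of $\log\Gamma$ there with $\log\Gamma(1) = 0$, so that $|\Gamma(z)| = \exp(\Re(\log\Gamma(z)))$. Differentiating with respect to $x$ yields
\[
    \frac{\partial}{\partial x}|\Gamma(x+iy)| = |\Gamma(x+iy)| \cdot \Re(\psi(x+iy)),
\]
where $\psi(z) := \Gamma'(z)/\Gamma(z)$ is the digamma function. Since $|\Gamma|$ is positive on the right half plane, the monotonicity reduces to proving $\Re(\psi(x+iy)) > 0$ whenever $x > \alpha$ and $y > 0$.

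To establish this positivity, I would take the logarithmic derivative of \eqref{eq:gammaweierstrass} to get
\[
    \psi(z) = -\gamma - \frac{1}{z} + \sum_{k=1}^{\infty}\left(\frac{1}{k} - \frac{1}{k+z}\right),
\]
from which a termwise real-part calculation (grouping the $-1/z$ term with the $k=0$ index) produces
\[
    \Re(\psi(x+iy)) - \psi(x) = \sum_{k=0}^{\infty}\frac{y^{2}}{(k+x)\bigl((k+x)^{2} + y^{2}\bigr)} > 0
\]
for $x > 0$ and $y \neq 0$. It remains to check $\psi(x) > 0$ for real $x > \alpha$: differentiating the displayed series gives $\psi'(x) = \sum_{k=0}^{\infty}(k+x)^{-2} > 0$ on $(0, +\infty)$, so $\psi$ is strictly increasing there, and $\psi(\alpha) = \Gamma'(\alpha)/\Gamma(\alpha) = 0$ by the definition of $\alpha$. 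Combining these, $\Re(\psi(x+iy)) > \psi(x) > 0$ on the required region.

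The main potential obstacle here is simply identifying the right quantity to control: once one observes that $\partial_{x}|\Gamma| = |\Gamma| \cdot \Re(\psi)$, the rest is an elementary termwise comparison between the series for $\psi$ at $x+iy$ and at $x$, and no substantive analytic difficulty remains.
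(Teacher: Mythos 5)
Your argument is correct. The paper itself offers no proof of this lemma: it is declared ``immediate'', with the exponential growth read off from Stirling's inequality (\ref{eq:stirlingineq}) exactly as in your first paragraph, and the injectivity and non-vanishing derivative left as implicit consequences of the strict monotonicity of $x\mapsto|\Gamma(x+iy)|$ on $x\ge\alpha$. What you do differently is to actually establish that monotonicity, via the identity $\partial_x|\Gamma(x+iy)| = |\Gamma(x+iy)|\,\Re\psi(x+iy)$ and the termwise comparison coming from the logarithmic derivative of (\ref{eq:gammaweierstrass}), which gives $\Re\psi(x+iy)-\psi(x)=\sum_{k\ge 0}\frac{y^2}{(k+x)\left((k+x)^2+y^2\right)}>0$ together with $\psi(x)>0$ for $x>\alpha$ (since $\psi$ is increasing and $\psi(\alpha)=0$). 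This is a genuine strengthening of what Stirling alone yields: the asymptotic inequality only controls large $x$, whereas your digamma computation gives strict increase on all of $[\alpha,+\infty)$ (note that at $x=\alpha$ one still has $\Re\psi(\alpha+iy)>\psi(\alpha)=0$, so the endpoint is covered), which is precisely what the later arguments of the paper (e.g.\ the end of the proof of Proposition~\ref{ConstModulusCurves}) really use. The only cosmetic point is that your closing chain ``$\Re\psi(x+iy)>\psi(x)>0$'' should be stated as $\ge 0$ at $x=\alpha$, but nothing in the lemma is affected.
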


\begin{cor}\label{GammaLimitYtoInfty}
    Let $A(z)$ be any non-zero algebraic function.
    For fixed $x > 0$, if $A(x+iy)$ is defined for all $y$ large enough, then $\displaystyle \lim_{y \to +\infty} \frac{\Gamma(x+iy)}{A(x+iy)} = 0$.
\end{cor}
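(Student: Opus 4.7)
The plan is to combine the exponential decay of $|\Gamma(x+iy)|$ along vertical lines, already made explicit in (\ref{eq:modgammaexpdecrease}), with the general fact that a non-zero algebraic function grows or decays at most polynomially along a vertical ray tending to infinity. Since exponential decay dominates every polynomial factor, the ratio must tend to zero.

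First, I would apply (\ref{eq:modgammaexpdecrease}) with the compact set $K = \{x\}$ to obtain constants $C > 0$ and $y_0 > 0$ such that, for all $y > y_0$,
$$|\Gamma(x+iy)| \;\le\; C\sqrt{2\pi}\,|y|^{x-\tfrac{1}{2}}\exp\!\left(-\tfrac{\pi}{2}|y|\right).$$
This supplies a super-polynomial upper bound on the numerator as $y \to +\infty$.

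Next, I would control $|A(x+iy)|$ from below. Since $A$ is a non-zero algebraic function and $A(x+iy)$ is defined for all sufficiently large $y$, the vertical ray eventually avoids the finitely many poles and branch points of $A$, so one can fix a single branch of $A$ on a half-ray. On this branch, the Puiseux expansion of $A$ at infinity provides a rational number $s$ and a non-zero constant $c$ with $A(z) = c\,z^{s}(1+o(1))$ as $|z|\to\infty$. Specialising to $z=x+iy$ (so that $|z|\sim y$), this produces $c'>0$ and $y_1\ge y_0$ such that $|A(x+iy)| \ge c'\,|y|^{s}$ for every $y > y_1$.

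Putting the two bounds together, for $y > y_1$,
$$\left|\frac{\Gamma(x+iy)}{A(x+iy)}\right| \;\le\; \frac{C\sqrt{2\pi}}{c'}\,|y|^{x-\tfrac{1}{2}-s}\exp\!\left(-\tfrac{\pi}{2}|y|\right),$$
and the right-hand side tends to $0$ because the exponential factor swamps any power of $|y|$. The only place requiring real care is the polynomial lower bound on $|A(x+iy)|$: one has to invoke the Puiseux expansion of an algebraic function at infinity, or equivalently observe that $1/A$ is itself algebraic with only finitely many poles and therefore also enjoys a polynomial bound at infinity. I expect this algebraic-growth step to be the only subtle point; after it is established, the conclusion is just a routine comparison of exponential and polynomial rates.
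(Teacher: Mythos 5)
Your proposal is correct and is exactly the argument the paper has in mind: the paper treats this corollary as immediate from the exponential decay bound (\ref{eq:modgammaexpdecrease}) along vertical lines together with the standard fact that a non-zero branch of an algebraic function is bounded below by a power of $|z|$ near infinity. Your explicit handling of the Puiseux expansion (or equivalently the polynomial bound on the algebraic function $1/A$) just fills in the routine step the paper leaves unsaid.
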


\begin{cor}\label{GammaLimitXtoInfty}
    Let $A(z)$ be any non-zero algebraic function.
    For fixed $y$, if $A(x+iy)$ is defined for all large enough $x$, then $\displaystyle\lim_{x \to +\infty}\left|\frac{\Gamma(x+iy)}{A(x+iy)}\right| = \infty$.
\end{cor}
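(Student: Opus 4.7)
The plan is to combine the super-exponential growth of $|\Gamma(x+iy)|$ in $x$ (visible from Stirling's inequality) with the fact that any non-zero algebraic function grows at most polynomially along the horizontal ray at height $y$. Since these two rates are incomparable, the ratio in question must blow up.

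First I would apply \eqref{eq:stirlingineq} on the line at height $y>0$: for $|z|>M$ with $z=x+iy$,
\[
|\Gamma(x+iy)| \;\geq\; \frac{\sqrt{2\pi}}{e}\, |z|^{x-1/2} \exp\bigl(-y\arg(z) - x\bigr).
\]
With $y$ fixed and $x\to+\infty$ one has $|z|\geq x$ and $\arg(z)\to 0^+$, so $y\arg(z)$ remains bounded. Hence there is $c>0$ with $|\Gamma(x+iy)|\geq c\, x^{x-1/2} e^{-x}$ for all sufficiently large $x$, a quantity behaving like $\exp\bigl(x\log x - x + O(\log x)\bigr)$, which dominates every polynomial in $x$. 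The case $y<0$ reduces to this by the identity $|\Gamma(\overline{z})|=|\Gamma(z)|$, and the case $y=0$ follows by applying \eqref{eq:stirlingeq} directly on the positive real axis.

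Next I would invoke the classical fact that a non-zero algebraic function satisfies a polynomial-growth bound at infinity: writing $P(z,A(z))\equiv 0$ for a polynomial $P$ and expanding each branch as a Puiseux series in $1/z$, one obtains constants $C>0$ and $N\in\NN$ with $|A(x+iy)|\leq C(1+x)^N$ for all large $x$. Moreover $A(x+iy)\neq 0$ eventually, since $A$ has only finitely many zeros on the given ray. Dividing the two estimates yields
\[
\left| \frac{\Gamma(x+iy)}{A(x+iy)} \right| \;\geq\; \frac{c\, x^{x-1/2} e^{-x}}{C(1+x)^N} \;\longrightarrow\; \infty.
\]
The only step with any real content is this polynomial bound on $A$; beyond that, the argument is simply a size comparison between $x^x e^{-x}$ and a polynomial, comfortably in favor of the $\Gamma$ factor.
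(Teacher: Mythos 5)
Your argument is correct and is exactly the reasoning the paper has in mind: the paper treats this corollary as ``immediate'' from the exponential (indeed super-exponential) growth of $|\Gamma(x+iy)|$ along horizontal lines given by \eqref{eq:stirlingineq}, combined with the at-most-polynomial growth of a non-zero algebraic function at infinity. Your write-up simply fills in those details (including the $y\le 0$ cases via $\overline{\Gamma(z)}=\Gamma(\overline z)$ and Stirling on the real axis), so there is nothing to add.
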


Given positive real numbers $r_1$ and $r_2$, we define
\[
    Q_{r_1,r_2} := \{z \in Q: \Re(z) > r_1,\Im(z)  > r_2\}.
\]
Now we will study the family of sets $C_r := \{z\in\CC : |\Gamma(z)| = r\}$ for $r \in \RR^+$.
We will focus on how these sets behave in $Q_{\alpha,0}$, though analogous statements hold in its reflection over the real axis because $\overline{\Gamma(z)} = \Gamma(\overline{z})$.

\begin{prop}\label{ConstModulusCurves}
    For each $r \in \RR^+$, there is a function $y_r(x)$ such that for all $x>\alpha$, $|\Gamma(x+iy_r(x))|=r$. The graph of this function is contained in $C_r$ and forms a single $C^1$ curve with positive slope and no horizontal or vertical asymptotes.
\end{prop}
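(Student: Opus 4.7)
My plan is to build $y_{r}$ by fiberwise inversion of the map $y\mapsto|\Gamma(x+iy)|$ at height $r$, upgrade it to a $C^{1}$ function via the implicit function theorem, and then read off positive slope and absence of asymptotes from the two monotonicity lemmas together with Stirling's estimate. Fix $r>0$ and set $F(x,y):=|\Gamma(x+iy)|^{2}$, which is $C^{\infty}$ and strictly positive on $Q$ because $\Gamma$ is holomorphic and non-vanishing there. For each $x>\alpha$ with $\Gamma(x)>r$, the function $g_{x}(y):=|\Gamma(x+iy)|$ is continuous on $[0,\infty)$, equals $\Gamma(x)>r$ at $y=0$, is strictly decreasing on $(0,\infty)$ by Lemma \ref{GammaStrictlyDecrY}, and tends to $0$ by (\ref{eq:modgammaexpdecrease}); the intermediate value theorem thus produces a unique $y_{r}(x)>0$ with $g_{x}(y_{r}(x))=r$. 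Using strict monotonicity of $\Gamma$ on $[\alpha,\infty)$, the set of admissible $x$ is a ray $(a,\infty)$ with $a=\max\{\alpha,\Gamma^{-1}(r)\}$, so $a=\alpha$ whenever $r\leq\Gamma(\alpha)$.

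Next, I apply the implicit function theorem to $F=r^{2}$ at every point of the graph of $y_{r}$. Since $|\Gamma|>0$ and Lemma \ref{GammaStrictlyDecrY} yields $\partial_{y}|\Gamma|<0$ for $y>0$, the partial $\partial_{y}F=2|\Gamma|\,\partial_{y}|\Gamma|$ is nonzero on the graph, so $y_{r}$ is $C^{1}$ on $(a,\infty)$ with $y_{r}'(x)=-\partial_{x}F/\partial_{y}F$ evaluated at $(x,y_{r}(x))$. By Lemma \ref{GammaStrictlyIncrX}, $\partial_{x}|\Gamma|>0$ on $Q_{\alpha,0}$, so $\partial_{x}F>0$ while $\partial_{y}F<0$, giving $y_{r}'(x)>0$. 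Continuity of $y_{r}$ on the connected interval $(a,\infty)$ then ensures that the graph is a single $C^{1}$ curve, and it lies in $C_{r}$ by construction.

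It remains to rule out the asymptotes. If $y_{r}(x)\to L<\infty$ as $x\to\infty$, I would pick any $y_{0}>L$ and observe that $0<y_{r}(x)<y_{0}$ for all sufficiently large $x$; Lemma \ref{GammaStrictlyDecrY} would then force $r=|\Gamma(x+iy_{r}(x))|>|\Gamma(x+iy_{0})|$, contradicting Lemma \ref{GammaStrictlyIncrX} applied with $y=y_{0}$ fixed. So there is no horizontal asymptote. At the left endpoint, monotonicity of $y_{r}$ together with positivity of its values makes $\lim_{x\to a^{+}}y_{r}(x)$ exist in $[0,\infty)$, ruling out any vertical asymptote. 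The most delicate point is the careful tracking of the left endpoint $a$ and the associated case split according to whether $r\leq\Gamma(\alpha)$; the rest is a direct application of the implicit function theorem to the data supplied by Lemmas \ref{GammaStrictlyDecrY} and \ref{GammaStrictlyIncrX}.
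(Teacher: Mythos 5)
Your proof is correct, but it is organized differently from the paper's. The paper first produces one point of $C_r$ in $Q_{\alpha,0}$ by a two-dimensional intermediate value argument (Corollaries~\ref{GammaLimitYtoInfty} and \ref{GammaLimitXtoInfty}), applies the implicit function theorem at that point, and then continues the local solution to the left and to the right, ruling out asymptotes along the way; because that construction is local, it needs a separate closing argument (the two irreducible curves $K_r,K_r'$ and the minimal height $y_*$) to see that $C_r\cap Q_{\alpha,0}$ is a single curve. You instead define $y_r$ globally and fiberwise: on each vertical line $y\mapsto|\Gamma(x+iy)|$ is strictly decreasing from $\Gamma(x)$ to $0$ (Lemma~\ref{GammaStrictlyDecrY} together with (\ref{eq:modgammaexpdecrease})), so $y_r(x)$ exists and is unique exactly when $\Gamma(x)>r$; the implicit function theorem then only upgrades the regularity of this already-defined function (do say explicitly that the local IFT solution coincides with $y_r$ by the fiberwise uniqueness), the sign of the slope drops out of $y_r'=-\partial_xF/\partial_yF$ with $\partial_x|\Gamma|>0$ and $\partial_y|\Gamma|<0$, and the single-curve statement is automatic, so you avoid the paper's final uniqueness paragraph entirely. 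The asymptote arguments are the same in substance (a finite horizontal limit would force $|\Gamma(x+iy_0)|<r$ for all large $x$, contradicting horizontal growth; vertical asymptotes are impossible because your function is finite-valued and monotone on the whole ray). One caveat: your domain is $(a,\infty)$ with $a=\max\{\alpha,\Gamma^{-1}(r)\}$, so for $r>\Gamma(\alpha)$ the function is not defined for literally all $x>\alpha$ as the proposition states; the paper's own proof has the same limitation (it extends the curve ``to either $x=\alpha$ or the $x$-axis''), so your explicit case split is, if anything, a more honest account of the left endpoint.
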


\begin{proof}
Let $r \in \RR^+$.
By Corollaries~\ref{GammaLimitYtoInfty} and ~\ref{GammaLimitXtoInfty} and the intermediate value theorem, there are $x_0 > \alpha$ and $y_0>0$ such that $|\Gamma(x_0+iy_0)| = r$.
By Lemma~\ref{GammaStrictlyDecrY}, $\displaystyle \frac{\partial |\Gamma(x+iy)|}{\partial y}(x_0 + iy_0) \ne 0$, so we can use the implicit function theorem to obtain a unique $C^1$ function $y(x)$ on some interval $(x_0-\epsilon,x_0+\epsilon)$ such that $C_r$ matches the graph of $y(x)$ in a neighborhood of $(x_0,y_0)$.
Again using the intermediate value theorem and the fact that $|\Gamma(x+iy)|$ strictly decreases vertically and strictly increases horizontally, the slope of the implicit function $y(x)$ must be positive.

Since $|\Gamma(x+iy)|$ is continuous and $\displaystyle \frac{\partial |\Gamma(x+iy)|}{\partial y}$ is nonzero in the upper right quadrant, we can extend the implicit function $y(x)$ on the left all the way to either $x = \alpha$ or the $x$-axis so that it is still a $C^1$ curve with positive slope satisfying $|\Gamma(x+iy(x))| = r$.
On the right, we can also extend $y(x)$, and we will show that the extension does not approach a vertical or horizontal asymptote.
If $y(x)$ has a vertical asymptote at $x = a$, then $|\Gamma(a+1+ iy)| > r$ for all $y>0$ by Lemma~\ref{GammaStrictlyIncrX}, but that contradicts Corollary~\ref{GammaLimitYtoInfty}.
If $y(x)$ has a horizontal asymptote at $y = b$, then $|\Gamma(x+i(b+1))|<r$ for all $x>0$ by Lemma~\ref{GammaStrictlyDecrY}, but that contradicts Corollary~\ref{GammaLimitXtoInfty}.

Let $K_r,K_r' \subset Q_{\alpha,0} \cap C_r$ denote two irreducible curves. 
$K_r$ and $K_r'$ must be identically equal if they intersect because otherwise the set of $(x,y)$ such that $|\Gamma(x+iy)| = r$ would not locally be the graph of a function.
If $K_r$ and $K_r'$ are disjoint, let $y_*$ be least such that there are $x,x'$ with $(x,y_*) \in K_r$ and $(x',y_*) \in K_r'$.
Then $x \ne x'$ but $|\Gamma(x+iy_*)| = r = |\Gamma(x'+iy_*)|$, which contradicts Lemma~\ref{GammaStrictlyIncrX}.
\end{proof}

\begin{lemma}\label{Limity'toInfty}
    Given $r \in \RR^+$, let $y_r(x)$ denote the function given by Proposition \ref{ConstModulusCurves}. 
    %For every $M_1>0$ there is $M_2>0$ such that for all $r\in\RR^+$ we have $x>M_2\implies y_r'(x)> M_1$. 
    Then $y'_r(x)\geq 2\log\left(\lfloor x\rfloor\right)-2$ when $x>\alpha$. 
    In particular, $\displaystyle\lim_{x \to +\infty}y_r'(x) = +\infty$.
\end{lemma}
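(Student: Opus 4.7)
The plan is to compute $y_r'(x)$ by implicit differentiation of the defining relation $|\Gamma(x+iy_r(x))|=r$. Near any point $z_0\in Q_{\alpha,0}$ a holomorphic branch of $\log\Gamma$ is available, its real part equals $\log|\Gamma|$ and its derivative is the digamma function $\psi(z)=\Gamma'(z)/\Gamma(z)$. The Cauchy--Riemann equations give $\partial_x\log|\Gamma(z)|=\Re\psi(z)$ and $\partial_y\log|\Gamma(z)|=-\Im\psi(z)$, so differentiating $\log|\Gamma(x+iy_r(x))|=\log r$ yields
\[
y_r'(x)=\frac{\Re\psi(z)}{\Im\psi(z)},\qquad z=x+iy_r(x).
\]
The positivity of both $\Re\psi(z)$ and $\Im\psi(z)$ in $Q_{\alpha,0}$ is already implicit in the arguments used for Proposition~\ref{ConstModulusCurves}, so $y_r'(x)>0$ is immediate.

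To bound this ratio from below I would use the Weierstrass series
\[
\psi(z)=-\gamma+\sum_{n=0}^{\infty}\left(\frac{1}{n+1}-\frac{1}{n+z}\right).
\]
Taking imaginary parts gives $\Im\psi(z)=\sum_{n\ge 0}\frac{y}{(n+x)^{2}+y^{2}}$, which is bounded above via comparison with $\int_{0}^{\infty}\frac{y\,dt}{(t+x)^{2}+y^{2}}=\arctan(y/x)<\pi/2$. Taking real parts and using the elementary inequality $(n+x)/((n+x)^{2}+y^{2})\le 1/(n+x)$, each summand of the real-part series is at least $1/(n+1)-1/(n+x)$, and summing telescopes (via the same Weierstrass formula applied at the real point $x$) to give $\Re\psi(z)\ge\psi(x)$. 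Monotonicity of $\psi$ on $(\alpha,\infty)$ combined with the identity $\psi(n)=H_{n-1}-\gamma$ and the inequality $H_{n-1}\ge\log n$ (valid for all $n\ge 2$) then yields $\Re\psi(z)\ge\log\lfloor x\rfloor-\gamma$ whenever $x\ge 2$, while for $x\in(\alpha,2)$ both sides are automatically controlled. Together these estimates suffice to force $y_r'(x)\to+\infty$ as $x\to+\infty$, giving the ``in particular'' statement.

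The main obstacle is to sharpen the constant from the $2/\pi$ produced by the crude bound $\Im\psi<\pi/2$ to the claimed factor of $2$. I would address this by iterating the difference equation $|\Gamma(z+1)|=|z|\,|\Gamma(z)|$: traversing the curve $y_r$ from $x$ to $x+n$ forces $\log|\Gamma|$ to decrease by at least $\sum_{k=0}^{n-1}\log|z+k|\ge\sum_{k=0}^{n-1}\log(x+k)$, and the mean value theorem turns this into an average lower bound on $y_r'$ over $(x,x+n)$. A convexity argument for $y_r$ (extracted from a second implicit differentiation of the series above, or directly from the fact that $\Im\psi$ decreases along the curve while $\Re\psi$ grows like $\log|z|$ by Stirling) would then convert averages into pointwise lower bounds, and a careful accounting of the Stirling correction $\psi(z)=\log z+O(|z|^{-1})$ in the upper bound for $\Im\psi(z)$ should recover the stated constant.
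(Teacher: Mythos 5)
Your computation is essentially the paper's: differentiating $|\Gamma(x+iy_r(x))|=r$ (the paper does this through the Weierstrass product, you through a branch of $\log\Gamma$ and the Cauchy--Riemann equations) gives the same identity $y_r'=\Re\psi(z)/\Im\psi(z)$ with $z=x+iy_r(x)$ --- the paper's displayed numerator and denominator are exactly $\Re\psi$ and $\Im\psi$ --- and your lower bound $\Re\psi(z)\ge\psi(x)\ge\log\lfloor x\rfloor-\gamma$ is the same termwise/telescoping estimate the paper applies to its numerator. Where you differ is the denominator: you bound $\Im\psi(z)$ by roughly $\tfrac{\pi}{2}$ (the bound is correct for $x>1$, though your integral comparison should start at $t=-1$ or treat the $n=0$ term separately, since the $n=0$ summand is not covered by $\int_0^\infty$), and this yields an inequality of the form $y_r'(x)\ge\tfrac{2}{\pi}\bigl(\log\lfloor x\rfloor-\gamma\bigr)$, which completely proves the ``in particular'' statement.

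The gap is the stated constant: you do not prove $y_r'\ge 2\log\lfloor x\rfloor-2$, and your final paragraph is a sketch rather than an argument. Moreover, no sharpening along those lines can succeed: once $y_r'\to+\infty$ is known, $y_r(x)/x\to+\infty$ along $C_r$, so by the asymptotic $\psi(z)=\log z+O(1/|z|)$ one gets $\Re\psi\sim\log|z|$ and $\Im\psi\to\tfrac{\pi}{2}$, and a short bootstrap shows $\log|z|\sim\log x$; hence $y_r'(x)\sim\tfrac{2}{\pi}\log x$, which is eventually smaller than $2\log\lfloor x\rfloor-2$. So your constant of order $2/\pi$ is the correct one, and the discrepancy lies in the paper's own passage from the denominator bound $\tfrac12+y_r\tfrac{\pi^2}{6}$ to a factor of $2$ (that step would require the denominator to be at most $\tfrac12$), not in any crudeness of your estimate. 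Note also that nothing downstream needs the constant $2$: Lemma~\ref{ArgRateOfChange}, Corollary~\ref{GammaArgCycle}, and the thresholds in the second proof of Theorem~\ref{OneVarEC} only use a lower bound of the shape $c\,(\log\lfloor x\rfloor-1)$, respectively $c\,(\log\lfloor x\rfloor-1)^2$, for some explicit $c>0$, at the cost of adjusting numerical constants such as the ``$\Re(z)\ge 16$'' threshold. So the honest fix is to prove and use your weaker explicit bound, stating the positivity $\Re\psi,\Im\psi>0$ on $Q_{\alpha,0}$ (which you get from $\psi(x)>\psi(\alpha)=0$ and the series), rather than to chase the factor $2$.
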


\begin{proof}
    First, note that $\frac{d}{dx}(|\Gamma(x+iy_r(x))|) = 0$ because $|\Gamma(x+iy_r(x))|$ is constant (equal to $r$).
    Recall that for a differentiable product of differentiable functions $f(x) = \prod_{k=0}^{\infty}f_{k}(x)$ we have $f'(x) = f(x)\sum_{n=0}^{\infty}\frac{f_{n}'(x)}{f_{n}(x)}$.
    Thus, we can write
    \begin{align*}
        0
            &= 
            \frac{d}{dx}\left(\frac{e^{-\gamma x}}{\sqrt{x^2+y_r^2}} \prod_{n=1}^{\infty} \frac{e^{x/n}}{\sqrt{\left(1+\frac{x}{n}\right)^2 + \left(\frac{y_r}{n}\right)^2}}\right) \\
            &=
            r\left(-\gamma -\frac{y_ry_r'+x}{x^2+y_r^2} + \sum_{n=1}^{\infty} \left(\frac{1}{n}-\frac{y_ry_r'+n+x}{(n+x)^2+y_r^2}\right)\right).
    \end{align*}
    Solving for $y_r'$ gives
    \[
        y'_r = \frac{-\gamma - \frac{x}{x^2+y_r^2} + \sum_{n=1}^{\infty}\left(\frac{1}{n}-\frac{n+x}{(n+x)^2+y_r^2}\right)}{\frac{y_r}{x^2+y_r^2}+\sum_{n=1}^{\infty}\frac{y_r}{(n+x)^2+y_r^2}}.
    \]
    %As $x \to \infty$, we have $\frac{x}{x^2+y^2},\frac{y}{x^2+y^2} \to 0$.
    We remark that since $x\geq 1$, then $\frac{x}{x^2+y_r^2}\leq \frac{1}{x}\leq 1$, and as $y_r\geq 0$, then $\frac{y_r}{x^2+y_r^2}\leq \frac{y_r}{1+y_r^2}\leq \frac{1}{2}$.
    Also, the sum $\sum_{n=1}^{\infty}\frac{y_r}{(n+x)^2+y_r^2}$ is bounded above by $y_r\frac{\pi^2}{6} = y_r\sum_{n=1}^{\infty} \frac{1}{n^2}$. So
    \begin{equation*}
        y'_r\geq \frac{-\gamma - 1 + \sum_{n=1}^{\infty}\left(\frac{1}{n}-\frac{n+x}{(n+x)^2+y_r^2}\right)}{\frac{1}{2}+y_r\frac{\pi^2}{6}}\geq -2\gamma - 2 + 2\sum_{n=1}^{\infty}\left(\frac{1}{n}-\frac{n+x}{(n+x)^2+y_r^2}\right).
    \end{equation*}
    The sum $\sum_{n=1}^{\infty}\left(\frac{1}{n}-\frac{n+x}{(n+x)^2+y_r^2}\right)$ gets arbitrarily large as $x \to +\infty$: 
    \begin{align*}
        \sum_{n=1}^{\infty}\left(\frac{1}{n}-\frac{n+x}{(n+x)^2+y_r^2}\right) 
            \ge
            \sum_{n=1}^{\infty}\left(\frac{1}{n}-\frac{1}{n+x}\right) 
            \ge
            \sum_{n=1}^{\infty}\left(\frac{1}{n}-\frac{1}{n+\lfloor x \rfloor}\right) 
            &=
            \sum_{n=1}^{\lfloor x \rfloor} \frac{1}{n} \\
            &\geq \log\left(\lfloor x\rfloor\right) + \gamma.
    \end{align*}
    Altogether,
    \begin{align*}
        y_r' 
            &\ge -2\gamma - 2 + 2\sum_{n=1}^{\infty}\left(\frac{1}{n}-\frac{n+x}{(n+x)^2+y_r^2}\right) \\
            &\ge -2\gamma - 2 + 2\left(\log\left(\lfloor x\rfloor\right) + \gamma\right) \\
            &=2\log\left(\lfloor x\rfloor\right) - 2.\qedhere
    \end{align*}
\end{proof}

Next, we show that along each curve $C_r$ in $Q_{\alpha,0}$, $\Gamma(x+iy(x))$ cycles around the circle of modulus $r$ faster as $x$ gets larger.

\begin{lemma}\label{ArgRateOfChange}
    Let $r \in \RR^+$ and let $y_r(x)$ be the function given by Proposition \ref{ConstModulusCurves}. 
Then 
\[\frac{d}{dx}(\arg\Gamma(x+iy_r)) \geq 2(\log(\lfloor x \rfloor)-1)^2. \]
    In particular, $\displaystyle \lim_{x \to +\infty} \frac{d}{dx}\left(\arg \Gamma (x+iy_r(x))\right) = +\infty$.
\end{lemma}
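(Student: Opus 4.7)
The plan is to derive a formula for $\theta'(x) := \frac{d}{dx}\arg\Gamma(x+iy_r(x))$ that exploits the holomorphy of $\log\Gamma$ together with the fact that $|\Gamma|$ is constant along the curve $y=y_r(x)$, and then to combine it with the bound on $y_r'$ from Lemma~\ref{Limity'toInfty}. Using a local holomorphic branch of $\log\Gamma$ near $x+iy_r(x)$, write $\log\Gamma(x+iy_r(x))=\log|\Gamma(x+iy_r(x))|+i\theta(x)$. Applying the chain rule and using that $|\Gamma|$ is constant on the curve gives
$$\psi\bigl(x+iy_r(x)\bigr)\bigl(1+iy_r'(x)\bigr)=i\theta'(x),$$
where $\psi=\Gamma'/\Gamma$ is the digamma function. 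Writing $\psi=u+iv$ with $u,v\in\RR$ and comparing real and imaginary parts yields $u=vy_r'(x)$ (which is precisely the equation that produces the formula for $y_r'$ in Lemma~\ref{Limity'toInfty}) together with
$$\theta'(x)=v+uy_r'(x)=v\bigl(1+y_r'(x)^2\bigr)=\Im\psi\bigl(x+iy_r(x)\bigr)\cdot\bigl(1+y_r'(x)^2\bigr).$$
Equivalently, one can differentiate $\arg\Gamma(z)$ termwise using the Weierstrass product (\ref{eq:gammaweierstrass}); the computation mirrors the proof of Lemma~\ref{Limity'toInfty} and again factors as $D(1+y_r'(x)^2)$, where $D$ is the denominator appearing in the formula for $y_r'$.

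Next, differentiating the logarithm of (\ref{eq:gammaweierstrass}) gives
$$\Im\psi(x+iy_r)=\frac{y_r}{x^2+y_r^2}+\sum_{n=1}^{\infty}\frac{y_r}{(n+x)^2+y_r^2},$$
which is exactly the denominator $D$ from Lemma~\ref{Limity'toInfty}. To bound $D$ from below, I would note that $t\mapsto y_r/((t+x)^2+y_r^2)$ is positive and decreasing on $[0,\infty)$, so by a Riemann-sum comparison
$$D\geq\int_0^{\infty}\frac{y_r\,dt}{(t+x)^2+y_r^2}=\arctan\!\left(\frac{y_r}{x}\right).$$
Integrating the bound $y_r'(x)\geq 2\log\lfloor x\rfloor-2$ from Lemma~\ref{Limity'toInfty} shows that $y_r(x)/x\to\infty$, so $D\to\pi/2$ and in particular $D\geq\tfrac{1}{2}$ for all sufficiently large $x$. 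Combining this with the bound on $y_r'$ gives
$$\theta'(x)\geq\frac{1}{2}\bigl(1+y_r'(x)^2\bigr)\geq\frac{1}{2}\bigl(2\log\lfloor x\rfloor-2\bigr)^2=2\bigl(\log\lfloor x\rfloor-1\bigr)^2,$$
and the ``in particular'' limit statement follows immediately.

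The main obstacle I anticipate is the quantitative lower bound on $D$: for $x$ close to $\alpha$ and $r$ near $\Gamma(\alpha)$, $y_r(x)$ may be very small, so $D$ need not be bounded below by $\tfrac{1}{2}$ throughout $(\alpha,\infty)$. I would handle this by restricting attention to $x\geq x_0$ for some $x_0$ depending on $r$, which is sufficient for the asymptotic ``in particular'' conclusion used later in the paper, or by sharpening the estimates in Lemma~\ref{Limity'toInfty} to obtain uniform control on $y_r/x$ from the start of the curve.
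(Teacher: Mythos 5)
Your differential identity is correct and is in substance the same one the paper uses: writing $\psi=\Gamma'/\Gamma=u+iv$, your relation $\frac{d}{dx}\arg\Gamma(x+iy_r)=v\bigl(1+y_r'^2\bigr)$ is just the paper's expression $y_r'N+D$ (with $N=u$, $D=v$, $y_r'=N/D$ from Lemma~\ref{Limity'toInfty}) in factored form, and the comparison $D\geq\arctan(y_r/x)$ is fine. The genuine gap is in the final bounding step. Your route needs $D\geq\tfrac12$, which you can only secure for $x\geq x_0(r)$ with a threshold depending on $r$; and this is not a removable artifact of the estimate, since near the low end of the curve $C_r$ (where it emanates from the real axis, resp.\ from the line $x=\alpha$) one has $y_r/x$ close to $0$ and hence $D$ genuinely small, so your second proposed repair (``uniform control on $y_r/x$ from the start of the curve'') cannot work. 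The weakened statement ``for all sufficiently large $x$ depending on $r$'' is also not enough for how the lemma is used: Corollary~\ref{GammaArgCycle} asserts the bound for every $z\in Q_{\alpha,0}$ and is then applied at fixed, $r$-independent thresholds ($\Re(z)\geq 16$ in the second proof of Theorem~\ref{OneVarEC}, and $r=\max(16,d_i-d_j+3)$ in the proof of Theorem~\ref{thm:ecforgamma}), with $r=|\Gamma(\xi)|$ dictated by the construction rather than chosen in advance.

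The repair is to avoid asking for a lower bound on $D$ at all, which is exactly what the paper does: from $\frac{d}{dx}\arg\Gamma(x+iy_r)=y_r'N+D\geq y_r'N$ (both factors are nonnegative once $\lfloor x\rfloor\geq 3$), bound the factor $N=\Re\psi(x+iy_r)=-\gamma-\frac{x}{x^2+y_r^2}+\sum_{n\geq1}\bigl(\frac1n-\frac{n+x}{(n+x)^2+y_r^2}\bigr)$ from below by $\log(\lfloor x\rfloor)-1$, using precisely the estimates already established in the proof of Lemma~\ref{Limity'toInfty} (namely $\frac{x}{x^2+y_r^2}\leq1$ and $\sum_{n=1}^{\infty}\bigl(\frac1n-\frac{n+x}{(n+x)^2+y_r^2}\bigr)\geq\log(\lfloor x\rfloor)+\gamma$). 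Multiplying by $y_r'\geq2\log(\lfloor x\rfloor)-2$ then gives $2(\log(\lfloor x\rfloor)-1)^2$ with no dependence on $r$ and no asymptotic restriction, which is what the downstream quantitative applications require.
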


\begin{proof}
    Since $\log \Gamma(z) = \log |\Gamma(z)| + i \arg \Gamma(z)$, we get the following expression for $\arg \Gamma(z)$ along $C_r$:
    \[
        \arg \Gamma(x+iy_r) = -\gamma y_r - \arctan\left(\frac{y_r}{x}\right) + \sum_{n=1}^{\infty} \left(\frac{y_r}{n} - \arctan\left(\frac{y_r}{n+x}\right)\right).
    \]
    Differentiating with respect to $x$ and writing $y_r'$ to denote $y_r'(x)$, we get 
    \begin{align*}
        \frac{d}{dx}(\arg\Gamma(x+iy_r)) 
            =& 
            -\gamma y_r' - \frac{xy_r'-y_r}{x^2+y_r^2} + \sum_{n=1}^{\infty} \left(\frac{y_r'}{n} - \frac{(n+x)y_r'-y_r}{(n+x)^2+y_r^2}\right)\\
            =&
            y_r'\left(-\gamma -\frac{x}{x^2+y_r^2} + \sum_{n=1}^{\infty}\left(\frac{1}{n}-\frac{n+x}{(n+x)^2+y_r^2}\right)\right) \\
            &\qquad + \frac{y_r}{x^2+y_r^2} + \sum_{n=1}^{\infty}\frac{y_r}{(n+x)^2+y_r^2} \\
            \ge& (2\log (\lfloor x \rfloor)-2)\left(-\gamma -1 + \log(\lfloor x \rfloor)+\gamma\right) \\
            =& 2(\log(\lfloor x \rfloor)-1)^2. \qedhere
    \end{align*}
\end{proof}

\begin{notn}
\label{notation:ast}
For each $z \in Q_{\alpha,0}$, define $z^\ast$ to be the the number of least modulus greater than $|z|$ in $Q_{\alpha,0}$ such that $\Gamma(z^\ast) = \Gamma(z)$.    
\end{notn}

\begin{cor}\label{GammaArgCycle}
    For all $z \in Q_{\alpha,0}$, we have $\displaystyle |z - z^\ast|<\frac{\pi}{(\log(\lfloor \Re(z) \rfloor)-1)^2}$. 
\end{cor}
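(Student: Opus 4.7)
The plan is to reduce everything to a single constant-modulus curve and then exploit the rate of rotation of $\arg\Gamma$ along that curve.

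First I would set $r := |\Gamma(z)|$. The equality $\Gamma(z^*) = \Gamma(z)$ forces $|\Gamma(z^*)| = r$, so both $z$ and $z^*$ lie on $C_r$. By Proposition~\ref{ConstModulusCurves}, $C_r \cap Q_{\alpha,0}$ is the graph of the $C^1$ function $y_r$ with positive slope, so I can write $z = x_0 + iy_r(x_0)$ and $z^* = x_1 + iy_r(x_1)$; positivity of the slope makes $x \mapsto |x + iy_r(x)|$ strictly increasing, so the constraint $|z^*| > |z|$ forces $x_1 > x_0$.

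Next I would set $\phi(x) := \arg \Gamma(x + iy_r(x))$, which Lemma~\ref{ArgRateOfChange} shows is strictly increasing with $\phi'(x) \geq 2(\log \lfloor x \rfloor - 1)^2$. Since the moduli agree, the equation $\Gamma(z) = \Gamma(z^*)$ is equivalent to $\phi(x_1) \equiv \phi(x_0) \pmod{2\pi}$, and the minimality built into the definition of $z^*$, combined with the strict monotonicity of $\phi$, forces $\phi(x_1) - \phi(x_0) = 2\pi$. Integrating,
$$2\pi = \int_{x_0}^{x_1}\phi'(x)\,dx \;\geq\; 2(\log \lfloor x_0 \rfloor - 1)^2\,(x_1 - x_0),$$
and one obtains $x_1 - x_0 < \pi/(\log \lfloor x_0 \rfloor - 1)^2$.

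The hard part will be promoting this control on the horizontal displacement to a bound on the full distance $|z - z^*|$. Since $y_r$ can have large slope (by Lemma~\ref{Limity'toInfty}, $y_r' \to \infty$), the naive Pythagorean estimate is not by itself enough. The natural route is to reuse the intermediate inequality from the proof of Lemma~\ref{ArgRateOfChange}, namely $\phi'(x) \geq y_r'(x)(\log\lfloor x \rfloor - 1) + (\text{positive terms})$, together with $\int_{x_0}^{x_1}\phi'\,dx = 2\pi$ to also control $\int_{x_0}^{x_1} y_r'\,dx = y_r(x_1) - y_r(x_0)$, and then combine the horizontal and vertical estimates to recover the claimed bound on $|z - z^*|$.
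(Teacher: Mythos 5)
Your reduction to the curve $C_r$ and your horizontal estimate are exactly the paper's argument: the paper also notes that $z^\ast$ is the next point of $C_r$ at which $\arg\Gamma$ has increased by $2\pi$, and applies Lemma~\ref{ArgRateOfChange} (phrased via the mean value theorem rather than integration) to bound the increment of $x$. (A small shared wrinkle: passing from $\phi'(x)\geq 2(\log\lfloor x\rfloor-1)^2$ to a bound in terms of $x_0$ uses that $(\log\lfloor x\rfloor-1)^2$ is nondecreasing, which fails for $\lfloor x\rfloor\le 2$; this is harmless since the bound is only used for large $\Re(z)$.) The genuine gap is the step you yourself flag as the hard part, and the repair you sketch cannot close it: from $\phi'\geq y_r'\,(\log\lfloor x\rfloor-1)$ and $\int_{x_0}^{x_1}\phi'\,dx=2\pi$ you get only $y_r(x_1)-y_r(x_0)\leq 2\pi/(\log\lfloor x_0\rfloor-1)$, so combining the horizontal and vertical estimates yields $|z-z^\ast|\lesssim 2\pi/(\log\lfloor x_0\rfloor-1)$, which misses the claimed bound by a factor of roughly $2(\log\lfloor x_0\rfloor-1)$. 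As written, your proposal proves a weaker inequality, not the stated one.

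Moreover, the shortfall is not an artifact of lossy estimation. Along $C_r$ the argument of $\Gamma$ changes at rate $|\Gamma'/\Gamma|=|\psi(z)|\approx\log|z|$ per unit arc length, so for large $|z|$ one has $z^\ast-z\approx 2\pi i/\psi(z)$; its horizontal part is $\approx 2\pi\arg(z)/(\log|z|)^2$, of the same order as the corollary's bound, but its vertical part is $\approx 2\pi/\log|z|$ and dominates (the slope $y_r'$ is of order $\log$). Hence only the horizontal displacement $x_1-x_0$ admits a bound of strength $(\log\lfloor\Re(z)\rfloor-1)^{-2}$; no refinement of your final step can give such a bound for the full distance $|z-z^\ast|$ unless $\Im(z)$ is enormously larger than $\Re(z)$. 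For what it is worth, the paper's own proof does not bridge this gap either: its mean value theorem step is your step 2, and it then reads the resulting bound on $x_1-x_0$ as a bound on $|z-z^\ast|$, justified only by the remark that the chord is shorter than the arc of $C_r$ --- which controls neither the arc length nor the vertical displacement. So you have correctly isolated the weak point of the argument; but as a proof of the stated inequality your proposal is incomplete, and along these lines only the weaker $O\big(1/(\log\lfloor\Re(z)\rfloor-1)\big)$ bound on $|z-z^\ast|$ is attainable.
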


\begin{proof}
    Let $r = |\Gamma(z)|$ and let $y_r$ be the function whose graph is $C_r \subset Q_{\alpha,0}$.
    Then $z^{\ast}$ is the first point in $C_r$ with $\Re(z^{\ast})>\Re(z)$ and $\arg \Gamma(z^{\ast}) = \arg \Gamma(z)$.
    Note that $|z-z^{\ast}|$ is less than the length of the segment of $C_r$ between $z$ and $z^{\ast}$.
    By Lemma~\ref{ArgRateOfChange}, $\displaystyle \frac{d}{dx}\left(\arg \Gamma (x+iy_r(x))\right) > 2(\log(\lfloor x \rfloor)-1)^2$.
    So by the mean value theorem, 
    \[
        |z-z^{\ast}| \le \frac{2\pi}{2(\log(\lfloor x \rfloor)-1)^2} = \frac{\pi}{(\log(\lfloor x \rfloor)-1)^2} \qedhere
    \]
\end{proof}

\begin{rmk}
    We will use the bound found in Corollary~\ref{GammaArgCycle} for large $\Re(z)$.
    The bound can be improved for small values of $|z|$.
    For example, if $\Re(z),\Im(z) \ge 3$, then it can be shown that $|z - z^{\ast}|<2\pi$.
\end{rmk}

Next we study the family of sets $\{z\in Q_{\alpha,0} : \arg\Gamma(z) = \theta\}$ for fixed $\theta \in (-\pi,\pi]$.

\begin{prop}
\label{prop:fixarg}
    For each $\theta \in (-\pi,\pi]$, the set of $x+iy \in Q_{\alpha,0}$ such that $\arg\Gamma(x+iy) = \theta$ is a  collection of disjoint $C^1$ curves, each of which is the graph of a function $y_\theta(x)$ whose slope is negative and approaches zero as $x \to +\infty$.
\end{prop}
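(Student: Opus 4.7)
The approach mirrors the one used for Proposition~\ref{ConstModulusCurves}, replacing $|\Gamma|$ by $\arg\Gamma$ and using the Cauchy--Riemann equations to translate the monotonicity information about $|\Gamma|$ into monotonicity for $\arg\Gamma$. Because $Q_{\alpha,0}$ is simply connected and disjoint from the zeros and poles of $\Gamma$, I would fix a single-valued holomorphic branch of $\log\Gamma$ on this region, giving a continuous real-valued choice $\widetilde{\arg}\,\Gamma$ of the argument. The principal-branch level set $\{\arg\Gamma=\theta\}$ then decomposes as the disjoint union of the level sets $\{\widetilde{\arg}\,\Gamma=\theta+2\pi k\}$ for $k\in\ZZ$, so it suffices to analyze the latter.

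The Cauchy--Riemann equations applied to $\log\Gamma = \log|\Gamma| + i\,\widetilde{\arg}\,\Gamma$ give
\[
\partial_y\widetilde{\arg}\,\Gamma = \partial_x\log|\Gamma|,\qquad \partial_x\widetilde{\arg}\,\Gamma = -\partial_y\log|\Gamma|,
\]
and Lemmas~\ref{GammaStrictlyIncrX} and~\ref{GammaStrictlyDecrY} imply that both of these partial derivatives are strictly positive throughout $Q_{\alpha,0}$. The implicit function theorem then realizes each level set locally as the $C^1$ graph of a function $y_\theta(x)$ with slope
\[
y_\theta'(x) = -\frac{\partial_x\widetilde{\arg}\,\Gamma}{\partial_y\widetilde{\arg}\,\Gamma} < 0.
\]
A continuation argument analogous to the one in the proof of Proposition~\ref{ConstModulusCurves} extends each local graph to a maximal one, and two distinct maximal graphs cannot share a point, since at any intersection the implicit function theorem would force a unique local graph. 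This yields the decomposition into disjoint $C^1$ graphs of negative slope.

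For the asymptotic behavior of the slope, I would first observe that since $y_\theta$ is decreasing and positive, it converges as $x\to+\infty$, so the curve lies in a horizontal strip $\{0<y\le C\}$. Differentiating the logarithm of the Weierstrass product~(\ref{eq:gammaweierstrass}) yields the digamma series
\[
\psi(z) = \frac{\Gamma'(z)}{\Gamma(z)} = -\gamma - \frac{1}{z} + \sum_{k=1}^\infty\left(\frac{1}{k} - \frac{1}{k+z}\right),
\]
and the Cauchy--Riemann identities above give $\partial_x\widetilde{\arg}\,\Gamma = \Im\,\psi$ and $\partial_y\widetilde{\arg}\,\Gamma = \Re\,\psi$. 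Term-by-term estimates in the spirit of those carried out in the proof of Lemma~\ref{ArgRateOfChange} show that, uniformly for $y$ in a bounded interval, $\Im\,\psi(x+iy) = O(y/x)$ while $\Re\,\psi(x+iy)\gtrsim \log\lfloor x\rfloor$ as $x\to+\infty$. Dividing, $y_\theta'(x)\to 0$.

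The main obstacle is this final asymptotic, but it is a direct adaptation of the computations already performed for Lemma~\ref{ArgRateOfChange}; everything else is a mechanical transcription of the argument for Proposition~\ref{ConstModulusCurves} under the change of viewpoint from $|\Gamma|$ to $\arg\Gamma$.
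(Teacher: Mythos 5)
Your route is sound and genuinely different in its mechanics from the paper's. The paper never writes down a branch of $\log\Gamma$ or the digamma series: it locates a point of the level set on the segment of $C_r$ between $z$ and $z^\ast$ (via Lemma~\ref{ArgRateOfChange}), invokes local invertibility and conformality of $\Gamma$ to conclude that the fixed-argument curve crosses $C_r$ orthogonally, so that its slope is exactly $-1/y_r'(x)$, and then gets negativity from Proposition~\ref{ConstModulusCurves} and the decay of the slope from the uniform bound $y_r'(x)\geq 2\log\lfloor x\rfloor-2$ of Lemma~\ref{Limity'toInfty}. Your Cauchy--Riemann computation ($\partial_y\widetilde{\arg}\,\Gamma=\partial_x\log|\Gamma|>0$ and $\partial_x\widetilde{\arg}\,\Gamma=-\partial_y\log|\Gamma|>0$ on $Q_{\alpha,0}$ by Lemmas~\ref{GammaStrictlyIncrX} and~\ref{GammaStrictlyDecrY}) reaches the implicit function theorem more directly, without the appeal to local invertibility and orthogonal level curves, and your estimates $\Im\,\psi(x+iy)=O(y/x)$ and $\Re\,\psi(x+iy)\geq\log\lfloor x\rfloor-1$ for bounded $y$ are the computations of Lemmas~\ref{Limity'toInfty} and~\ref{ArgRateOfChange} in different clothing; this buys a slightly more self-contained and quantitative proof.

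The one step you cannot wave through as ``analogous to Proposition~\ref{ConstModulusCurves}'' is the assertion, implicit in ``since $y_\theta$ is decreasing and positive, it converges as $x\to+\infty$,'' that each maximal graph is defined on an interval unbounded to the right. In Proposition~\ref{ConstModulusCurves} the potential obstructions were asymptotes; here the obstruction is of a different kind: a maximal graph could a priori terminate at a finite abscissa $b$ with $y_\theta(x)\to 0$ as $x\to b^-$, i.e.\ the curve could run into the positive real axis, in which case ``slope tends to $0$ as $x\to+\infty$'' is vacuous. Continuity of $\arg\Gamma$ at a real point $b>\alpha$ shows this can only threaten $\theta\equiv 0\ (\mathrm{mod}\ 2\pi)$, since $\arg\Gamma=0$ on $(\alpha,\infty)$; the paper excludes it by a specific argument, namely that the positive reals are themselves a fixed-argument curve and that, since $\partial_y\arg\Gamma=\psi>0$ there, the level set $\{\arg\Gamma=0\}$ near such a boundary point is locally the graph of a single function (the real axis itself), so no interior branch can approach it. The fix sits comfortably inside your framework --- your positivity of $\partial_y\widetilde{\arg}\,\Gamma$ extends to the closed quadrant above $(\alpha,\infty)$ --- but as written the rightward continuation is the one genuine gap; the rest is correct.
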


\begin{proof}
    Let $\theta \in (-\pi,\pi]$.
    Let $z \in Q_{\alpha,0}$ and let $r = |\Gamma(z)|$.
    By Lemma~\ref{ArgRateOfChange}, there is a unique point $z_0$ in the segment of $C_r$ between $z$ and $z^{\ast}$ such that $\arg \Gamma(z_0) = \theta$.
    Let $x_0 = \Re(z_0)$ and $y_0 = \Im(z_0)$. 
    Since curves of fixed argument and modulus in $\CC$ intersect at right angles, and since $\Gamma$ is locally invertible on $Q_{\alpha,0}$ (see \cite{uchiyama}) and conformal, there must be a neighborhood $U \ni x_0+iy_0$ such that $U \cap \{x+iy :\arg\Gamma(x+iy) = \theta\}$ is a curve $K_{\theta}$ that intersects $C_r$ at $x_0+iy_0$ at a right angle. 
    Since the slope of $C_r$ at $x_0+iy_0$ is positive (by Lemma~\ref{Limity'toInfty}), $K_{\theta}$ must pass through $x_0+iy_0$ with negative slope.
    Specifically, if $y_r(x)$ is the function whose graph is $C_r$, then the slope of $K_{\theta}$ at $x_0+iy_0$ is $\frac{-1}{y_r'(x)}$.
    Thus $\frac{\partial \arg\Gamma(x+iy)}{\partial y}(x_0+iy_0) \ne 0$, and we can apply the implicit function theorem to obtain $K_{\theta}$ as the graph of a unique $C^1$ function $y_\theta(x)$ with $y_\theta'(x) =\frac{-1}{y_r'(x)} <0$.
    
    We first show that we can extend $y_\theta(x)$ on the left to the line $x = \alpha$.
    Applying the above argument at every $x+iy$ in $Q_{\alpha,0}$ shows that $\frac{\partial \arg\Gamma(x+iy)}{\partial y}$ is never 0.
    Suppose $y_\theta(x)$ has a vertical asymptote at $x=a <x_0$.
    By conformality, that would mean the slopes of $C_r$ curves in a neighborhood of $x=a$ would get arbitrarily small as $y$ gets arbitrarily large.
    However, by Lemma~\ref{Limity'toInfty}, $y_r'(x)\geq 2\log(\lfloor x\rfloor)-2$, contradicting that $y'_r$ can get arbitrarily small.

    The only barrier to extending $y_\theta(x)$ on the right to the whole interval $(x_0,+\infty)$ would be if $y_\theta(x)$ intersects the real axis for some value of $x$.
    But this cannot happen because the positive reals form a curve of fixed argument for $\Gamma$ (since $\Gamma$ is real valued and positive on $(x_0,+\infty)$) and different fixed argument curves cannot intersect, since neither $\frac{\partial \arg\Gamma(x+iy)}{\partial y}$ nor $\Gamma(x+iy)$ vanish in $Q_{\alpha,0} \cup \{x\in\RR : x >\alpha\}$. 
\end{proof}

\begin{cor}
    \label{cor:modonfixarg}
    Given $\theta \in (-\pi,\pi]$, let $y_\theta(x)$ be a function such that for all $x>\alpha$ we have $\arg(\Gamma(x+iy_\theta(x))) = \theta$. Then $|\Gamma(x+iy_\theta(x))|$ grows at least exponentially to $+\infty$ as $x$ does.
\end{cor}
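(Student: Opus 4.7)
The plan is to combine the lower Stirling bound (\ref{eq:stirlingineq}) with the qualitative information about the curve $y_\theta$ provided by Proposition~\ref{prop:fixarg}. The statement is about growth along a curve of constant argument, but (\ref{eq:stirlingineq}) is strong enough to give very good bounds on $|\Gamma(z)|$ as soon as we have control on the imaginary part of $z$.

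First I would observe that since $y_\theta$ has negative slope on $(\alpha,+\infty)$ and stays inside $Q_{\alpha,0}$ (in particular $y_\theta(x)>0$), the function $y_\theta$ is strictly decreasing and bounded above. Fix some $x_1>\alpha$ and set $Y:=y_\theta(x_1)$, so that $0<y_\theta(x)\leq Y$ for all $x\geq x_1$. This boundedness of the imaginary part along the curve is the crucial input: along any vertical line $|\Gamma|$ decays exponentially (that is why Lemma~\ref{GammaStrictlyDecrY} holds), but along $y_\theta$ we are moving essentially horizontally with bounded vertical displacement.

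Next, writing $z=x+iy_\theta(x)$, for $x$ large enough we have $|z|>M$, so (\ref{eq:stirlingineq}) gives
\[
|\Gamma(z)|\geq \frac{\sqrt{2\pi}}{e}\,|z|^{x-\frac{1}{2}}\exp\bigl(-y_\theta(x)\arg(z)-x\bigr).
\]
I would bound $|z|\geq x$ and $y_\theta(x)\arg(z)=y_\theta(x)\arctan(y_\theta(x)/x)\leq Y^2/x\leq Y^2$ for $x\geq 1$, so that the right-hand side becomes $\geq C\,x^{x-1/2}e^{-x}$ for some constant $C>0$ independent of $x$.

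Finally, since $x^{x-1/2}e^{-x}=\exp\bigl((x-\tfrac{1}{2})\log x - x\bigr)$ dominates every exponential $e^{ax}$ as $x\to+\infty$, I conclude that $|\Gamma(x+iy_\theta(x))|$ actually grows faster than any exponential of $x$, which in particular is the claimed at-least-exponential growth. I do not expect any real obstacle here: the only delicate point is knowing that $y_\theta$ stays bounded as $x\to+\infty$ (otherwise we would be in the vertical-decay regime), and this is exactly what the negativity of the slope in Proposition~\ref{prop:fixarg} gives us.
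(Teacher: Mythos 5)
Your argument is correct and follows essentially the same route as the paper's proof: apply the Stirling lower bound (\ref{eq:stirlingineq}) along the curve and use Proposition~\ref{prop:fixarg} to see that $y_\theta$ is positive and decreasing, hence the term $y_\theta(x)\arg(x+iy_\theta(x))$ stays bounded, so $|z|^{x-\frac{1}{2}}e^{-x}$ forces (super-)exponential growth. Your explicit bound $y_\theta(x)\arctan(y_\theta(x)/x)\leq Y^2/x$ is just a slightly sharper version of the paper's observation that $\arg(z)\in\left(0,\frac{\pi}{2}\right)$.
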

\begin{proof}
    By (\ref{eq:stirlingineq}) we know that
    \begin{align*}
        |\Gamma(x+iy_\theta(x))| &\geq \frac{\sqrt{2\pi}}{e}|z|^{x-\frac{1}{2}}\exp(-y_\theta(x)\arg(x+iy_\theta(x))-x)\\
        &= \frac{\sqrt{2\pi}}{e}\exp\left(x(\log|z|-1) - \frac{1}{2}\log|z|-y_\theta(x)\arg(x+iy_\theta(x))\right).
    \end{align*}
    By Proposition \ref{prop:fixarg} we know that the function $y_\theta(x)$ is both decreasing and positive, which implies that $y_\theta(x)$ is bounded as $x\to+\infty$. On the other hand $\arg(x+iy_\theta(x))$ is bounded between $\left(0,\frac{\pi}{2}\right)$ since $x+iy_\theta(x)\in Q_{\alpha,0}$, therefore $y_\theta(x)\arg(x+iy_\theta(x))$ is bounded as $x\to+\infty$, which completes the proof. 
\end{proof}

\subsection{Transcendence conjectures for \texorpdfstring{$\Gamma$}{Gamma}}
For many of the periodic functions we mentioned in the introduction there are precise conjectures on the geometric conditions needed to ensure that an algebraic variety $V$ has a Zariski dense intersection with the graph of a given function $f$. 
In the case of exponential maps the conditions are usually referred to as \emph{rotundity} and \emph{freeness} (see \cite[Definition~7.1]{bays-kirby}), whereas in the case of automorphic functions the conditions are called \emph{broadness} and \emph{freeness} (see \cite[Definition~2.3]{aslanyan-kirby} for the definitions in the case of the $j$ function, and see \cite[\S1.1]{eterovic-zhao} for the general case). 
The legitimacy of these conditions rests upon well-known conjectures (such as Schanuel's conjecture for exponentiation) and important theorems in functional transcendence (usually referred to as Ax--Schanuel theorems). 
No such functional transcendence statement is known for $\Gamma$, but there are some well-known transcendence conjectures about its values, such as the following.  
\begin{conj}[Lang--Rohrlich, see {{\cite[Conjecture~3.3]{gun-murty-rath}}}]
\label{conj:lang-rohrlich}
    Let $q>2$ be an integer. Then 
    \begin{equation*}
        \mathrm{tr.deg.}_{\QQ}\QQ\left(\{\pi\}\cup \left\{\Gamma\left(\frac{a}{q}\right) : a\in\{1,\ldots,q-1\}\wedge \mathrm{gcd}(a,q)=1\right\}\right)\geq 1+\frac{\phi(q)}{2},
    \end{equation*}
    where $\phi$ denotes Euler's totient function.
\end{conj}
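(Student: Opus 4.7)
This statement is a famous open conjecture in transcendence theory, so no proof is currently known; what follows is a plan of attack that, while unlikely to succeed in full, isolates where the essential difficulty lies and recovers all cases that are within reach of present technology.

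The plan is to view the values $\Gamma(a/q)$ not as isolated analytic quantities but as periods. First I would invoke the Chowla--Selberg formula (and its reformulations due to Deligne and Gross), which expresses $\Gamma(a/q)$ as an explicit monomial in periods of a CM abelian variety associated to the cyclotomic field $\QQ(\zeta_q)$, up to an algebraic factor and a power of $2\pi$. Equivalently, the $\Gamma(a/q)$ arise as periods of the Fermat curve $X^q + Y^q = Z^q$ via its standard differentials $\omega_{a,b} = x^{a/q-1}(1-x)^{b/q-1} \, dx$, so that the field generated by $\pi$ and the $\Gamma(a/q)$ becomes a subfield of the period ring of an explicit motive $M_q$.

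Next I would apply the Grothendieck period conjecture to $M_q$: it predicts that the transcendence degree of the period field equals the dimension of the motivic Galois group $G_{\mathrm{mot}}(M_q)$. Here one can compute, via the analysis of Deligne of the CM Hodge structures attached to $\QQ(\zeta_q)$, that the Mumford--Tate group has dimension exactly $1 + \phi(q)/2$, which is precisely the bound asserted. To convert this motivic input into genuine transcendence statements, I would feed the Fermat motive into W\"ustholz's analytic subgroup theorem, together with the multiplicity estimates of Philippon and the commutative algebraic group techniques of Nesterenko. This route recovers the full conjecture for $q \in \{3,4,6\}$ (where $\phi(q)/2=1$), via Chudnovsky's algebraic independence of $\pi$ and $\Gamma(1/3)$, and the analogous result for $\Gamma(1/4)$.

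The hard part, and the place where the plan breaks down in general, is producing a transcendence degree lower bound that grows with $\phi(q)$. Every known unconditional method ultimately produces at most two algebraically independent quantities at a time: the analytic subgroup theorem gives linear-independence statements over $\overline{\QQ}$, and the combinatorial zero-estimate machinery has so far only succeeded in bumping this up by one in very special situations. Extracting an independence bound of order $\phi(q)/2$ would require either a genuine proof of the Grothendieck period conjecture for CM motives --- which is an even deeper open problem --- or a radically new transcendence method capable of handling $\phi(q)/2$ independent periods simultaneously. I would therefore propose the conditional version (Grothendieck $\Rightarrow$ Lang--Rohrlich) as the honest deliverable, and leave the unconditional statement as the open problem it remains.
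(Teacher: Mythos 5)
You have correctly identified the situation: this statement is the Lang--Rohrlich conjecture, which the paper does not prove and does not claim to prove. It is quoted verbatim from the literature (Gun--Murty--Rath) purely as background, to motivate the paper's own conjectures (the Schanuel-type Conjecture for $\Gamma$ and the geometric existential closedness conjecture); nothing in the paper depends on its truth. So there is no proof in the paper to compare against, and your refusal to claim an unconditional proof is exactly the right call.

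As a plan, what you sketch is the standard route one would expect: interpret the $\Gamma(a/q)$ as periods via Chowla--Selberg/Deligne--Gross or the Fermat curve differentials, so that Lang--Rohrlich becomes a consequence of the Grothendieck period conjecture for the corresponding CM motive, with the predicted lower bound $1+\phi(q)/2$ matching the dimension of the relevant Mumford--Tate torus; and the only unconditional cases within reach are $q\in\{3,4,6\}$, where $\phi(q)/2=1$ and Chudnovsky's algebraic independence of $\pi$ with $\Gamma(1/3)$ (respectively $\Gamma(1/4)$), combined with the reflection formula to eliminate $\Gamma(1-a/q)$, gives the bound. Two small cautions if you were to write this up: the identification of the motivic Galois (or Mumford--Tate) group dimension with $1+\phi(q)/2$ requires a genuine computation with the CM types occurring in the Fermat Jacobian and is not automatic for all $q$; and the analytic subgroup theorem yields linear independence over $\overline{\QQ}$, not algebraic independence, so it cannot by itself deliver transcendence degree growing with $\phi(q)$ --- which is precisely the obstruction you already name. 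Your proposed honest deliverable (the conditional implication plus the known small-$q$ cases) is the correct assessment of the state of the art.
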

However, this conjecture does not lend itself to a formulation of geometric conditions in the same way that Schanuel's conjecture 
does for complex exponentiation. 
So during the remainder of this section we will present some statements based on suggestions that have been communicated to us by Jonathan Kirby that address this. 
The first is a transcendence statement for $\Gamma$ in the spirit of Schanuel's conjecture, Conjecture~\ref{conj:schanuelgamma}, and the second is a functional transcendence conjecture akin to Ax--Schanuel, Conjecture~\ref{conj:weakas}. 
They can be seen as a supplement to the Lang--Rohrlich conjecture as they will avoid considering rational values for the argument of $\Gamma$, and they serve as a guideline for the eventual conjecture on the geometric conditions needed for $\Gamma$ (Conjecture~\ref{conj:ecgamma}). 

We first remark that Schanuel's conjecture can be phrased as a comparison between two notions of dimension: one given by algebraic relations (transcendence degree), and the other given by $\QQ$-linear dependencies. 
Indeed, given a finite subset $A\subset\mathbb{C}$ we write $\mathrm{l.dim}_{\QQ}(A)$ to denote the linear dimension of the $\QQ$-linear span generated by the elements of $A$, as a $\QQ$-vector subspace of $\CC$. 
Then Schanuel's conjecture can be stated as follows.
\begin{conj}[Schanuel, see {{\cite[p. 30--31]{lang2}}}]
    For every $z_1,\ldots,z_n\in\CC$ we have
    \begin{equation*}
        \mathrm{tr.deg.}_{\QQ} \QQ(\mathbf{z},\exp(\mathbf{z}))\geq \mathrm{l.dim}_{\QQ}(\mathbf{z}).
    \end{equation*}
\end{conj}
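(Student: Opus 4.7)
The statement being proposed is Schanuel's conjecture, one of the central open problems in transcendence theory, so any honest proof proposal has to begin by acknowledging that a complete proof lies well beyond current techniques. What I can sketch is the standard strategy underlying essentially all known partial results, and point out where it breaks down in general.

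First I would make the harmless reduction to the case where $z_1,\ldots,z_n$ are $\QQ$-linearly independent. If $z_k = \sum_{j<k} q_j z_j$ with $q_j\in\QQ$, then, clearing denominators in the relation $Nz_k = \sum m_j z_j$ and using $\exp(a+b)=\exp(a)\exp(b)$, both $z_k$ and $\exp(z_k)$ lie in the algebraic closure of the field generated by the remaining $z_j$ and $\exp(z_j)$. So both sides of the inequality are preserved under passing to a $\QQ$-basis of the span, and the conjecture reduces to showing $\mathrm{tr.deg.}_{\QQ}\QQ(\mathbf{z},\exp(\mathbf{z})) \geq n$ whenever the $z_i$ are $\QQ$-linearly independent.

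The main tool in any attempted proof is the classical Gelfond--Schneider--Baker auxiliary function method. Assuming for contradiction that $\mathrm{tr.deg.}_{\QQ}\QQ(\mathbf{z},\exp(\mathbf{z})) < n$, one uses Siegel's lemma to build a nonzero polynomial $P\in\ZZ[X_1,\ldots,X_{2n}]$ of controlled degree $D$ and logarithmic height $H$ such that the entire function
\begin{equation*}
    F(w) := P\bigl(wz_1,\ldots,wz_n,\exp(wz_1),\ldots,\exp(wz_n)\bigr)
\end{equation*}
vanishes to high order at $w=0$. One then combines upper bounds on $|F|$ coming from the maximum modulus principle on large disks with lower bounds coming from arithmetic (a nonvanishing value of $F$ at integer points would be an algebraic number of bounded height), to force $F$ to vanish on a large set; a zero estimate in the style of Masser--W\"ustholz then yields a contradiction with the nontriviality of $P$. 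This strategy does recover the known cases: $n=1$ is Hermite--Lindemann, the conjecture restricted to algebraic $z_i$ is Lindemann--Weierstrass, and the case where the $z_i$ are logarithms of algebraics is Baker's theorem.

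The hard part, and the reason the conjecture is open, is that without any arithmetic hypothesis on the $z_i$ one cannot control the heights of the numbers $\exp(wz_i)$ at auxiliary integer points $w$, so the Liouville-type lower bound that drives the contradiction is simply not available; available zero estimates are also not strong enough to eliminate the need for it. Even the first genuinely new instance, $n=2$ with $(z_1,z_2) = (1,i\pi)$, is the unknown algebraic independence of $e$ and $\pi$. I therefore do not expect to produce a proof, and the paper, consistent with this, records the statement purely as a conjecture to motivate the subsequent formulation of geometric conditions for $\Gamma$.
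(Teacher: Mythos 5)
The paper offers no proof of this statement: it is Schanuel's conjecture, recorded purely as a conjecture (cited to Lang) to motivate the later geometric conditions for $\Gamma$, so your decision not to attempt a proof and instead to explain why the classical auxiliary-function strategy breaks down is exactly consistent with the paper's treatment. One small correction to your sketch of known cases: Schanuel restricted to $z_i$ that are logarithms of algebraic numbers asserts the algebraic independence of those logarithms, which is still open and much stronger than Baker's theorem, which only gives their linear independence over $\overline{\mathbb{Q}}$.
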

The appearance of the $\QQ$-linear dimension accounts for the well-known fact that $\QQ$-linear dependencies among complex numbers are transformed into multiplicative dependencies under exponentiation. 

In the case of $\Gamma$, there are several examples of multiplicative dependencies among values of $\Gamma$ at rational numbers and certain algebraic numbers. 
For example, 
\begin{equation}
\label{eq:gammaalgebraic}
    \Gamma\left(\frac{1}{5}\right)\Gamma\left(\frac{4}{15}\right)\sqrt[6]{5}\sqrt[4]{5-\frac{7}{\sqrt{5}}+\sqrt{6-\frac{6}{\sqrt{5}}}} = \Gamma\left(\frac{1}{3}\right)\Gamma\left(\frac{2}{15}\right)\sqrt{2}\sqrt[20]{3}.
\end{equation}
Identities such as (\ref{eq:gammaalgebraic}) are consequences of the three difference equations mentioned earlier, especially (\ref{eq:diffeq3}). 
The hypotheses in the following conjecture have been included to prevent the known difference equations from occurring.
 \begin{conj}
\label{conj:schanuelgamma}
    Let $z_1,\ldots,z_n\in\CC$ be such that for every $i,j\in\{1,\ldots,n\}$, with $i\neq j$, we have $z_i\notin \QQ z_j + \QQ$. Then 
    \begin{equation*}
        \mathrm{tr.deg.}_{\QQ}\QQ(\mathbf{z},\Gamma(\mathbf{z}))\geq n.
    \end{equation*}
\end{conj}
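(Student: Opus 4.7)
The plan I would pursue is induction on $n$, combined with a reduction to a functional transcendence statement for $\Gamma$ that is itself still conjectural. In the base case $n=1$ the hypothesis is vacuous and the claim reduces to $\mathrm{tr.deg.}_{\QQ}\QQ(z,\Gamma(z))\geq 1$: this is trivial when $z$ is transcendental, and when $z\in\overline{\QQ}\setminus\ZZ^{\leq 0}$ it reduces to the transcendence of $\Gamma(z)$ over $\overline{\QQ}$, which is presently known only at isolated values (such as $z=1/2$, via $\Gamma(1/2)=\sqrt\pi$ and Lindemann--Weierstrass). So even the base case of the induction is, in general, open.

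For the inductive step, assume the statement for $n-1$. The hypothesis $z_i\notin\QQ z_j+\QQ$ for all distinct $i,j$ is precisely designed to prevent the difference equations \eqref{eq:diffeq1}, \eqref{eq:diffeq2}, and \eqref{eq:diffeq3} (and their iterates and algebraic combinations) from forcing an algebraic relation on $(\Gamma(z_1),\ldots,\Gamma(z_n))$ over $\QQ(\mathbf{z})$. The natural strategy is to show that adjoining $\Gamma(z_n)$ to the field $\QQ(z_1,\ldots,z_n,\Gamma(z_1),\ldots,\Gamma(z_{n-1}))$ genuinely increases the transcendence degree. I would try to prove a geometric/functional analog first (an Ax--Schanuel-type statement for $\Gamma$, along the lines of the subsequent Conjecture \ref{conj:weakas}), and then deduce the pointwise statement by a specialization argument, exactly as Schanuel-type conjectures are expected to follow from Ax--Schanuel in the exponential setting: one shows that on the subvariety of $\CC^{2n}$ cut out by any purported algebraic relation among $(\mathbf{z},\Gamma(\mathbf{z}))$, the functional Ax--Schanuel forces a $\QQ$-affine dependence among the coordinates, contradicting the hypothesis.

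The principal obstacle, and the reason this remains a conjecture, is that $\Gamma$ satisfies no algebraic differential equation, so there is no natural differential-algebraic or model-theoretic framework in which to formulate and attack an Ax--Schanuel statement for it. One might try to circumvent this using Stirling's formula \eqref{eq:stirlingeq}, which writes $\Gamma(z)=\sqrt{2\pi}\,z^{z-1/2}\exp(-z)\exp(\mu(z))$ and therefore expresses $\log\Gamma(z)$, modulo the error $\mu(z)$, as an algebraic combination of values of $\exp$ and $\log$; if the contribution of $\mu$ could be controlled (via, say, its Binet integral representation), a reduction to Schanuel's conjecture applied to the tuple $(z_i,\log z_i, z_i\log z_i)$ might in principle yield the desired bound. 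In practice the transcendence properties of $\mu$ are not governed by any known theorem, and they interact with $\pi$ and with values of $\exp$ in ways that no current tool handles. Combined with the fact that even the base case at algebraic inputs is largely open, this is why I expect the genuine advance needed here is the formulation and proof of a suitable functional transcendence statement for $\Gamma$, and why a complete proof of Conjecture \ref{conj:schanuelgamma} appears to lie well beyond the reach of present methods.
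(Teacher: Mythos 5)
This statement is not a theorem of the paper but one of its conjectures: it is presented (following suggestions of Kirby) as a $\Gamma$-analogue of Schanuel's conjecture, and the paper offers no proof of it, only motivation and the companion functional statement, the weak Ax--Schanuel conjecture for $\Gamma$ (Conjecture~\ref{conj:weakas}). Your assessment is therefore essentially the correct one, and your proposed route --- first establish a functional transcendence statement for $\Gamma$, then try to descend to the arithmetic statement --- matches the paper's own framing, which likewise singles out the functional version as the potentially more tractable target. Two caveats. First, your inductive-step heuristic overstates the strength of the reduction: even in the exponential setting, where Ax--Schanuel is a theorem of Ax, Schanuel's conjecture does not follow from it by any known specialization argument, so proving Conjecture~\ref{conj:weakas} would give evidence and guidance but not, by itself, Conjecture~\ref{conj:schanuelgamma}; the paper is careful to present the two as separate conjectures. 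Second, your base-case discussion could note that for $n=1$ the hypothesis is vacuous and the statement as literally written already fails at positive integers (e.g.\ $z=3$, $\Gamma(3)=2$), so the conjecture is implicitly read with rational (or at least integer) arguments excluded, in keeping with its stated purpose of ruling out relations coming from the difference equations; this is a point about the formulation rather than about your strategy. With those qualifications, there is no ``paper proof'' to diverge from, and your write-up is a reasonable account of why the statement is open.
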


\subsection{Geometric properties of \texorpdfstring{$\Gamma$}{Gamma}}

We warn the reader that the terminology defined in this section will be seldom used later. 
Instead, the conjectures presented here are intended to provide context and motivation for the main result.

We recall that the general problem one would like to solve is to find a minimal set of geometric conditions ensuring that an algebraic variety $V$ has a Zariski dense intersection with the graph of some transcendental function $f$. 
To find the geometric conditions for a given function, in our case $\Gamma$, one needs to understand the functional properties of the function. 
For example, the complex exponential function satisfies $\exp(a+b)= \exp(a)\exp(b)$ for all $a,b\in\CC$. 
This property then has the following consequence: if $L$ is any $\mathbb{Q}$-linear subspace of $\CC^n$ and $\mathbf{a}$ is any point in $\CC^n$, then $L+\mathbf{a}$ is an algebraic subvariety of $\CC^n$ with the property that $\exp(L+\mathbf{a})$ is also an algebraic variety, specifically, $\exp(L+\mathbf{a})$  is a coset of a subtorus of $\left(\CC^\times\right)^n$. 
Borrowing terminology from the world of unlikely intersections, these types of varieties are the \emph{weakly special} varieties of the exponential function. 
In the case of $\Gamma$, the difference equations inform what some of those varieties are. 
We formulate this more precisely with the following definition.

\begin{defn}
    Let $W\subseteq\CC^n$ be an algebraic subvariety. We say that $W$ is $\Gamma$-\emph{weakly special} if
    \begin{itemize}
        \item $W = \CC^n$, or
        \item $W$ is a proper subvariety of $\CC^n$ and the set
    \begin{equation}
    \label{eq:gamaspecial}
        \{(\mathbf{z},\Gamma(\mathbf{z})) : \mathbf{z}\in W\}
    \end{equation}
    is contained in a proper subvariety of $W\times\CC^{n}$. In this case we write $\Gamma_W$ to denote the Zariski closure of (\ref{eq:gamaspecial}) in $\CC^{2n}$. 
    \end{itemize}
    
    We say that $W$ is $\Gamma$-\emph{special} if
    \begin{itemize}
        \item it is weakly special, and
        \item if $W$ is a proper subvariety of $\CC^n$, then both $W$ and $\Gamma_W$ are defined over $\overline{\mathbb{Q}}$. 
    \end{itemize}
    In particular $\CC^n$ is $\Gamma$-special. 
\end{defn}
We remark that unlike the weakly special subvarieties associated with exponential or automorphic functions, the $\Gamma$-weakly special varieties do not have the \emph{bi-algebraicity property}, that is, that $W$ is $\Gamma$-weakly special \textbf{does not} imply that $\Gamma(W)$ is also an algebraic variety. See Example \ref{ex:gammaspecial}.

When we work with subvarieties of $\CC^{2n}$ with the purpose of looking at their possible intersection with the graph of $\Gamma$, we will always think of these varieties as defined by polynomials in the ring $\CC[X_1,\ldots,X_n,Y_1,\ldots,Y_n]$, where the $X_i$ represent the first $n$ coordinates of $\CC^{2n}$, and the $Y_i$ represent the last $n$ coordinates. 

We start by considering some trivial examples of $\Gamma$-weakly special varieties.

\begin{ex}
    Let $W$ be the diagonal of $\CC^{n}$, that is, $W$ is defined by
    \begin{equation*}
        X_1 = X_2=\cdots =X_n.
    \end{equation*}
    Then $W$ is $\Gamma$-special. Indeed, in this case $\Gamma_W$ is simply the subvariety of $\CC^{2n}$ defined by
    \begin{equation*}
        X_1 = X_2=\cdots =X_n \quad\mbox{ and }\quad Y_1=Y_2=\cdots=Y_n.
    \end{equation*}
    We remark this example is suitable for any (set-theoretic) function.
\end{ex}

\begin{ex}
\label{ex:weaksppts}
    Any point $\mathbf{z}\in\CC^n$ is trivially $\Gamma$-weakly special because its image $\Gamma(\mathbf{z})$ is a 0-dimensional algebraic variety. 
    Like in the previous example, this is just a property of functions.

    It would be really interesting to understand the $\Gamma$-special points. 
    Since $\Gamma(1)=1$ and we have (\ref{eq:diffeq1}), we know that every positive integer $n$ is $\Gamma$-special because $\Gamma(n)=(n-1)!$. 
    It seems to be unknown if there are any other $\Gamma$-special points.\footnote{For example, it seems to be unknown whether $\Gamma\left(\frac{1}{5}\right)$ is transcendental over $\mathbb{Q}$, although this is predicted by Conjecture~\ref{conj:lang-rohrlich}.}
\end{ex}

Now we present an example that is more particular to $\Gamma$.
\begin{ex}
\label{ex:gammaspecial}
    The variety $W\subset\CC^{2}$ defined by the equation $X_2 = X_1+1$ is $\Gamma$-special since $\Gamma_W$ is then contained in the variety defined by
    \begin{equation}
    \label{eq:specialdiffeq}
        X_2 = X_1+1\quad\mbox{ and }\quad Y_2 = X_1Y_1.
    \end{equation}
    The equation (\ref{eq:diffeq1}) shows that $\Gamma_W$ is contained in (\ref{eq:specialdiffeq}), and hence $W$ is at least $\Gamma$-weakly special. 
    But since $\Gamma$ is a transcendental function, the Zariski closure of the graph $\left\{(z,\Gamma(z)) : z\in\CC\setminus\mathbb{Z}^{\leq 0}\right\}$ has dimension 2.
    Therefore, $\Gamma_W$ must have dimension at least 2. 
    Since (\ref{eq:specialdiffeq}) defines a variety of dimension 2 and is defined over $\mathbb{Q}$, this shows that $W$ is $\Gamma$-special and $\Gamma_W$ is given by (\ref{eq:specialdiffeq}).
    
    We point out, however, that the image of $W$ under $\Gamma$ is not an algebraic variety. 
    Indeed, $\Gamma(W) = \{(\Gamma(z),z\Gamma(z)) : z\in\CC\}$, which shows that $\Gamma(W)$ has complex analytic dimension 1. 
    Since the functions $\Gamma(z)$ and $z$ are algebraically independent over $\CC$, $\Gamma(W)$ cannot be contained in an algebraic curve.
\end{ex}

The difference equations (\ref{eq:diffeq2}) and (\ref{eq:diffeq3}) involve the sine and exponential functions, and as such they do not describe algebraic geometric properties of $\Gamma$, or in other words, these symmetries of $\Gamma$ cannot be witnessed only using algebraic varieties the way we did with (\ref{eq:diffeq1}) in the last example.

It should be noted that unlike the difference equations satisfied by periodic functions, such as the exponential maps of semi-abelian varieties or the uniformization maps of Shimura varieties, the equation (\ref{eq:diffeq1}) involves both the argument and the values of the function. 
Because of this, our definition of $\Gamma$-weakly special is about subvarieties of the domain of $\Gamma$, whereas for the other functions we have mentioned, the definition of weakly special varieties is about subvarieties of the codomain of the function. 

The reason behind our interest in $\Gamma$-weakly special varieties is that their presence may prevent algebraic varieties from ever intersecting the graph of $\Gamma$, as the following examples illustrate.

\begin{ex}
    Define $V\subseteq\CC^4$ as 
    \begin{equation*}
        X_2 = X_1\quad\mbox{ and }\quad Y_2 = Y_1+1.
    \end{equation*}
    then $V$ cannot have points in the graph of any function.
\end{ex}

\begin{ex}
Define $V\subseteq\CC^4$ as 
    \begin{equation*}
        X_2 = X_1+1\quad\mbox{ and }\quad Y_2 = X_1Y_1+1.
    \end{equation*}
    then $V$ cannot have points in the graph of $\Gamma$ as that would contradict (\ref{eq:diffeq1}).
\end{ex}

In view of the Lang--Rohrlich conjecture or Conjecture~\ref{conj:schanuelgamma}, it is not expected that one can find any other $\Gamma$-weakly special varieties. 
Both Lang--Rohrlich and Schanuel's conjecture remain open, but in the case of $\exp$ we do have a complete understanding of its weakly special varieties. 
This is due to Ax's theorem \cite[Theorem 3]{ax} (also known as the Ax--Schanuel theorem), although only a weaker statement is needed (sometimes called the Ax--Lindemann--Weierstrass theorem). 
Similarly, we understand the weakly special varieties of any function for which an Ax--Schanuel theorem is known. 
We then make the following conjecture, which (being functional) could be more tractable than the other conjectures we have mentioned.
\begin{conj}[weak Ax--Schanuel for $\Gamma$]
\label{conj:weakas}
    Let $f_1,\ldots,f_n$ be holomorphic functions such that for every $i,j\in\{1,\ldots,n\}$, with $i\neq j$, we have $f_i\notin \QQ f_j + \CC$. Then 
    \begin{equation*}
        \mathrm{tr.deg.}_{\CC}\CC(\mathbf{f},\Gamma(\mathbf{f}))\geq n + 1.
    \end{equation*}
\end{conj}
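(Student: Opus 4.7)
Since the statement is posed as an open conjecture (a functional analogue of Schanuel's conjecture for $\Gamma$), what follows is necessarily a research plan rather than a proof. I would attempt an induction on $n$. The base case $n=1$ is vacuous in the pairwise hypothesis but requires non-constancy of $f_1$ (which is automatic for $n \geq 2$ since any constant lies in $\CC \subseteq \QQ f_j + \CC$); under this assumption the bound $\mathrm{tr.deg.}_\CC \CC(f_1, \Gamma(f_1)) \geq 2$ reduces to $\Gamma(z)$ being transcendental over $\CC(z)$, a classical consequence of H\"older's theorem that $\Gamma$ satisfies no algebraic differential equation.

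For the inductive step, assume the statement for $n-1$ holomorphic inputs and suppose for contradiction that $\mathrm{tr.deg.}_\CC \CC(\mathbf{f}, \Gamma(\mathbf{f})) \leq n$. Then there exists a non-trivial polynomial relation $P(\mathbf{f}, \Gamma(\mathbf{f})) = 0$. The plan is to exploit the difference equation $\Gamma(z+1) = z\Gamma(z)$: shifting the inputs by integer vectors $\mathbf{k}$ produces an infinite family of polynomial relations $P_{\mathbf{k}}(\mathbf{f}, \Gamma(\mathbf{f})) = 0$, since each $\Gamma(f_i + k_i)$ is polynomial in $f_i$ and $\Gamma(f_i)$. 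Using the super-polynomial growth of $\Gamma$ controlled by Stirling, one would try to show that such a family can only be simultaneously consistent when some linear relation $f_i = q f_j + c$ with $q \in \QQ, c \in \CC$ holds, contradicting the hypothesis.

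A complementary strategy is to work inside an o-minimal or pfaffian framework. Stirling's formula gives
$$\log \Gamma(z) = \bigl(z - \tfrac{1}{2}\bigr)\log z - z + \tfrac{1}{2}\log(2\pi) + \mu(z), \qquad \mu(z) \to 0,$$
so $\Gamma(f_i)$ is, up to a vanishing multiplicative error, an elementary function of $f_i$, $\log f_i$, and $\exp(f_i)$. An Ax--Schanuel theorem in a suitable pfaffian closure of $\CC(z, \log z, \exp z)$, applied to the Stirling main term and then bootstrapped to $\Gamma$ via control of $\mu$, should in principle yield the conclusion.

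The main obstacle is precisely H\"older's theorem: the absence of an algebraic differential equation for $\Gamma$ rules out a direct application of Ax's derivation-based argument. The difference-equation route would require a difference-algebraic Ax--Schanuel tailored to $\Gamma(z+1) = z\Gamma(z)$, which does not currently exist in the needed generality; the Stirling route requires uniformly transferring a functional-transcendence conclusion across the non-algebraic perturbation $\mu$ and past the branch/monodromy issues for $\log z$, which is delicate already in one variable. I expect this last point -- the passage from the Stirling approximation back to $\Gamma$ itself -- to be the decisive obstruction and the place where any serious attack would stand or fall.
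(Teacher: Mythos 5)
This statement is presented in the paper as an open conjecture: the authors explicitly say that no functional transcendence (Ax--Schanuel type) statement is known for $\Gamma$, attribute the formulation to suggestions of Jonathan Kirby, and offer no proof --- it serves only as motivation for Conjecture~\ref{conj:ecgamma}. So there is no argument of the authors' to compare yours against, and you are right to present your text as a research plan rather than a proof. The one step you actually establish is the base case, and your parenthetical remark there is the important one: as literally stated, the $n=1$ case has a vacuous hypothesis, so a constant $f_1$ (e.g.\ $f_1\equiv 1$) has transcendence degree $0$; the intended reading must exclude constants, and with that reading the base case reduces, via the open mapping theorem and analytic continuation, to the classical fact that $\Gamma$ is transcendental over $\CC(z)$, as you say.

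Beyond that, every step of your plan delegates the real difficulty to a tool that does not exist, so what remains is a genuine gap rather than a proof. In the difference-equation route, the decisive claim --- that the family of relations $P_{\mathbf{k}}(\mathbf{f},\Gamma(\mathbf{f}))=0$ obtained by shifting through $\Gamma(z+1)=z\Gamma(z)$ can only be simultaneously consistent if some $f_i\in\QQ f_j+\CC$ --- is essentially a restatement of the conjecture; no mechanism is given for extracting the $\QQ$-linear relation, and Stirling growth alone cannot do it, since integer shifts only modify a putative relation by rational-function factors and growth comparisons are insensitive to, say, $f_2=f_1^2$ versus $f_2=2f_1$. In the Stirling route, $\Gamma(f_i)=\sqrt{2\pi}\,\exp\bigl((f_i-\tfrac{1}{2})\log f_i-f_i+\mu(f_i)\bigr)$ is a nested exponential of an \emph{arbitrary} holomorphic $f_i$, so o-minimal or pfaffian Ax--Schanuel technology does not apply directly: known Ax--Schanuel theorems concern fixed uniformizing maps with group-theoretic, bi-algebraic structure, which $\Gamma$ lacks (the paper's Example~\ref{ex:gammaspecial} shows the $\Gamma$-weakly special sets are not bi-algebraic); moreover $\mu=\log\Gamma$ minus a differentially algebraic main term is itself differentially transcendental, so ``bootstrapping past $\mu$'' is of the same order of difficulty as the original statement. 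H\"older's theorem, as you note, rules out the derivation-based Ax argument. In short, your proposal is a sensible survey of the obstructions, consistent with the paper's framing of the statement as open, but it does not close any of them.
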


\begin{defn}
    A subvariety $V\subset\CC^{2n}$ is said to be $\Gamma$-\emph{free} if it is not contained in any proper subvarieties of the form $\Gamma_W$, where $W$ is $\Gamma$-weakly special.
\end{defn}
As we mentioned in Example~\ref{ex:weaksppts}, every point is $\Gamma$-weakly special, so the definition of a $\Gamma$-free variety in particular implies that $V$ has no constant coordinates.

The weak Ax--Schanuel conjecture for $\Gamma$ tells us that if $V$ has no constant coordinates and it is not contained in any subvariety of $\CC^{2n}$ of the form $X_i + a_1 X_j + a_2=0$, where $a_1,a_2\in\QQ$, then $V$ is $\Gamma$-free. 

Let $n,\ell$ be positive integers with $\ell \leq n$ and $\mathbf{i}=(i_1,\ldots,i_\ell)$ in $\mathbb{N}^{\ell}$ with $1\leq i_1 < \ldots < i_\ell \leq n$. 
Define the projection map $\mathrm{pr}_{\mathbf{i}}:\CC^{n} \rightarrow \CC^{\ell}$ by $\mathrm{pr}_{\mathbf{i}}(x_1,\ldots,x_n):= (x_{i_1},\ldots,x_{i_\ell})$.

\begin{rmk}
It is easy to see that in the examples we have given above where $W$ is a $\Gamma$-weakly special subvariety of $\CC^{n}$, for any choice of indices $1\leq i_{1}<\cdots<i_{\ell}\leq n$ we have that $\mathrm{pr}_{\mathbf{i}}(W)$ is also a $\Gamma$-weakly special subvariety of $\CC^{\ell}$. 
\end{rmk}

Define $\mathrm{Pr}_{\mathbf{i}}:\CC^{2n}\to \CC^{2\ell}$ by $\mathrm{Pr}_{\mathbf{i}}(\mathbf{x},\mathbf{y}):= (\mathrm{pr}_{\mathbf{i}}(\mathbf{x}),\mathrm{pr}_{\mathbf{i}}(\mathbf{y}))$.
The following definition is taken from \cite{aek-differentialEC}.
\begin{defn}
An algebraic set $V \subseteq \CC^{2n}$ is said to be \emph{broad} if for any $\mathbf{i}=(i_1,\ldots,i_{\ell})$ in $\mathbb{N}^{\ell}$ with $1\leq i_1 < \cdots < i_{\ell} \leq n$ we have $\dim \mathrm{Pr}_{\mathbf{i}} (V) \geq \ell$. 
\end{defn}

In particular, if $V$ is broad then $\dim V\geq n$. We now formulate a possible answer to the problem of finding geometric conditions ensuring that $V$ has a Zariski dense intersection with the graph of $\Gamma$. 

\begin{conj}
\label{conj:ecgamma}
    Let $n$ be a positive integer and let $V\subseteq\CC^{2n}$ be an irreducible subvariety which is broad and is not contained in any subvariety of the form $X_i + a_1 X_j + a_2=0$, where $a_1,a_2\in\QQ$. 
    Then the intersection of $V$ with the graph of $\Gamma$ is Zariski dense in $V$.
\end{conj}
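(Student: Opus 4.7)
The plan is to adapt and extend the Rouché-theorem strategy of Theorem~\ref{thm:ecforgamma}, combined with a reduction that peels the conjecture down to the already-proved dominant case.

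First I would make a dimension reduction: if $\dim V > n$, replace $V$ by a generic hyperplane section of the form $\{L(X_1,\ldots,X_n,Y_1,\ldots,Y_n)=c\}$ for a generic linear form $L$ and constant $c$. Since the forbidden affine family $\{X_i+a_1X_j+a_2=0 : a_1,a_2\in\QQ\}$ is countable and broadness is open in a suitable sense, the section remains irreducible, broad, and $\Gamma$-free. Iterate until $\dim V = n$. If in the process one reaches a $V$ with $\dim\pi_1(V)=n$, apply Theorem~\ref{thm:ecforgamma} and we are done; otherwise, assume $\dim V = n$ and $\dim \pi_1(V) = m < n$.

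Next I would convert broadness into a usable parametrization. Because $V$ is broad of dimension $n$ but $\pi_1(V)$ has dimension $m<n$, there must be indices $I\subseteq\{1,\ldots,n\}$ with $|I|=m$ such that the $X_i$ for $i\in I$ together with a suitable subset of the $Y_j$ serve as local parameters on $V$. Ordering coordinates so that $I=\{1,\ldots,m\}$, a smooth point of $V$ yields local holomorphic expressions $X_{m+1},\ldots,X_n$ and $Y_1,\ldots,Y_n$ as algebraic functions of $Z_1,\ldots,Z_m$ and $W_1,\ldots,W_{n-m}$, where the $W_k$ play the role of the extra $Y$-coordinates needed to parametrize $V$. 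Finding an intersection of $V$ with the graph of $\Gamma$ then becomes a system
\begin{equation*}
\Gamma(z_i)=F_i(z_1,\ldots,z_m,w_1,\ldots,w_{n-m}),\qquad i=1,\ldots,n,
\end{equation*}
where the $F_i$ are (multivalued) algebraic, and $z_{m+1},\ldots,z_n$ are themselves algebraic functions of $(z_1,\ldots,z_m,w_1,\ldots,w_{n-m})$. I would then run a Rouché argument iteratively on the coordinates in the spirit of Proposition~\ref{prop:analyticec}: push the free parameters $z_1,\ldots,z_m$ into $Q_{\alpha,0}$ with large real parts, use Stirling's inequality (\ref{eq:stirlingineq}) together with Proposition~\ref{ConstModulusCurves}, Lemma~\ref{ArgRateOfChange} and Corollary~\ref{GammaArgCycle} to locate small disks where $\Gamma(z_i)$ cycles rapidly around the target $F_i$, and then apply Rouché coordinate by coordinate to produce a zero of $\Gamma(\mathbf{z})-F(\mathbf{z},\mathbf{w})$. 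Running the construction on an unbounded family of base points would then produce a Zariski dense subset of solutions.

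The hard step, and where genuinely new input is required, is the coordinate-by-coordinate Rouché step when $\dim \pi_1(V)<n$. In Theorem~\ref{thm:ecforgamma} one has the freedom to let each $\Re(z_i)\to+\infty$ independently, which is precisely what makes the exponential growth of $|\Gamma(z_i)|$ dominate the algebraic competitor. Under mere broadness some of the $z_i$ are algebraically tied to others, and one must rule out the scenario where the chained algebraic relations conspire to cancel this dominance. The only algebraic obstruction is the shift relation $z_i=z_j+q$ with $q\in\QQ$ (which encodes (\ref{eq:diffeq1})), and the freeness hypothesis of the conjecture is exactly designed to forbid it; however, promoting this into an effective estimate seems to require a functional transcendence input of Ax--Schanuel type for $\Gamma$, i.e.\ a strong form of Conjecture~\ref{conj:weakas}. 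Without such an input I do not see how to close the Rouché estimate in full generality, which is why I expect this step to be the main obstacle, and why the conjecture is stated as a conjecture rather than a theorem.
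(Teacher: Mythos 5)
The statement you are trying to prove is not proved in the paper at all: it is stated as Conjecture~\ref{conj:ecgamma}, and Theorem~\ref{thm:ecforgamma} is presented only as evidence for it, so there is no proof to compare yours against. Your proposal, by your own admission, is not a proof either: after the reduction you arrive exactly at the genuinely open case $\dim V=n$ with $\dim\pi_1(V)=m<n$, and there you state that you cannot close the Rouch\'e estimate without an Ax--Schanuel-type input for $\Gamma$ (a strong form of Conjecture~\ref{conj:weakas}), which is itself unproven. That is the gap, and it is essential rather than technical: the entire mechanism of the paper's proof of Theorem~\ref{thm:ecforgamma} depends on being able to send the real parts of $z_1,\dots,z_n$ to $+\infty$ independently, so that the exponential growth of each $|\Gamma(z_k)|$ along the curves $K(\beta_k,R_k)$ eventually dominates the polynomially bounded $|A_k(\mathbf{z})|$ via (\ref{eq:algfunasympt}); once some $z_i$ are algebraically tied to the others, no such domination is available, and your assertion that ``the only algebraic obstruction is the shift relation'' is not something you can use --- it is precisely the content of the unknown functional transcendence statement, not a fact.

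Two smaller points in the reduction also need care. First, the hyperplane-section step: generic sections reduce $\dim V$ but never increase $\dim\pi_1(V)$, so the branch ``if in the process one reaches $\dim\pi_1(V)=n$'' can only occur if it held already; and the claim that broadness, irreducibility, and $\Gamma$-freeness all survive generic sections is asserted (``open in a suitable sense'') rather than argued --- the references the paper uses (\cite{vahagn:adequate}, \cite{genericsol}) handle the dominant case, and preserving broadness is a separate lemma you would have to supply. Second, the parametrization step extracting local algebraic coordinates $(z_1,\dots,z_m,w_1,\dots,w_{n-m})$ from broadness is plausible at a smooth point but is again only sketched. None of this matters for the verdict, though: you have correctly located where the difficulty lives, and your honest conclusion --- that this is why the statement is a conjecture --- is exactly the paper's own position.
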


The conditions in Theorem~\ref{thm:ecforgamma} require that $\dim \pi_1(V)=n$ and that $V$ have no constant coordinates. 
This implies that $V$ is broad and that it is not contained in a $\Gamma$-special variety of the form presented in Example~\ref{ex:gammaspecial}. 
Thus, Theorem~\ref{thm:ecforgamma} provides evidence for Conjecture~\ref{conj:ecgamma}.

\section{The case of Plane Curves}
\label{sec:n=1}

In this section we will focus on the case $n=1$ of Theorem~\ref{thm:ecforgamma}. The key part of our method for the proof of Theorem \ref{thm:ecforgamma} is already contained in this case.

\subsection{A general result about differentially transcendental functions}
\label{subsec:difftrans}
\begin{defn}
    A holomorphic function $f$ is \emph{differentially transcendental} over $\CC(z)$ if for every non-negative integer $n$ and every non-constant polynomial $p\in\CC[X,Y_0,Y_1,\ldots,Y_n]$, the function $p\left(z,f(z),f'(z),\ldots,f^{(n)}(z)\right)$ is not identically zero.
\end{defn}

\begin{defn}
    An entire function $f(z)$ is said to have \emph{finite order of growth} if there are $A,B,\rho\in\mathbb{R}^+$ such that for all $z\in\mathbb{C}$ we have
    \begin{equation*}
        |f(z)|\leq A\exp\left(B|z|^\rho\right). 
    \end{equation*}
    We extend this definition and say that a meromorphic function $f$ on $\CC$ with finitely many poles has finite order of growth if there is a non-zero polynomial $h(z)\in\CC[z]$ such that $h(z)f(z)$ is entire and has finite order of growth.  
\end{defn} 

\begin{theorem}
\label{thm:difftrans}
    Let $f(z)$ be a meromorphic function on $\CC$ with finitely many poles and let $p(X,Y)\in\CC[X,Y]$ be an irreducible polynomial which depends at least on $Y$. 
    If $f(z)$ is differentially transcendental over $\CC(z)$ and has finite order of growth, then the set $\{z\in\CC: p(z,f(z))=0\}$ is infinite. 
\end{theorem}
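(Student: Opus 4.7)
\medskip

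The plan is to argue by contradiction: suppose $Z := \{z\in\CC : p(z,f(z))=0\}$ is finite, and let $g(z) := p(z,f(z))$. Then $g$ is meromorphic on $\CC$ with finitely many poles (inherited from the poles of $f$) and, by assumption, finitely many zeros. Since $f$ has finite order of growth, there is a polynomial $h(z)$ such that $h f$ is entire of finite order; then $h(z)^{\deg_Y p}\, g(z)$ is a polynomial expression in $z$ and $hf$, hence entire of finite order, so $g$ itself has finite order of growth in the meromorphic sense.

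The key step is now to invoke Hadamard's factorization theorem. After multiplying $g$ by a monic polynomial $q(z)$ whose zeros cancel its poles, we obtain an entire function of finite order with only finitely many zeros, which must factor as a polynomial times $\exp(P(z))$ for some polynomial $P\in\CC[z]$. Dividing back yields
\[
    g(z) = p(z,f(z)) = R(z)\exp(P(z))
\]
for some rational function $R$ and polynomial $P$. Note $R\not\equiv 0$: otherwise $p(z,f(z))\equiv 0$ would be an order-zero differential-algebraic relation for $f$ over $\CC(z)$, contradicting differential transcendence (and using that $p$ depends on $Y$, so the zero polynomial is excluded).

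Now I would take logarithmic derivatives. Writing $S(z) := R'(z)/R(z) + P'(z)$, which is a rational function, we get $g'(z) = S(z)\,g(z)$. Expanding $g'(z) = p_X(z,f(z)) + p_Y(z,f(z))f'(z)$ and clearing the denominator $b(z)$ of $S = a/b$, we obtain a polynomial identity $\Phi(z,f(z),f'(z))\equiv 0$ for
\[
    \Phi(X,Y_0,Y_1) := b(X)\,p_X(X,Y_0) + b(X)\,p_Y(X,Y_0)\,Y_1 - a(X)\,p(X,Y_0) \in \CC[X,Y_0,Y_1].
\]
To finish, I need $\Phi$ to be a \emph{nonzero} polynomial: this is immediate from looking at the coefficient of $Y_1$, which is $b(X)\,p_Y(X,Y_0)$, nonzero because $b\not\equiv 0$ and because $p_Y\not\equiv 0$ as $p$ depends on $Y$. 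This contradicts the differential transcendence of $f$ over $\CC(z)$.

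The main obstacle I anticipate is verifying the factorization $g = R\exp(P)$ rigorously — specifically, tracking how finite order of growth transfers from $f$ to the composite $p(z,f(z))$ and handling poles via the polynomial multiplier so that Hadamard's theorem applies in its classical entire form. The rest is formal manipulation, with the nondegeneracy check on $\Phi$ being routine once $p_Y\neq 0$ has been noted.
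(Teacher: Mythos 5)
Your proof is correct and takes essentially the same route as the paper: assume finitely many zeros, clear the finitely many poles with a polynomial multiplier, apply Hadamard's factorization to write $p(z,f(z)) = R(z)\exp(P(z))$, and contradict differential transcendence. Your logarithmic-derivative step producing the explicit nonzero polynomial $\Phi(X,Y_0,Y_1)$ simply spells out the paper's closing assertion that the right-hand side is differentially algebraic while the left-hand side cannot be.
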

\begin{proof}
    We proceed by contradiction, so suppose that $\{z\in\CC: p(z,f(z))=0\}$ is finite. 
    Observe that the function $p(z,f(z))$ has finitely many poles, so there exists a non-zero polynomial $h(z)\in\CC[z]$ such that $h(z)p(z,f(z))$ is entire, and since $f$ has finite order of growth, then so does $h(z)p(z,f(z))$.
    Then by Hadamard's factorization theorem (see e.g.~\cite[Chapter~5, Theorem~5.1]{complexanalysis}) we can find a polynomial $q(z)\in\CC[z]$ and a rational function $R(z)\in\CC(z)$ such that 
    \begin{equation*}
        p(z,f(z)) = R(z)\exp(q(z)).
    \end{equation*}
    However, by hypothesis, the left hand side of this equality is differentially transcendental over $\CC(z)$, whereas the right hand side is differentially algebraic over $\CC(z)$. 
\end{proof}

In particular this result immediately applies to the Riemann $\zeta$ function (and other Dirichlet series), and as such it complements \cite[Theorem~1.2]{zetaEC}.
 We will use it to give a proof for $\Gamma$ in the next subsection.

We remark that the proof of Theorem~\ref{thm:difftrans} can be adapted for other functions of finite order of growth, even some that are not differentially transcendental. 
For example, it can be used to obtain an analogous result for the exponential function, as shown in \cite[Corollary~2.4]{marker}

\subsection{The result for \texorpdfstring{$\Gamma$}{Gamma}}

We will now prove the case $n=1$ of Theorem~\ref{thm:ecforgamma}. We give two proofs of this result. The first one uses Theorem~\ref{thm:difftrans}, and on its own does not provide information about the distribution of the solutions. 

The second proof is an explicit method which will allow us to obtain a lower bound for the number of solutions inside balls of a given radius (see Corollary~\ref{DistributionLowerBd}). This proof will illustrate the constructions and methods used in the proof of Theorem~\ref{thm:ecforgamma} in the next section, while avoiding some of the technical complications that arise in higher dimensions which can obscure the general strategy.

\begin{theorem}\label{OneVarEC}
    Let $p(X,Y)\in\CC[X,Y]$ be an irreducible polynomial depending at least on $Y$, which is not in $\CC Y$. Then the set $\{z\in \CC : p(z,\Gamma(z))=0\}$ is infinite.
\end{theorem}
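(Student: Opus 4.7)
I would present two proofs.

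\emph{First proof (via Theorem~\ref{thm:difftrans}).} The plan is to apply Theorem~\ref{thm:difftrans}. Since $\Gamma$ has infinitely many poles while that theorem requires finitely many, I would first transfer the problem to $1/\Gamma$, which is entire. Setting $d:=\deg_Y p$ and
\[
    \tilde p(X,Y) := Y^{d}\, p\!\left(X,\tfrac{1}{Y}\right)\in\CC[X,Y],
\]
one has $p(z,\Gamma(z)) = \Gamma(z)^{d}\tilde p(z,1/\Gamma(z))$, so the zero sets of $p(z,\Gamma(z))$ and $\tilde p(z,1/\Gamma(z))$ coincide off $\ZZ^{\le 0}$. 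A short check using the hypotheses that $p$ is irreducible and $p\notin\CC Y$ shows that $\tilde p$ is irreducible and still has positive $Y$-degree. The Weierstrass product~(\ref{eq:gammaweierstrass}) exhibits $1/\Gamma$ as an entire function of order of growth $1$, and by Hölder's classical theorem $\Gamma$, hence $1/\Gamma$, is differentially transcendental over $\CC(z)$. Theorem~\ref{thm:difftrans} then delivers infinitely many zeros.

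\emph{Second proof (via Rouché).} The plan is to construct solutions one at a time in the upper right quadrant $Q_{\alpha,0}$ using Rouché's theorem together with the analysis of $\Gamma$ from Section~\ref{sec:background}. On any simply connected open subset of $Q_{\alpha,0}$ avoiding the finitely many branch points of the algebraic curve cut out by $p$, I would factor $p(X,Y) = a_d(X)\prod_{j=1}^d (Y-y_j(X))$ and fix one holomorphic branch $y(z)$. For each large $R$, Corollaries~\ref{GammaLimitYtoInfty}--\ref{GammaLimitXtoInfty} together with the intermediate value theorem produce a point $z_0\in Q_{\alpha,0}$ with $\Re(z_0)\ge R$ and $|\Gamma(z_0)|=|y(z_0)|$; travelling along the level curve $C_{|y(z_0)|}$ of Proposition~\ref{ConstModulusCurves}, the rapid rotation of $\arg\Gamma$ (Lemma~\ref{ArgRateOfChange} and Corollary~\ref{GammaArgCycle}) supplies a nearby point $z^*$ with $\Gamma(z^*)=y(z_0)$ and $|z^*-z_0|\to 0$ as $R\to\infty$. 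I would then apply Rouché to
\[
    F(z):=\Gamma(z)-y(z),\qquad G(z):=\Gamma(z)-y(z_0)
\]
on a ball $B(z^*,\epsilon)$ whose radius is chosen slightly larger than $|z^*-z_0|$: the simple zero of $G$ at $z^*$ forces $|G(z)|\gtrsim \epsilon\,|\Gamma'(z^*)|$ on the boundary, whereas $|F(z)-G(z)|=|y(z)-y(z_0)|\lesssim(\epsilon+|z^*-z_0|)\sup_{B}|y'|$. Stirling's formula~(\ref{eq:stirlingineq}), combined with $\Gamma'/\Gamma\sim \log z$, makes $|\Gamma'(z^*)|$ dominate the polynomial quantity $\sup|y'|$ once $R$ is large, so Rouché's inequality is strict and $F$ has a zero in $B(z^*,\epsilon)$; this zero is a solution of $p(z,\Gamma(z))=0$. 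Iterating with $R\to \infty$ produces infinitely many distinct solutions.

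The main obstacle I anticipate is in the second proof: juggling three competing scales — the rotation rate of $\arg\Gamma$ along $C_r$ (which controls how small $|z^*-z_0|$ can be made), the lower bound on $|\Gamma'(z^*)|$, and the upper bound on $|y'|$ — so that a single radius $\epsilon$ controls all three simultaneously and turns Rouché's inequality into a strict one. Single-valuedness of $y$ throughout the argument can be arranged by restricting to a half-plane $\Re z > R_0$ with $R_0$ exceeding all branch points of $p$.
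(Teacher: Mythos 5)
Your first proof is essentially the paper's first proof: differential transcendence of $\Gamma$, passage to the entire function $1/\Gamma$ of order $1$, and an application of Theorem~\ref{thm:difftrans}; your explicit reciprocal polynomial $\tilde p(X,Y)=Y^d p(X,1/Y)$ just spells out the paper's ``clearing denominators'' step.

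Your second proof takes a genuinely different route from the paper's. The paper freezes the algebraic function at a base point $\xi$ and runs the Argument Principle around a \emph{macroscopic} curvilinear quadrilateral $K(\beta,R)$ bounded by two level curves of $|\Gamma|$ and two level curves of $\arg\Gamma$, showing that the image of this contour under $\Gamma-A$ winds once around $0$; no derivatives of $\Gamma$ enter, only the modulus/argument analysis of \S\ref{sec:background}. You instead solve the frozen equation $\Gamma(z^\ast)=y(z_0)$ exactly (using Corollary~\ref{GammaArgCycle} to keep $z^\ast$ within $O((\log x)^{-2})$ of $z_0$) and then perturb by Rouch\'e on a small ball around $z^\ast$. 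This buys a sharper localization of the solutions than the paper's regions of diameter roughly $1$, but at the cost of an analytic burden the paper avoids: the bound $|G(z)|\gtrsim\epsilon\,|\Gamma'(z^\ast)|$ on $\partial B(z^\ast,\epsilon)$ does not follow from simplicity of the zero alone; it requires $\epsilon\,\sup_B|\Gamma''|\ll|\Gamma'(z^\ast)|$, i.e.\ control of $\Gamma''$ (digamma/trigamma asymptotics, or Cauchy estimates on a ball of radius comparable to $1/\log|z^\ast|$), which is standard but not in the paper's toolkit — it does work out precisely because your $\epsilon\sim(\log x)^{-2}$ beats the ratio $|\Gamma''|/|\Gamma'|\sim\log|z|$. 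Also, phrase the final comparison as a ratio rather than as ``Stirling makes $|\Gamma'(z^\ast)|$ dominate the polynomial quantity $\sup|y'|$'': since $\Gamma(z^\ast)=y(z_0)$, one has $|\Gamma'(z^\ast)|\asymp|y(z_0)|\log|z^\ast|$ while $\sup_B|y'|=O\left(|y(z_0)|/|z_0|\right)$ by the Puiseux expansion at infinity; if the chosen branch $y$ decays, $|\Gamma'(z^\ast)|$ is itself small, and the Rouch\'e inequality holds because both sides scale with $|y(z_0)|$, not because $|\Gamma'|$ is absolutely large. With these points made precise your scheme is sound; it is in fact closer in spirit to the paper's Rouch\'e arguments for $\exp$ (Proposition~\ref{prop:ecexp}) and for the main theorem — freeze the algebraic side at a point, then compare — but with a microscopic contour in place of their macroscopic one.
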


\begin{proof}[First proof of Theorem~\ref{OneVarEC}]
    It is well-known that $\Gamma$ is differentially transcendental (see \cite[Chapter~2, \S 2.6]{remmert}). While $\Gamma$ has infinitely many poles (at all non-positive integers), $\frac{1}{\Gamma}$ is an entire function with order of growth 1 (\cite[Chapter~6, Theorem~1.6]{complexanalysis}). So by Theorem~\ref{thm:difftrans} we get that the set $\left\{z\in\CC : h\left(z,\frac{1}{\Gamma(z)}\right)\right\}$ is infinite for every irreducible polynomial $h(X,Y)\in\CC[X,Y]$. By clearing denominators for $\Gamma$, we obtain the result (here we require that the polynomial not be of the form $aY$ for some $a\in\CC)$. 
\end{proof}

Before giving the second proof, we introduce the following definitions.

\begin{defn}
    Given a point $\beta\in Q_{\alpha,0}$ and given $R>|\Gamma(\beta)|$, let $\rho(\beta,R)$ denote the point in $\{z\in Q_{\alpha,0}:\arg(\Gamma(z)) = \arg(\Gamma(\beta))\}$ which lies in the same component as $\beta$, and satisfies $|\Gamma(\rho(\beta,R))|=R$. 

    We also define $T(\beta,R)$ to be the bounded segment between $\beta$ and $\rho(\beta,R)$ along the component of $\{z\in Q_{\alpha,0}:\arg(\Gamma(z)) = \arg(\Gamma(\beta))\}$ which contains $\beta$.
    
    Given two points $\beta_1, \beta_2\in Q_{\alpha,0}$ satisfying $|\Gamma(\beta_1)| = |\Gamma(\beta_2)|$, define $S(\beta_1,\beta_2)$ to be the bounded segment between $\beta_1$ and $\beta_2$ within the curve $C_{|\Gamma(\beta_1)|}$.
    
    We define $K(\beta,R)$ as the simple closed curve defined by the concatenation of the following four segments: $S(\beta,\beta^\ast)$, $T(\beta,R)$, $T(\beta^\ast,R)$, $S(\rho(\beta,R),\rho(\beta^\ast,R))$.\footnote{We recall that the notation $\beta^\ast$ was introduced in Notation \ref{notation:ast}.}
\end{defn}

\begin{figure}[h]
\centering
\includegraphics[width=0.5\textwidth]{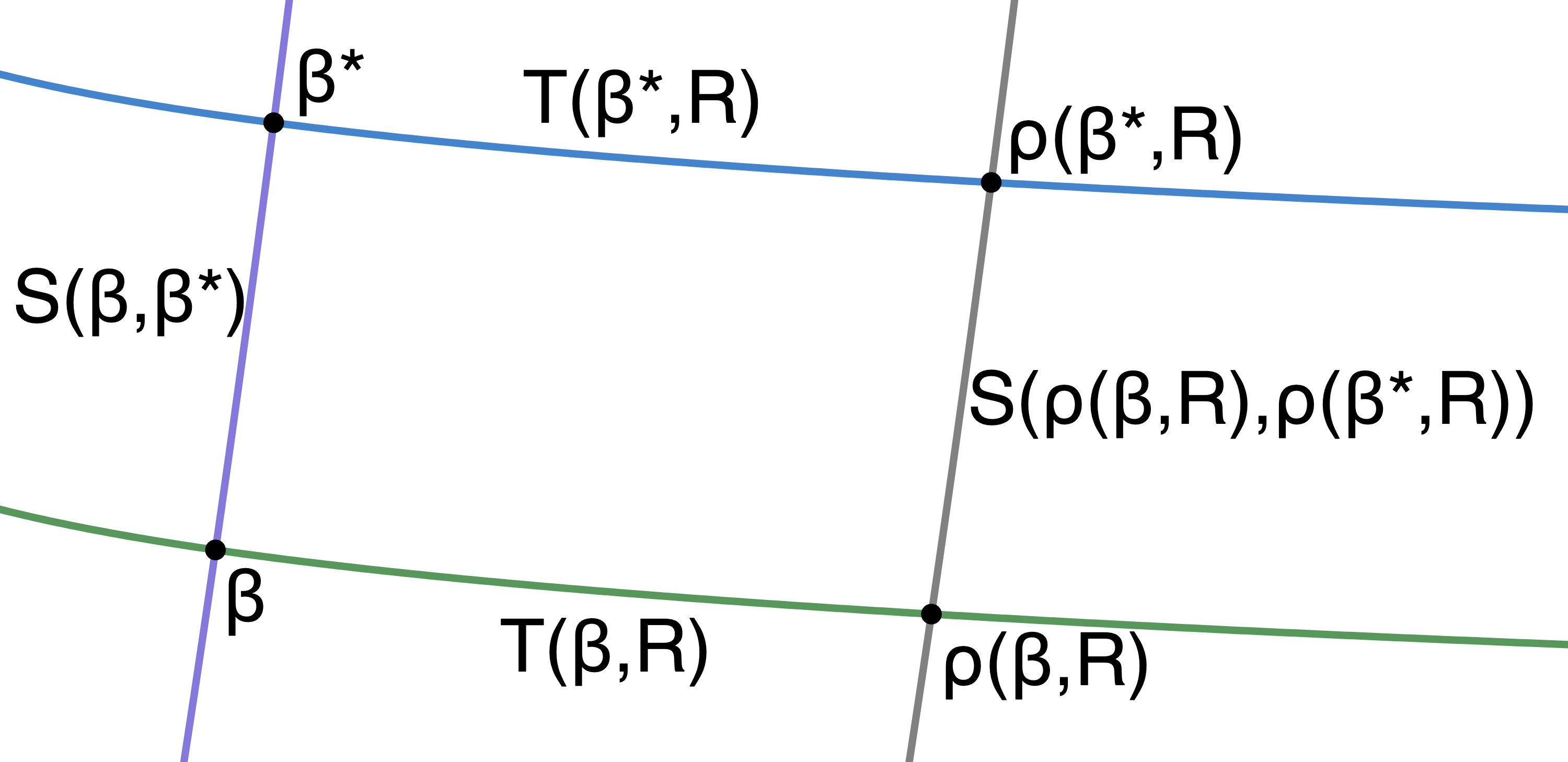}
\caption{Diagram of the curve $K(\beta,R)$.}
\label{fig:Kcurve}
\end{figure}

\begin{proof}[Second proof of Theorem~\ref{OneVarEC}]
    It suffices to prove that the function $\Gamma(z)-A(z)$ has infinitely many zeros, where $A(z)$ is a non-zero algebraic function. 
    We may assume that $A$ is holomorphic on $Q_{\alpha,0}\setminus F$, where $F$ is some compact set (for the basics of algebraic functions in one variable, see \cite[pp.~284--308]{ahlfors}).
    
    Let $D$ be a disc centered at the origin which contains all zeroes and poles of $A(z)$. 
    To prove the theorem, we will use the Argument Principle. 
    Specifically, we will construct simple closed curves $K$ in $Q_{\alpha,0}\setminus D$, so that the winding number of $(\Gamma-A)(K)$ around the origin is 1. 
    Since we are avoiding $D$ and $\Gamma$ has no poles in $Q_{\alpha,0}$, showing that the winding number is nonzero is equivalent in this case to proving that $\Gamma-A$ has a zero in the region bounded by $K$.
     
    Observe now that for $w\in\CC$ given, $\lim_{z\to\infty}\left|\frac{A(z+w)}{A(z)}-1\right|=0$, where $z$ tends to infinity within $Q_{\alpha,0}$, and that the limit is uniform if $w$ is restricted to a compact set. 
    So there is $N$ such that for all $z \in Q_{\alpha,0}$ with $|z|\ge N$ and all $w \in \CC$ with $|w|<3$,
    \begin{equation}
    \label{eq:A(z)/4}
        |A(z+w)-A(z)| < \frac{|A(z)|}{4}.
    \end{equation}
    
    The set $G:=\{z\in Q_{\alpha,0} : z\notin D \wedge \Re(z)\geq\max\{16, N\}\}$ is simply connected. 
    The function $z\mapsto |\Gamma(z)| - \frac{|A(z)|}{4}$ is continuous on $G$.
    By Corollary~\ref{GammaLimitXtoInfty}, this function is positive when $\Im(z)$ is fixed and $\Re(z)$ is taken large enough, and by Corollary~\ref{GammaLimitYtoInfty} it is negative when $\Re(z)$ is fixed and $\Im(z)$ is large enough. 
    So there is $\xi\in G$ such that 
    \begin{equation}
    \label{eq:Gamma(beta_0)}
        |\Gamma(\xi)| = \frac{|A(\xi)|}{4}.
    \end{equation}
    In particular, $A(\xi)\neq 0$ as $\xi\notin D$.
    Let $\xi = x+iy$, where $x,y\in\RR$ and let $r := |\Gamma(\xi)|$.
    For each $z \in S(\xi,\xi^\ast)$, 
    \[
        |\Gamma(z)| = r = |\Gamma(\xi)| = \frac{|A(\xi)|}{4}.
    \]
    Since $x \ge 16$, Corollary \ref{GammaArgCycle} shows that for any $z \in S(\xi,\xi^{\ast})$, we have $|z-\xi| \le \frac{\pi}{(\log \lfloor 16\rfloor-1)^2} < 1$.
    Since also $x \ge N$, (\ref{eq:A(z)/4}) tells us that for any $z \in S(\xi,\xi^{\ast})$, we have $|A(z)-A(\xi)|<r$, i.e., $-A(S(\xi,\xi^\ast))\subset B(-A(\xi),r)$.
    Similarly, for every $z\in S(\xi,\xi^\ast)$, $|\Gamma(z)-A(z) - (-A(\xi))|\leq r + |A(z)-A(\xi)| < 2r$, so $(\Gamma-A)(S(\xi,\xi^\ast))\subset B(-A(\xi),2r)$.
    
    As we move along $S(\xi,\xi^\ast)$, the argument of $\Gamma(z)$ cycles once through $(-\pi,\pi]$, so we may pick $\beta$ in $S(\xi,\xi^\ast)$ such that 
    \[
        \arg\Gamma(\beta) = \arg(-A(\xi)).
    \]
    For any $z \in S(\beta,\beta^{\ast})$, we have $|z-\xi| \le |\beta-\beta^{\ast}| + |\xi-\xi^{\ast}|$, so Corollary~\ref{GammaArgCycle} shows that $|z-\xi| \le 2\frac{\pi}{(\log \lfloor 16\rfloor-1)^2} < 2$.
    So $(\Gamma-A)(S(\beta,\beta^\ast)) \subset B(-A(\xi),2r)$ as well.

    Let $\theta := \arg\Gamma(\beta) = \arg\Gamma(\beta^\ast)$, and let $R := |\Gamma(\xi+1)|$.
    Now we will consider
    \[
        (\Gamma-A)\big(T(\beta,R) \cup T(\beta^\ast,R)\big).
    \]
    By (\ref{eq:A(z)/4}), $-A\big(T(\beta,R) \cup T(\beta^\ast,R)\big) \subset B(-A(\xi),r)$.
    If $z \in T(\beta,R) \cup T(\beta^\ast,R)$, then $(\Gamma-A)(z) \in B(-A(\xi),r) + \Gamma(z)$. 
    We observe that as $z$ moves in $T(\beta,R)$ or $T(\beta^\ast,R)$ from $S(\beta,\beta^\ast)$ to $S(\rho(\beta,R),\rho(\beta^\ast,R))$, the set $B(-A(\xi),r)+\Gamma(z)$ moves away from 0. 
    Indeed, along $T(\beta,R)$ and $T(\beta^\ast,R)$, $|\Gamma(z)|$ strictly increases and $\arg\Gamma(z) = \arg(-A(\xi))$ is fixed.
    So $(\Gamma-A)\big(T(\beta,R) \cup T(\beta^\ast,R)\big)$ is confined to the following region 
    \[
        U := \left\{a+b : a \in B(-A(\xi),r)\wedge \arg b = \theta\wedge r \le |b| \le R\right\}.
    \]

    Next we consider $(\Gamma-A)\big(S(\rho(\beta,R),\rho(\beta^\ast,R))\big)$. 
    The image of $S\big(\rho(\beta,R),\rho(\beta^\ast,R)\big)$ under $\Gamma$ is a circle of radius $R$ centered at 0 (traversed once).
    Since
    \begin{align*}
        R 
            &= |\xi||\Gamma(\xi)|  
                &\\
            &> 12 |\Gamma(\xi)|
                &\text{by choice of $\Re(\xi) \ge 16$} \\
            &=3|A(\xi)|
                &\text{by (\ref{eq:Gamma(beta_0)})} \\
            &> 2 \left(\frac{5}{4}|A(\xi)| \right)
                &\\
            &> 2|A(z)|
                &\text{for all $z \in S\big(\rho(\beta,R),\rho(\beta^\ast,R)\big)$, by (\ref{eq:A(z)/4}}),
    \end{align*}
    the image of $S\big(\rho(\beta,R),\rho(\beta^\ast,R)\big)$ under $\Gamma-A$ is contained in the annulus
    \[
        V:=\left\{z\in\CC : \frac{R}{2} < |z| <\frac{3R}{2}\right\}.
    \]
    Since $R > 3|A(\xi)|$, $V\cap B(-A(\xi),2r)=\emptyset$. 
    See Figure~\ref{fig:region1}: $B(-A(\xi),r)$ is represented by the black disk, it is surrounded by the boundary of $B(-A(\xi),2r)$, the straight lines represent the boundary of $U$, and $V$ is the annulus determined by the two circles centered at $0$.
\begin{figure}[h]
\centering
\includegraphics[width=0.5\textwidth]{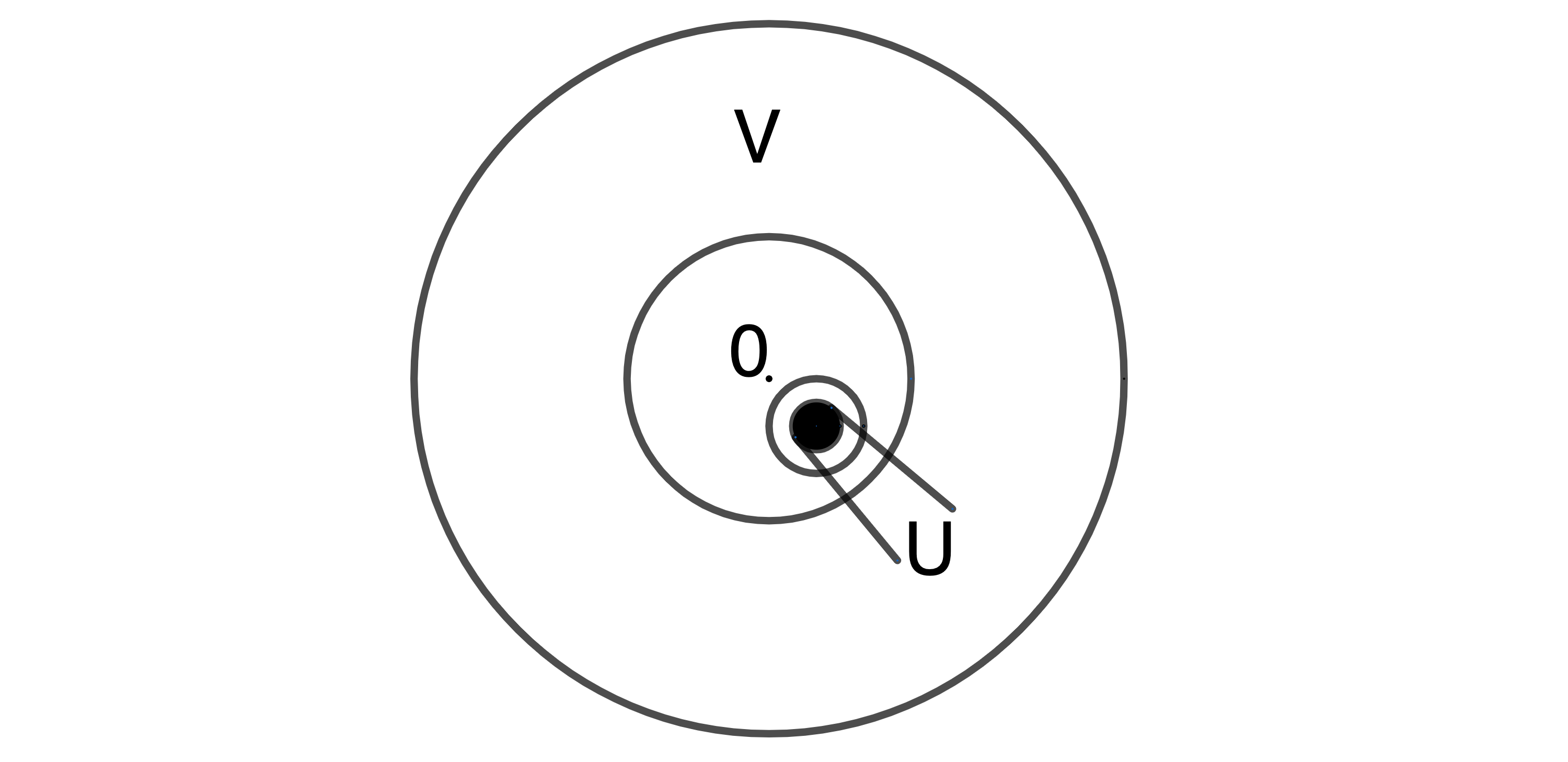}
\caption{Layout of $B(-A(\xi),r)$, $B(-A(\xi),2r)$, $U$ and $V$.}
\label{fig:region1}
\end{figure}
    Furthermore, the inequalities above show that for all $z\in S\big(\rho(\beta,R),\rho(\beta^\ast,R)\big)$, we have that $|\Gamma(z) - (\Gamma-A)(z)| = |A(z)| < \frac{R}{2}$, showing that for all $z\in S\big(\rho(\beta,R),\rho(\beta^\ast,R)\big)$, $(\Gamma-A)(z)$ is in the ball centered at $\Gamma(z)$ of radius $\frac{R}{2}$. 
    
    The image of $K(\beta,R)$ under $\Gamma-A$ must be a closed curve.
    We know that
    \begin{align*}
        (\Gamma-A)\big(S(\rho(\beta,R),\rho(\beta^\ast,R))\big)&\subset V\\
        (\Gamma-A)\big(S(\beta,\beta^\ast)\big) &\subset B(-A(\xi),2r)\\
        (\Gamma-A)\big(T(\beta,R) \cup T(\beta^\ast,R)\big) &\subset U.
    \end{align*}
    We also know that $B(-A(\xi),2r)$ and $U$ are connected sets which do not contain 0. 
    Since for $z \in S\big(\rho(\beta,R),\rho(\beta^\ast,R)\big)$, $\Gamma(z)$ parameterizes the circle of radius $R$ centered at 0 contained in $V$, and $(\Gamma-A)(z)\subset B\left(\Gamma(z),\frac{R}{2}\right)$, 
    we have that $(\Gamma-A)\big(S(\rho(\beta,R),\rho(\beta^\ast,R))\big)$ does make at least most of a turn around 0.
    
    So the winding number of $(\Gamma-A)(K(\beta,R))$ around 0 must be equal to the winding number around 0 of the curve determined by concatenating the boundaries of $B(-A(\xi),r)$ and $U$ with $\partial B\left(0,R\right)$, which is 1.

    So far we have found one zero of $\Gamma-A$. In order to find more, all we need to do is choose a new starting point $\xi'\in G$ not contained in the region bounded by $K(\beta,R)$ and repeat the construction above. 
\end{proof}

\begin{rmk}
    The diameter of the curve $K(\beta,R)$ used in the previous proof is at most $\displaystyle\frac{\pi}{(\log \lfloor \Re(\beta)\rfloor-1)^2}+1$.
\end{rmk}

\begin{cor}
    The set $\{z\in\CC : \Gamma(z) = z\}$ is Zariski dense in $\CC$. 
\end{cor}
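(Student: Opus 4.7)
The plan is to apply Theorem~\ref{OneVarEC} directly with the polynomial $p(X,Y) := Y - X \in \CC[X,Y]$. First I would verify that this choice of $p$ satisfies the hypotheses of the theorem: the polynomial is irreducible (it is degree $1$ and nonconstant in both variables), it depends on $Y$ (the coefficient of $Y$ is $1$), and it is not an element of $\CC Y$ (because of the nonzero $-X$ term). Thus Theorem~\ref{OneVarEC} applies and yields that the set
\begin{equation*}
    \{z \in \CC : p(z,\Gamma(z)) = 0\} = \{z \in \CC : \Gamma(z) - z = 0\} = \{z \in \CC : \Gamma(z) = z\}
\end{equation*}
is infinite.

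The second and final step is to observe that the Zariski topology on $\CC$ (viewed as the affine line $\mathbb{A}^1_{\CC}$) has as its proper closed subsets only the finite subsets of $\CC$. Hence any infinite subset of $\CC$ is automatically Zariski dense, and in particular the set $\{z \in \CC : \Gamma(z) = z\}$ is Zariski dense in $\CC$.

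There is essentially no obstacle here; the corollary is an immediate specialization of Theorem~\ref{OneVarEC} to a concrete polynomial, together with the elementary fact about the Zariski topology on the affine line. If one wished to say more, one could instead invoke the second proof of Theorem~\ref{OneVarEC} to obtain quantitative information: taking $A(z) = z$ (which is algebraic, has no poles, and has its only zero at the origin), the construction produces one solution of $\Gamma(z) = z$ inside each curve $K(\beta,R)$, and hence gives an explicit distribution of solutions as described in Corollary~\ref{DistributionLowerBd}.
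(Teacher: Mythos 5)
Your proposal is correct and follows the paper's proof exactly: the paper likewise obtains the corollary by applying Theorem~\ref{OneVarEC} with the polynomial $p(X,Y)=X-Y$ (the sign difference from your $Y-X$ is immaterial) and using that an infinite subset of $\CC$ is Zariski dense. The extra verification of the hypotheses and the remark on the Zariski topology of $\mathbb{A}^1$ are just the details the paper leaves implicit.
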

\begin{proof}
    Follows immediately from Theorem~\ref{OneVarEC} by considering $p(X,Y):= X-Y$. 
\end{proof}

\begin{rmk}
    In fact, by doing a few more computations, one can show that, under the classical identification $\CC\cong\RR^2$, the set $\{z\in\CC : p(z,\Gamma(z)) = 0\}$ is Zariski dense in $\RR^2$, whenever $p(X,Y)$ satisfies the conditions of Theorem~\ref{OneVarEC}. 
    Indeed, by performing calculations similar to those in the proof of Lemma \ref{Limity'toInfty}, one can obtain the asymptotic slope of the curve $y(x)$ implicitly defined by $|\Gamma(x+iy)| = |A(x+iy)|$. 
    This can then be used to show that no real algebraic curve in $\RR^2$ can intersect the graph of $y(x)$ infinitely many times. 
\end{rmk}

\begin{cor}\label{DistributionLowerBd}
    Let $A(z)$ be a non-constant algebraic function.
    For any $\epsilon > 0$, there is an $R_{\epsilon} \in \RR^+$ such that for all $R > R_{\epsilon}$, there are at least $\frac{2R}{1+2\epsilon}-c$ zeroes of $\Gamma-A$ in the ball $B(0,R)$, where $c$ is a constant depending on $A$.
\end{cor}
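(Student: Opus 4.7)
The strategy is to iterate the construction of the second proof of Theorem~\ref{OneVarEC} and count how many of the resulting enclosing curves $K(\beta,\tilde R)$ fit inside $B(0,R)$. Given $\epsilon>0$, the remark after that proof lets us choose $x_0=x_0(\epsilon)$ so large that any $K(\beta,\tilde R)$ with $\Re(\beta)\geq x_0$ has diameter at most $1+2\epsilon$; we also require $x_0\geq\max\{16,N\}$ and $\overline{B(0,x_0)}\supset D$, so that the hypotheses of the second proof hold.

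Next I would consider the resonance curve
\[
    \mathcal{C} := \{z\in Q_{\alpha,0} : \Re(z)\geq x_0,\ |\Gamma(z)|=|A(z)|/4\}.
\]
A direct application of Stirling's formula~(\ref{eq:stirlingeq}), combined with the polynomial growth bound $|A(z)|=O(|z|^d)$ where $d=\deg A$, shows that for every sufficiently large $y$ there is a unique $x(y)>x_0$ with $x(y)+iy\in\mathcal{C}$, and moreover $x(y)=O(y/\log y)$. Consequently the portion of $\mathcal{C}$ contained in $B(0,R)$ has imaginary-part extent at least $R-c_1(A,\epsilon)$. From this portion I would select starting points $\xi_1,\ldots,\xi_K$ with $\Im(\xi_{k+1})-\Im(\xi_k)=1+2\epsilon$, so that $K\geq \frac{R}{1+2\epsilon}-c_2(A,\epsilon)$. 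For each $\xi_k$, running the construction of the second proof yields a zero $z_k$ of $\Gamma-A$ inside $K(\beta_k,\tilde R_k)$. Since the imaginary-part extent of each $K(\beta_k,\tilde R_k)$ is only $O(\epsilon)$---controlled by Corollary~\ref{GammaArgCycle} bounding $|\beta_k-\beta_k^\ast|$ together with the nearly horizontal slopes from Proposition~\ref{prop:fixarg} along the fixed-argument segments $T$---consecutive regions are strictly disjoint once $x_0$ is large enough, so the zeros $z_1,\ldots,z_K$ are distinct; and $|z_k-\xi_k|\leq 1+2\epsilon$ ensures $z_k\in B(0,R)$ as soon as $R$ exceeds a threshold $R_\epsilon=R_\epsilon(A,\epsilon)$.

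Finally, an analogous construction in the lower right quadrant---most conveniently phrased by applying the construction to the algebraic function $\widetilde A(w):=\overline{A(\bar w)}$ and using the identity $\overline{\Gamma(z)}=\Gamma(\bar z)$ to transfer zeros back to the lower half plane---produces at least $\frac{R}{1+2\epsilon}-c_3(A,\epsilon)$ further zeros of $\Gamma-A$ in $B(0,R)$, giving the stated lower bound $\frac{2R}{1+2\epsilon}-c$. The only non-routine step is the asymptotic analysis of $\mathcal{C}$ used to lower-bound its imaginary-part extent inside $B(0,R)$; this is a standard application of Stirling's formula, and the rest is a careful iteration of the machinery already set up in the second proof of Theorem~\ref{OneVarEC}.
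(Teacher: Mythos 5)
There is a genuine gap in your counting step. You convert the number of zeros into the imaginary-part extent of the resonance curve $\mathcal{C}=\{z: \Re(z)\ge x_0,\ |\Gamma(z)|=|A(z)|/4\}$ inside $B(0,R)$, and you assert this extent is at least $R-c_1(A,\epsilon)$. That assertion is false, and your own asymptotic for $\mathcal{C}$ shows why: writing $|A(z)|\asymp|z|^{d}$ and using \eqref{eq:stirlingineq}, the defining equation forces $x(y)\sim \frac{\pi y}{2\log y}$, so a point $x(y)+iy\in\mathcal{C}$ has modulus $\sqrt{x(y)^2+y^2}=y+\frac{x(y)^2}{2y}(1+o(1))=y+\Theta\bigl(y/(\log y)^2\bigr)$. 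Hence the largest imaginary part attained by $\mathcal{C}\cap B(0,R)$ is $R-\Theta\bigl(R/(\log R)^2\bigr)$, not $R-O(1)$: the curve drifts to the right fast enough that modulus exceeds imaginary part by an unbounded amount. Spacing starting points by $1+2\epsilon$ in imaginary part therefore yields only about $\frac{2R}{1+2\epsilon}-\Theta\bigl(R/(\log R)^2\bigr)$ certified zeros in $B(0,R)$, which does not imply the stated bound $\frac{2R}{1+2\epsilon}-c$ with $c$ a constant. (Two smaller points: uniqueness of $x(y)$ is neither justified nor needed, since $|\Gamma|-|A|/4$ need not be monotone in $x$ along a horizontal line; and $|z_k-\xi_k|\le 1+2\epsilon$ only puts $z_k$ in $B(0,R+1+2\epsilon)$, so you should restrict to $\xi_k$ with $|\xi_k|\le R-(1+2\epsilon)$, an $O(1)$ loss.)

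The paper avoids this by measuring progress in \emph{modulus} rather than imaginary part: from a resonance point $\xi$ it produces the next one $\xi'$ with $|\xi'|>|\xi|$ and $|\xi'-\xi|<1+2\epsilon$, found by an intermediate value argument along the fixed-argument curve $C$ through $\beta^\ast$ between the two points $w_1,w_2\in C$ at distance $1$ from $\beta^\ast$ (comparing $|\Gamma(w_1)|,|\Gamma(w_2)|$ with $|\Gamma(\beta^\ast\mp 1)|$ and using the stability of $|A|$); disjointness of the successive zeros is obtained because the top of $K$ runs along $C$ while the bottom of $K'$ lies on or above $C$. Since the modulus advances by less than $1+2\epsilon$ per new zero, each increment of $1+2\epsilon$ in the radius contributes a zero, and reflection across the real axis doubles the count, giving the constant-deficit bound. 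Your construction can be repaired in the same spirit: keep your global curve $\mathcal{C}$ and your band-disjointness argument, but select the points $\xi_k$ so that consecutive \emph{moduli} (rather than imaginary parts) differ by $1+2\epsilon$; since $\mathcal{C}$ is nearly vertical, this spacing still separates the regions $\Omega_k$, and the count then becomes $\frac{2R}{1+2\epsilon}$ minus a quantity independent of $R$, as required.
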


\begin{proof}
    We will follow the notation introduced in the proof of Theorem~\ref{OneVarEC}.
    Let $N$ and $G$ be as before.
    Let $b \in G$ be a point of least modulus such that 
    \[
        |\Gamma(b)|= \frac{|A(b)|}{4}.
    \]
    Let $\displaystyle c = \frac{2|b|}{1+\frac{2\pi}{(\log\lfloor \Re(b)\rfloor-1)^2}}$.
    Now let $\epsilon > 0$, and let $x = \max\left\{16, \exp\left(\sqrt{\frac{2\pi}{\epsilon}}+1\right)\right\}$.
    Then any $z \in G$ with $\Re(z) \ge x$ satisfies $\displaystyle\frac{2\pi}{(\log \lfloor\Re(z)\rfloor -1)^2} < \epsilon$.
    Starting from $x$ we will find $\xi \in G$ with $\Re(\xi) \ge x$ satisfying 
    \begin{equation}\label{eq:Gamma(beta_0)2}
        |\Gamma(\xi)| = \frac{|A(\xi)|}{4}.
    \end{equation}
    If $|\Gamma(x)| = \frac{|A(x)|}{4}$, then let $\xi = x$.
    If $|\Gamma(x)| < \frac{|A(x)|}{4}$, then let $\xi$ be the nearest point on the horizontal line extending to the right of $x$ satisfying (\ref{eq:Gamma(beta_0)2}), which exists by Corollary~\ref{GammaLimitXtoInfty}. 
    If $|\Gamma(x)| > \frac{|A(x)|}{4}$, then let $\xi$ be the nearest point on the vertical line extending up from $x$ satisfying (\ref{eq:Gamma(beta_0)2}), which exists by Corollary~\ref{GammaLimitYtoInfty}. 
    Moreover, bounds can be computed on how far up or to the right $\xi$ can be from $x$ using equations (\ref{eq:modgammaexpdecrease}) and (\ref{eq:stirlingineq}).
    
    Let $K,\beta^\ast$ be as in the proof of Theorem~\ref{OneVarEC}.
    We will find $\xi' \in G$ with $|\xi'| >|\xi|$ and $|\xi'-\xi| < 1+2\epsilon$ such that
    \[
        |\Gamma(\xi')|= \frac{|A(\xi')|}{4}
    \]
    and the processes in the proof of Theorem~\ref{OneVarEC} starting with $\xi$ and $\xi'$ respectively give distinct zeroes of $\Gamma-A$.
    This shows that increasing the radius $R$ by $1+2\epsilon$ yields at least one zero of $\Gamma-A$ in $B(0,R+1+2\epsilon)\setminus B(0,R)$.
    
    First, our choice of $\xi$ gives $|\beta^\ast-\xi| \le |\beta^\ast - \beta| + |\beta - \xi| < 2\epsilon$.
    Note that for any $w \in G$ with $|w-\beta^\ast|<1$, we have
    \[
        |A(w)-A(\xi)| < \frac{|A(\xi)|}{4}
    \]
    i.e., $\frac{3}{4}|A(\xi)| < |A(w)| < \frac{5}{4}|A(\xi)|$.
    Since $|\Gamma(\beta^\ast)| = \frac{|A(\xi)|}{4}$, we have
    \begin{align*}
        |\Gamma(\beta^\ast-1)| 
            &= \frac{|\Gamma(\beta^\ast)|}{|\beta^\ast-1|} = \frac{|A(\xi)|}{4|\beta^\ast-1|} < \frac{\frac{3}{4}|A(\xi)|}{4} \\
        |\Gamma(\beta^\ast+1)|
            &= |\beta^\ast||\Gamma(\beta^\ast)| = \frac{|\beta^\ast||A(\xi)|}{4} > \frac{\frac{5}{4}|A(\xi)|}{4}.
    \end{align*}
    Now let $\theta = \arg \Gamma(\beta^\ast)$, and consider the component $ C \subset \{z : \arg \Gamma(z) = \theta\}$ passing through $\beta^\ast$.
    Let $w_1$ be the point on $C$ with $\Re (w_1) < \Re(\beta^\ast)$ such that $|w_1-\beta^\ast|=1$, and let $w_2$ be the point on $C$ with $\Re (w_2) > \Re(\beta^\ast)$ such that $|w_2-\beta^\ast| = 1$.
    Since the rate of change of $|\Gamma(z)|$ is larger along curves of fixed argument than along horizontal lines, $|\Gamma(w_1)| < |\Gamma(\beta^\ast-1)|$ and $|\Gamma(w_2)| > |\Gamma(\beta^\ast+1)|$.
    So
    \begin{align*}
        |\Gamma(w_1)|-\frac{|A(w_1)|}{4} &< |\Gamma(w_1)|-\frac{\frac{3}{4}|A(\xi)|}{4} < |\Gamma(w_1)| - |\Gamma(\beta^\ast-1)| < 0 \\
        |\Gamma(w_2)|-\frac{|A(w_2)|}{4} &> |\Gamma(w_2)|-\frac{\frac{5}{4}|A(\xi)|}{4} > |\Gamma(w_2)| - |\Gamma(\beta^\ast+1)| > 0
    \end{align*}
    By the intermediate value theorem, there must be a point $\xi'$ in the segment of $C$ between $w_1$ and $w_2$ such that 
    \[
        |\Gamma(\xi')| = \frac{|A(\xi')|}{4}.
    \]
    Let $K'$ be the curve and $\beta'$ the lower left corner point constructed via the process in the proof of Theorem~\ref{OneVarEC} starting with $\xi'$.
    Then $T(\beta^\ast,\rho(\beta^\ast,R))$, which is the ``top'' of $K$, runs along $C$, while $T(\beta',\rho(\beta',R'))$, which is the ``bottom'' of $K'$, lies on or above $C$.
    So the zeroes of $\Gamma-A$ in the regions bounded by $K$ and $K'$ must be distinct.

    Finally, since $\overline{\Gamma(z)} = \Gamma(\overline{z})$, reflections of the argument of Theorem~\ref{OneVarEC} and the argument above hold in the lower half quadrant of $\CC$.
    So there are actually at least two new zeroes of $\Gamma-A$ inside the ball $B(0,R+1+2\epsilon)$ compared to $B(0,R)$.
\end{proof}

A natural question in Diophantine geometry is whether the intersection of an algebraic variety $V$ with the graph of $\Gamma$ can contain a Zariski dense set of $\Gamma$-special points.\footnote{This question is analogous to the Manin--Mumford and Andr\'e--Oort conjectures, which are now major theorems in arithmetic geometry (see \cite{raynaud,mcquillan} and \cite{pila:andre-oort,pila-shankar-tsimerman,tsimerman:andre--ort}).}
As we have mentioned, we do not have a complete characterization of such points, but at least we know that all positive integers are $\Gamma$-special. 
Since $\Gamma(n)=(n-1)!$, it is a simple matter to show that the intersection of a plane algebraic curve $V$ and the graph of $\Gamma$ cannot have infinitely many integer points due to growth reasons. More generally, we have the following result.

\begin{lemma}
    Let $V\subseteq\CC^{2}$ be an algebraic curve. 
    Then the set $\{x\in\RR^+ : (x,\Gamma(x))\in V\}$ is finite.
\end{lemma}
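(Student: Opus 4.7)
The plan is to write the algebraic curve $V$ as the zero set of an irreducible polynomial $p(X,Y)\in\CC[X,Y]$. If $p$ does not depend on $Y$, then $\{x\in\RR^{+}:p(x)=0\}$ is trivially finite, so we may assume $p$ has degree $d\ge 1$ in $Y$ and write
\begin{equation*}
p(X,Y)=\sum_{i=0}^{d}a_{i}(X)Y^{i},\qquad a_{d}\not\equiv 0.
\end{equation*}
Setting $F(x):=p(x,\Gamma(x))$, the goal is to show the real zero set $Z:=\{x\in\RR^{+}:F(x)=0\}$ is finite, which I would do by ruling out accumulation at $+\infty$, at $0$, and in the interior of $\RR^{+}$.

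First I would control the behavior at infinity. Factoring out $\Gamma(x)^{d}$,
\begin{equation*}
\frac{F(x)}{\Gamma(x)^{d}}=a_{d}(x)+\sum_{i=0}^{d-1}\frac{a_{i}(x)}{\Gamma(x)^{d-i}}.
\end{equation*}
By Stirling (the real-variable specialization of (\ref{eq:stirlingineq})), $\Gamma(x)$ grows faster than any polynomial as $x\to+\infty$, so every term in the sum tends to $0$ while $a_{d}(x)$ either tends to a nonzero constant or to $\pm\infty$. Hence $F(x)\ne 0$ for all sufficiently large $x$, and $Z$ is bounded above. The same dominant-term argument handles $x\to 0^{+}$: the simple pole of $\Gamma$ at $0$ forces $|\Gamma(x)|\to\infty$, so $F(x)\ne 0$ on some interval $(0,\delta)$ and $Z$ does not accumulate at $0$.

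If $Z$ were infinite, it would therefore have an accumulation point $x_{0}$ in the open interval $(0,+\infty)$, on which $\Gamma$ is real analytic and hence so is $F$. By the identity theorem for real-analytic functions, $F\equiv 0$ on $\RR^{+}$. Analytic continuation then gives $p(z,\Gamma(z))\equiv 0$ on the connected domain $\CC\setminus\ZZ^{\leq 0}$, which contradicts the transcendence of $\Gamma$ over $\CC(z)$ (itself a consequence of the differential transcendence recalled in the first proof of Theorem~\ref{OneVarEC}).

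The main obstacle is the growth estimate at $+\infty$: one must show that $a_{d}(x)\Gamma(x)^{d}$ genuinely dominates the lower-order terms $a_{i}(x)\Gamma(x)^{i}$ uniformly for large real $x$, which is where Stirling is essential. The rest is bookkeeping: combining boundedness of $Z$ with real analyticity and the identity theorem produces the contradiction.
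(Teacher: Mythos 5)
Your argument is correct in substance, but it takes a genuinely different route from the paper. The paper's proof is a short definability argument: if $V$ is not defined over $\RR$ then $V(\RR)$ is already finite, and if it is, then since $\Gamma$ restricted to $\RR^+$ is definable in an o-minimal expansion of the real field, the set $\{x\in\RR^+:(x,\Gamma(x))\in V\}$ is definable, hence finite or of positive real dimension, and positive dimension is ruled out by the transcendence of $\Gamma$. You instead work directly with $F(x)=p(x,\Gamma(x))$: superpolynomial growth of $\Gamma$ on $\RR^+$ bounds the real zero set away from $+\infty$, and an accumulation point in $(0,\infty)$ would force $p(z,\Gamma(z))\equiv 0$ on $\CC\setminus\ZZ^{\leq 0}$ by the identity theorem, contradicting the transcendence of $\Gamma$ over $\CC(z)$. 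Your route is more elementary and self-contained (no appeal to o-minimality of restricted $\Gamma$) and gives some explicit control on where zeros can lie; the paper's route is shorter and works verbatim for any function that is real-analytic and o-minimally definable on $\RR^+$. One housekeeping point: $V$ need not be irreducible, but it is a finite union of irreducible curves, so reducing to an irreducible $p$ is harmless provided you say so.

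One step does need repair: your treatment of $x\to 0^+$. Dividing by $\Gamma(x)^d$ only shows $F\ne 0$ near $0$ when $a_d(0)\ne 0$; if $a_d(0)=0$ the comparison is inconclusive. For instance with $p(X,Y)=XY-1$ one has $F(x)=\Gamma(x+1)-1\to 0$ and $F(x)/\Gamma(x)\to 0$ as $x\to 0^+$, so the dominant-term argument yields nothing (the conclusion still holds there, but not for the reason you give). The easy fix is to observe that $F$ is meromorphic at $0$: either $|F|\to\infty$ near $0$, or $F$ extends analytically across $0$ and, being not identically zero, has isolated zeros there; in either case zeros of $F$ cannot accumulate at $0$ unless $F\equiv 0$, which is excluded exactly as in your interior-accumulation case. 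With that adjustment (or by simply folding the point $0$ into the identity-theorem step for the meromorphic function $p(z,\Gamma(z))$ on $\CC$), your proof is complete.
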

\begin{proof}
    If $V$ is not definable over $\RR$, then $V(\RR)$ must be finite, so we only need to consider the case when $V$ is defined over $\RR$. 
    Since the restriction of $\Gamma$ to the positive reals is definable in an o-minimal expansion of $\RR$ \cite{OminGamma}, the set $\{x\in\RR^+ : (x,\Gamma(x))\in V\}$ is either finite or has positive real dimension. 
    It cannot have positive real dimension because $\Gamma$ is a transcendental function. 
\end{proof}

\subsection{Adapting the method}
\label{subsec:discussion}
As mentioned in the introduction, one of our goals is not only to develop a proof of Theorem~\ref{OneVarEC}, but also to have one that can be adapted to other functions, in particular to periodic functions. 
We explain here how to adapt the proof for $\exp$, using Rouch\'e's theorem in place of the argument principle.

\begin{prop}
\label{prop:ecexp}
    Let $p(X,Y)\in\CC[X,Y]$ be an irreducible polynomial depending on $Y$. Then the set $\{z\in\CC: p(z,\exp(z))=0\}$ is infinite.
\end{prop}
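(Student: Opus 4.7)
The plan is to adapt the second proof of Theorem \ref{OneVarEC}, exploiting the much simpler geometry of $\exp$: the level curves of $|\exp(z)|$ are vertical lines and the level curves of $\arg\exp(z)$ are horizontal lines, so the analogue of the curve $K(\beta,R)$ becomes an honest rectangle of width $\log(R/|\exp(\beta)|)$ and height $2\pi$ (one full period of $\exp$). Such a rectangle is mapped by $\exp$ bijectively onto an open annulus with a radial slit removed, which makes it natural to replace the Argument Principle by Rouch\'e's theorem applied to $f(z) := \exp(z) - A(z)$ and $g(z) := \exp(z) - A(\xi)$, where $A$ is a fixed non-zero algebraic function and $\xi$ is a well-chosen basepoint.

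As in Theorem \ref{OneVarEC}, first reduce to showing $\exp - A$ has infinitely many zeros for any non-zero algebraic $A$, which we may assume is holomorphic and nowhere zero on $\{\Re(z) \ge N\}$ for some $N \gg 0$. Enlarging $N$ if necessary, arrange that $|A(z+w) - A(z)| < |A(z)|/4$ whenever $\Re(z) \ge N$ and $|w| \le 2\pi + \log 20$. Since $|\exp(z)|/|A(z)| \to \infty$ as $\Re(z) \to \infty$ while this ratio is bounded or tends to $0$ along vertical lines, the intermediate value theorem gives $\xi$ with $\Re(\xi) \ge N$ and $|\exp(\xi)| = |A(\xi)|/4$. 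Set $R := 5|A(\xi)|$, pick $\beta \in \{\xi+it : t \in [0,2\pi]\}$ such that $\arg\exp(\beta) = \arg(-A(\xi))$, and let $\Omega$ be the open rectangle with corners $\beta$, $\beta+2\pi i$, $\beta + 2\pi i + \log 20$, $\beta + \log 20$.

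Now apply Rouch\'e's theorem on $\partial\Omega$. By construction, $|f - g| = |A(z)-A(\xi)| < |A(\xi)|/4$, while a case check shows $|g| > |A(\xi)|/4$ everywhere on $\partial\Omega$: on the two vertical sides $|\exp(z)|$ equals $|A(\xi)|/4$ or $5|A(\xi)|$, forcing $|g| \ge 3|A(\xi)|/4$ or $|g| \ge 4|A(\xi)|$ respectively; on the two horizontal sides $\arg\exp(z) = \arg(-A(\xi))$, so $\exp(z) = -tA(\xi)$ with $t > 0$, hence $g(z) = -(1+t)A(\xi)$ and $|g| \ge |A(\xi)|$. Therefore $f$ and $g$ have the same number of zeros in $\Omega$. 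But $\exp$ restricted to the interior of $\Omega$ is a bijection onto the open annulus $\{w \in \CC : |A(\xi)|/4 < |w| < 5|A(\xi)|\}$ with the radial ray of argument $\arg(-A(\xi))$ removed, and $A(\xi)$ lies in that set (its argument differs from $\arg(-A(\xi))$ by $\pi$), so $g$ has exactly one zero in $\Omega$. Thus $\exp - A$ has a zero in $\Omega$.

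To produce infinitely many zeros, iterate: each time shift $\xi$ further to the right by more than $\log 20$ (or upward by more than $2\pi$), which yields disjoint rectangles each contributing a new zero of $\exp - A$. The main obstacle is simply the bookkeeping of the Rouch\'e estimates on the horizontal sides of $\partial\Omega$, but these reduce to the observation that there $\exp(z)$ is a positive real multiple of $-A(\xi)$; all of the delicate monotonicity and curvature statements from \S\ref{sec:background} needed in the $\Gamma$ case are replaced by the trivial geometry of vertical and horizontal lines.
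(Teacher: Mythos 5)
Your Rouch\'e set-up is essentially the paper's: compare $f(z)=\exp(z)-A(z)$ with $g(z)=\exp(z)-A(\xi)$ on a period rectangle of height $2\pi$, using a perturbation estimate $|A(z+w)-A(z)|<|A(\xi)|/4$ and the fact that one full period of $\exp$ covers a slit annulus containing $A(\xi)$ (the paper takes width $2$ instead of $\log 20$ and radius $13$ for the perturbation). The genuine gap is in how you locate the basepoint $\xi$. You insist $\xi$ lie in a right half-plane $\{\Re(z)\ge N\}$ with $N$ large, and claim the intermediate value theorem yields $\xi$ there with $|\exp(\xi)|=|A(\xi)|/4$ because $|\exp(z)|/|A(z)|$ is ``bounded or tends to $0$'' along vertical lines. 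Boundedness is not enough: you need the ratio to drop below $1/4$ somewhere in your region, and for branches $A$ that are bounded on the half-plane this never happens. For example $p(X,Y)=Y-2$ gives $A\equiv 2$, and $p(X,Y)=XY-1$ gives $A(z)=1/z$; in both cases, for $\Re(z)\ge N$ one has $|\exp(z)|/|A(z)|\ge e^{N}/\sup_{\Re(z)\ge N}|A(z)|\gg 1/4$, so no admissible $\xi$ exists and the construction never starts. Indeed the solutions of $\exp(z)=1/z$ have real parts tending to $-\infty$, so no right half-plane can capture them. The paper avoids this by working in the complement of a large disc $D$ containing the zeros and poles of $A$, not in a half-plane: along each horizontal line at large height, $e^{x}/|A(x+iy)|$ tends to $0$ as $x\to-\infty$ (exponential decay beats the at-most-polynomial behaviour of an algebraic function) and to $\infty$ as $x\to+\infty$, so the level set $\{|\exp(z)|=|A(z)|/4\}$ is unbounded and one may choose $\xi$ on it with $|\xi|$ arbitrarily large. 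The rectangle and Rouch\'e estimates only need $|\xi|$ large, not $\Re(\xi)$ large, so with this change (and iterating by moving $\xi$ along that unbounded level set rather than ``to the right by $\log 20$'') your argument becomes correct and coincides with the paper's.

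A minor quantitative slip, easily fixed: with $\beta=\xi+it$, $t\in[0,2\pi]$, points of your rectangle can lie as far as $\sqrt{(\log 20)^2+(4\pi)^2}\approx 12.9$ from $\xi$, so the perturbation estimate must be arranged for $|w|\le 13$ (as in the paper), not merely for $|w|\le 2\pi+\log 20\approx 9.3$; otherwise the bound $|f-g|<|A(\xi)|/4$ is not justified on all of $\partial\Omega$.
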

\begin{proof}
We will show that there are infinitely many solutions to the equation $\exp(z)= A(z)$, where $A(z)$ is a non-zero algebraic function. 
Choose a closed disk $D$ centered at 0 of large enough radius, so that it contains all the zeroes and poles of $A(z)$, and let $G$ be the domain obtained by removing $D$ from the domain of $A$. 
As before, there is $N\in\mathbb{R}^+$ such that for all $z\in G$ with $|z|\ge N$ and all $w \in \mathbb{C}$ with $|w|<13$,
    \begin{equation*}
        |A(z+w)-A(z)| < \frac{|A(z)|}{4}.
    \end{equation*}

Choose $\xi = x_1+iy_1\in G$, with $|\xi|>N$, such that $|\exp(\xi)| = \exp(x_1) = \frac{|A(\xi)|}{4}$. 
In particular $A(\xi)\neq 0$. 
Given $z=x+iy\in\mathbb{C}$ and a positive integer $n$, we define $R(z)$ to the be positively oriented rectangle with corners $x+iy, x+2+iy, x+i(y+2\pi)$ and $x+2+i(y+2\pi)$. 
Consider now the rectangle $R(\xi)$ and
choose $\beta=x_1+iy_2$ on the left vertical side of $R(\xi)$ so that $\arg(\exp(\beta)) = \arg(-A(\xi))$. 
We make the following observations.
\begin{enumerate}[(a)]
    \item For every $z\in R(\beta)$ we have that $|z-\xi|< \sqrt{16\pi^2 +4} <13$, so in particular $|A(z)-A(\xi)| < \frac{|A(\xi)|}{4}$ and $|A(z)| \leq |A(z)-A(\xi)| + |A(\xi)| <\frac{5}{4}|A(\xi)|$.
    \item For every $z$ on the left vertical side of $R(\beta)$ we have 
\begin{equation*}
    |\exp(z)-A(\xi)| \geq |\exp(x_1) - |A(\xi)|| = \frac{3}{4}|A(\xi)|.
\end{equation*}
\item For every $z$ on one of the horizontal sides of $R(\beta)$ we have $\arg(\exp(z)) = \arg(-A(\xi))$, so the points $\exp(z)$, $A(\xi)$ and $0$ are collinear and distinct (since $A(\xi)\neq0$), with 0 being in between the other two. 
Therefore $|\exp(z)-A(\xi)| > |A(\xi)|$.
\item For every $z$ on the right vertical side of $R(\beta)$ we have 
    \begin{equation*}
        |\exp(z)|= |\exp(x_1+2+iy)|  
            = \exp(2) |\exp(\xi)|
            > 4|\exp(\xi)|
            =|A(\xi)|.
    \end{equation*}
\end{enumerate}
Overall, we have shown that for every $z\in R(\beta)$ we have  
\begin{equation*}
    |\exp(z)-A(\xi)| > \frac{|A(\xi)|}{4}\geq |A(z)-A(\xi)|.
\end{equation*}
So by Rouch\'e's theorem we conclude that the number of zeroes of $\exp(z)-A(z)$ inside $R(\beta)$ equals the number of zeroes of $\exp(z)-A(\xi)$ inside of $R(\beta)$, which is 1 (since $A(\xi)$ is non-zero). 
Repeating the argument for different choices of $\xi$ gives the infinitely many solutions.
\end{proof}

Combined with the methods of the following section, this proof can be adapted to obtain an analogue of Theorem \ref{thm:ecforgamma} for $\exp$, thus giving yet a new proof of \cite[Proposition 2]{brownawell-masser} (see also \cite[Theorem 1.5]{expEC} for a different proof).

\section{Proof of the Main Result}
\label{sec:proof}

Let $V\subseteq\CC^{2n}$ be an algebraic variety satisfying the hypotheses of Theorem~\ref{thm:ecforgamma}. 
Standard arguments about intersecting $V$ with generic hyperplanes allow us to reduce to the case where $\dim V=n$, see e.g.~\cite[Lemma 4.30]{vahagn:adequate} and \cite[Proposition 4.4]{genericsol}. 
To prove Theorem \ref{thm:ecforgamma} it suffices then to solve systems of equations of the form
\begin{equation}
\label{eq:system}
\left\{\begin{array}{ccc}
     \Gamma(z_1) &=& A_1(z_1,\ldots,z_n)\\
    &\vdots&\\
    \Gamma(z_n)&=& A_n(z_1,\ldots,z_n)
\end{array}
    \right\},
\end{equation}
where $A_1,\ldots,A_n$ are algebraic functions determined by $V$. 
We will be interested in the behavior of these functions as the $z_i$ tend to $\infty$, and some care is needed to do this correctly. 

\subsection{The general set-up}
The following constructions are mostly folklore, but for the sake of precision, we follow \cite[\textsection 3]{expEC}.

Consider the complex projective space $\mathbb{P}^n$. Given $\ell\in\{0,\ldots,n\}$ define the chart $U_\ell:=\{[z_0:\cdots:z_n]\in\mathbb{P}^n : z_\ell=1\}$. 
We now fix the embedding $\CC^n\hookrightarrow\mathbb{P}^n$ by identifying $\CC^n$ with $U_0$ via the usual map $(z_1,\ldots,z_n)\mapsto[1:z_1:\cdots:z_n]$. 

Given $\ell\in\{1,\ldots,n\}$, a point $\mathbf{c}:=[0:c_1:\cdots:c_n]\in U_\ell\subset\mathbb{P}^n$ (where $c_\ell=1$), and $\epsilon>0$, we define the \emph{polydisk of radius $\epsilon$ centered at $\mathbf{c}$} to be
\begin{equation*}
    D^{\ast}(\mathbf{c},\epsilon):=\{[z_0:\cdots:z_n]\in\mathbb{P}^n : |z_0|<\epsilon\wedge |z_k-c_k|<\epsilon \mbox{ for } k\in\{1,\ldots,n\}\wedge z_\ell=1\}.
\end{equation*}
The restriction of this polydisk to $U_0$ gives
\begin{equation*}
    D(\mathbf{c},\epsilon):=\left\{(z_1,\ldots,z_n)\in\CC^n : |z_\ell|>\frac{1}{\epsilon}\wedge \left|\frac{z_k}{z_\ell} - c_k\right|<\epsilon \mbox{ for }k\in\{1,\ldots,n\}\right\}.
\end{equation*}
Observe that for $\mathbf{z}\in D(\mathbf{c},\epsilon)$ we have that $z_k\in B(c_kz_\ell,\epsilon|z_\ell|)$, for every $k\in\{1,\ldots,n\}$. Given $\theta\in\RR$ and $\eta\in(\theta,\theta+2\pi]$ we define
\begin{equation*}
    D_{\theta,\eta}(\mathbf{c},\epsilon):=\{\mathbf{z}\in D(\mathbf{c},\epsilon) : \theta<\arg(z_\ell)<\eta \}.
\end{equation*}
Since we will be working only on $Q_{\alpha,0}^n$, it suffices to take $\arg(z)$ to always be in $\left(0, \frac{\pi}{2}\right)$. 
We also define $D^\ast_{\theta,\eta}(\mathbf{c},\epsilon):=D_{\theta,\eta}(\mathbf{c},\epsilon)\cup\left(D^{\ast}(\mathbf{c},\epsilon)\setminus D(\mathbf{c},\epsilon)\right)$.

Then by \cite[Proposition 3.2]{expEC} we know that for the variety $V$ there is a Zariski open dense subset $P\subset\mathbb{P}^n\setminus U_0$ with the following property: for every $\ell\geq 1$, $\mathbf{c}\in P\cap U_\ell$, $\theta\in\RR$, $\eta\in(\theta,\theta+2\pi]$ and all sufficiently small polydisks $D^{\ast}(\mathbf{c},\epsilon)$ there are unique continuous maps $A_1,\ldots,A_n:D^{\ast}_{\theta,\eta}(\mathbf{c},\epsilon)\to\mathbb{P}^1$ such that
\begin{enumerate}[(i)]
    \item the restriction of each $A_i$ to $D_{\theta,\eta}(\mathbf{c},\epsilon)$ is complex analytic, and
    \item for all $\mathbf{z}\in D_{\theta,\eta}(\mathbf{c},\epsilon)$, $(\mathbf{z}, A_1(\mathbf{z}),\ldots,A_n(\mathbf{z}))\in V$.
\end{enumerate}
\begin{rmk}
    Choose $\ell\in\{1,\ldots,n\}$. Observe that for every $\epsilon>0$ there are $\epsilon',\delta>0$ such that for all $\mathbf{c},\mathbf{c}'\in C\cap U_\ell$, if $\|\mathbf{c}-\mathbf{c}'\|<\delta$ then $D(\mathbf{c}',\epsilon')\subseteq D(\mathbf{c},\epsilon)$.
\end{rmk}

Choose $0<\epsilon<\frac{1}{2}$ and choose $\mathbf{c}$ so that $c_i\geq 1$ for all $i\in{1,\ldots,n-1}$. 
Let $A_1,\ldots,A_n$ be the corresponding algebraic maps defined on $D^{\ast}_{\theta,\eta}(\mathbf{c},\epsilon)$, where $\theta\in\left(-\pi,-\frac{\pi}{2}\right)$ and $\eta\in\left(\frac{\pi}{2},\pi\right)$. 
Since $V$ is free and $P$ is Zariski dense, we may assume that $A_i(c_1z,\ldots,c_{n-1}z,z)$ is not identically zero nor identically $\infty$, for all $i\in\{1,\ldots,n\}$. 

For the rest of this section we set $\ell=n$, and $A_1,\ldots,A_n$, $\mathbf{c}$, $\epsilon$ and $D_{\theta,\eta}(\mathbf{c},\epsilon)$ will remain fixed. 
Given $z_n$ with $|z_n|>\frac{1}{\epsilon}$ we have that for every $z_i\in B(c_iz_n,\epsilon|z_n|)$, $|z_i|<(c_i+\epsilon)|z_n|$. 
This shows that the coordinates of the points in $D(\mathbf{c},\epsilon)$ grow asymptotically like $|z_n|$.
So there is $M>\frac{1}{\epsilon}$ such that for every $i\in\{1,\ldots,n\}$ there are $a_i,b_i\in\RR^+$ and $d_i\in\mathbb{Z}$ such that for all $\mathbf{z}\in D_{\theta,\eta}(\mathbf{c},\epsilon)$ with $|z_n|>M$ we have
\begin{equation}
\label{eq:algfunasympt}
    a_i|z_n|^{d_i-1} < |A_i(\mathbf{z})|\leq b_i|z_n|^{d_i}. 
\end{equation}

\subsection{The proofs}

Before giving the proof of the main theorem, we present a result that is stronger in some aspects (as it allows for equations between $\Gamma$ and other holomorphic functions instead of only algebraic functions) and weaker in others (as it requires the holomorphic functions to have a non-zero finite limit at infinity). 
\begin{prop}
\label{prop:analyticec}
    Let $f_1,\ldots,f_n:Q_{\alpha,0}^n\to\CC$ be holomorphic functions such that for every $k\in\{1,\ldots,n\}$ there is $w_k\in\CC^\times$ such that
    \begin{equation*}
        \lim_{\mathbf{z}\to\infty}f_k(\mathbf{z}) = w_k.
    \end{equation*}
    Then there are infinitely many solutions to the system of equations
    \begin{align*}
            \Gamma(z_1) & =  f_1(z_1,\ldots,z_n)  \\
            & \vdots \\
            \Gamma(z_n) & =  f_n(z_1,\ldots,z_n).
        \end{align*}
\end{prop}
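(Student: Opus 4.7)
The plan is to reduce the system to a small perturbation of the decoupled system $\Gamma(z_k)=w_k$ at large moduli, and then apply a multivariate degree-theoretic argument in the spirit of Rouch\'e's theorem and the Argument Principle used in \S\ref{sec:n=1}.

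First, I would construct base solutions for the decoupled system. By Proposition~\ref{ConstModulusCurves}, each level curve $C_{|w_k|}\cap Q_{\alpha,0}$ is an unbounded $C^1$ curve, and by Lemma~\ref{ArgRateOfChange} the argument $\arg\Gamma$ cycles through $(-\pi,\pi]$ infinitely many times along it as $\Re(z)\to+\infty$. Hence for each $k$ there are infinitely many $\zeta_k\in Q_{\alpha,0}$ with $\Gamma(\zeta_k)=w_k$, and we may take $|\zeta_k|$ arbitrarily large. Fix a tuple $(\zeta_1,\ldots,\zeta_n)$ with all $|\zeta_k|$ large. Since $\Gamma$ is locally biholomorphic on $Q_{\alpha,0}$ (as used in the proof of Proposition~\ref{prop:fixarg}), each $\zeta_k$ is an isolated simple zero of $\Gamma(z)-w_k$, so we can choose small radii $r_k>0$ with $\overline{B(\zeta_k,r_k)}\subset Q_{\alpha,0}$ containing no other zero, and set $m_k:=\min_{|z-\zeta_k|=r_k}|\Gamma(z)-w_k|>0$. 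Let $P:=\prod_{k=1}^{n} B(\zeta_k,r_k)$.

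Next, I would run a multivariate Rouch\'e-type argument. By the hypothesis $f_k(\mathbf{z})\to w_k$ at infinity, taking all $|\zeta_k|$ sufficiently large at the outset ensures $|f_k(\mathbf{z})-w_k|<\delta$ on $\overline{P}$, for any fixed $\delta<\min_k m_k$. For $\mathbf{z}\in\partial P$ (taken in $\mathbb{C}^n$), some coordinate $z_{k_0}$ satisfies $|z_{k_0}-\zeta_{k_0}|=r_{k_0}$, so $|\Gamma(z_{k_0})-f_{k_0}(\mathbf{z})|\geq m_{k_0}-\delta>0$. Consider the homotopy $F_t(\mathbf{z}):=\bigl(\Gamma(z_k)-(1-t)w_k-t\,f_k(\mathbf{z})\bigr)_{k=1}^{n}$ for $t\in[0,1]$. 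The same estimate shows $F_t\neq \mathbf{0}$ on $\partial P$ for every $t$. By the homotopy invariance of the Brouwer degree, $\deg(F_1,P,\mathbf{0})=\deg(F_0,P,\mathbf{0})=1$, the latter being $1$ because $F_0$ has a single zero at $(\zeta_1,\ldots,\zeta_n)$ of local index $1$ (as $\Gamma'(\zeta_k)\neq 0$ for every $k$). So $F_1$ has a zero in $P$, which is a solution of the system. Varying the tuple $(\zeta_1,\ldots,\zeta_n)$ through its infinitely many choices at increasingly large moduli produces disjoint polydisks, and hence infinitely many distinct solutions.

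The main obstacle I anticipate is calibrating the parameters for a single tuple: one must ensure that all $|\zeta_k|$ are large enough that the perturbation $|f_k-w_k|$ is uniformly smaller on $\overline{P}$ than the moduli minima $m_k$ of $\Gamma-w_k$ on the circles $\partial B(\zeta_k,r_k)$. The quantitative lower bound for $|\Gamma'|$ near $\zeta_k$ coming from the Stirling estimate~(\ref{eq:stirlingineq}) prevents the $m_k$ from collapsing as the base point is pushed to infinity, and the finite nonzero limits $w_k$ are what let us decouple the analysis across the $n$ coordinates.
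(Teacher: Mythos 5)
Your overall strategy (compare the coupled system with the decoupled constant system $\Gamma(z_k)=w_k$ on the boundary of a product region and invoke multivariable Rouch\'e/degree theory) is the same as the paper's, but the choice of region creates a genuine gap. You take a small polydisk $P=\prod_k B(\zeta_k,r_k)$ around an exact solution tuple of the decoupled system and need $\sup_{\overline{P}}|f_k-w_k|<m_k:=\min_{|z-\zeta_k|=r_k}|\Gamma(z)-w_k|$. The problem is that $m_k$ cannot be kept bounded below as the base points go to infinity, while the hypothesis on $f_k$ gives convergence to $w_k$ with \emph{no rate}. Indeed, the solutions of $\Gamma(z)=w_k$ in $Q_{\alpha,0}$ all lie on the single level curve $C_{|w_k|}$ of Proposition~\ref{ConstModulusCurves}, and by Corollary~\ref{GammaArgCycle} consecutive solutions are within distance $\pi/(\log\lfloor\Re z\rfloor-1)^2$ of each other; since infinitely many disjoint polydisks force at least one coordinate's base points $\zeta_k$ to have $\Re\zeta_k\to+\infty$, the radius $r_k$ (needed so that $\overline{B(\zeta_k,r_k)}$ contains only the one zero, of index $1$) must shrink like $(\log\Re\zeta_k)^{-2}$. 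Your proposed rescue via a Stirling lower bound on $|\Gamma'|$ does not work: $|\Gamma'|=|\psi\,\Gamma|$ grows only like $|w_k|\log|\zeta_k|$ near the level set $|\Gamma|=|w_k|$, so $m_k\lesssim|\Gamma'(\zeta_k)|\,r_k\lesssim |w_k|/\log|\zeta_k|\to 0$. (Allowing several zeros inside a fixed-radius ball does not obviously help either, since then you have no control on how close a zero of $\Gamma-w_k$ comes to the bounding circle, hence no lower bound for $m_k$ at all.) Consequently, for $f_k$ converging to $w_k$ slowly (say like $1/\log\log\|\mathbf{z}\|$), the inequality $\sup_{\overline{P}}|f_k-w_k|<m_k$ fails for all sufficiently distant base points, and your homotopy $F_t$ can vanish on $\partial P$, so the degree argument breaks.

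The paper's proof avoids exactly this quantifier clash by not using metric balls: it takes $\Omega_k$ to be the region bounded by the curve $K(\beta_k,R_k)$, whose four sides are two modulus-level curves $|\Gamma|=r_k$ and $|\Gamma|=R_k$ with $r_k<|w_k|<R_k$ and two constant-argument curves $\arg\Gamma=\arg(-w_k)$. On such a boundary one gets $|\Gamma(z_k)-w_k|\ge\delta:=\min_k\{R_k-|w_k|,\,|w_k|-r_k\}$, a separation that is \emph{independent of how far out the region is placed}; mere convergence $f_k\to w_k$ then suffices to dominate it, and surjectivity of $\Gamma$ from $\Omega_k$ onto the annulus $r_k<|z|<R_k$ supplies the zero of the comparison system. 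To repair your argument you would essentially have to replace your shrinking polydisks with regions of this kind (or otherwise impose a rate of convergence on the $f_k$, which the proposition does not assume).
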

\begin{proof}
    For every $k\in\{1,\ldots,n\}$ choose positive real numbers $0<r_k<R_k$ such that $r_k<|w_k| < R_k$. 
    Let $\delta:=\min_{k=1,\ldots,n}\{R_k-|w_k|, |w_k|-r_k\}$, and let $M>0$ be such that if $\mathbf{z}\in Q_{\alpha,0}^n$ satisfies $\|\mathbf{z}\|>M$, then  for every $k\in\{1,\ldots,n\}$ we have $|f_k(\mathbf{z})-w_k| < \delta$. 
    Choose $\boldsymbol{\beta}\in Q_{\alpha,0}^n$ so that $\|\boldsymbol{\beta}\|\geq M$, $|\Gamma(\beta_k)|=r_k$ and $\arg(\Gamma(\beta_k)) = \arg(-w_k)$. 
    Let $\Omega_k$ be the region bounded by $K(\beta_k,R_k)$, and let $\Omega := \Omega_1\times\cdots\times\Omega_n$.
    We remark that for every $\mathbf{z}\in\partial\Omega$ there is $k\in\{1,\ldots,n\}$ such that $z_k\in K(\beta_k,R_k)$. 
    
    Now, for every $z\in K(\beta_k,R_k)$ one of the following scenarios must occur:
    \begin{enumerate}[(a)]
        \item $|\Gamma(z)| = r_k$, in which case $|\Gamma(z)-w_k|\geq |w_k|-r_k$.
        \item $\arg(\Gamma(z))=\arg(\Gamma(\beta_k))=\arg(-w_k)$, in which case the line through $\Gamma(z)$ and $w_k$ goes through 0, and 0 is in between $\Gamma(z)$ and $w_k$. Then $|\Gamma(z)-w_k|$ is the length of this line segment, which is at least $|w_k| + r_k$.
        \item $|\Gamma(z)| = R_k$, in which case $|\Gamma(z)-w_k|\geq R_k-|w_k|$.
    \end{enumerate}
    
    So given $\mathbf{z}\in\partial\Omega$ there is $k\in\{1,\ldots,n\}$ such that $|\Gamma(z_k)-w_k|>\delta$, and thus $|\Gamma(z_k)-w_k| > |f_k(\mathbf{z})-w_k|$.
    By the multivariable version of Rouch\'e's theorem (see e.g.~\cite[Theorem 2 in Chapter IV, \textsection 18.55]{shabat}) we know that the number of zeros of $(\Gamma(z_1)-f_1(\mathbf{z}),\ldots,\Gamma(z_n)-f_n(\mathbf{z}))$ in $\Omega$ equals the number of zeros of $(\Gamma(z_1)-w_1,\ldots,\Gamma(z_n)-w_n)$ in $\Omega$. 
    By the construction of $K(\beta_k,R_k)$, $\Gamma(\Omega_k) = \{z\in\CC : r_k<|z|<R_k\}$, so there is $z\in \Omega_k$ such that $\Gamma(z)=w_k$, and thus there is a zero of $(\Gamma(z_1)-w_1,\ldots,\Gamma(z_n)-w_n)$ in $\Omega$. 
    We may now repeat the argument by increasing $\|\boldsymbol{\beta}\|$ to get infinitely many zeros of $(\Gamma(z_1)-f_1(\mathbf{z}),\ldots,\Gamma(z_n)-f_n(\mathbf{z}))$. 
\end{proof}

Now we move towards the proof of Theorem~\ref{thm:ecforgamma}. We need one lemma first. 

\begin{lemma}
\label{lem:ineqalgfunc}
    Let $A:D_{\theta,\eta}(\mathbf{c},\epsilon)\to\CC$ be an algebraic function which is continuous on $D^{\ast}_{\theta,\eta}(\mathbf{c},\epsilon)$.
    Let $d \in \RR^+$. 
    Then there is $N \in \RR^+$ such that for all $\mathbf{z} \in D_{\theta,\eta}(\mathbf{c},\epsilon)$ with $\|\mathbf{z}\|>N$ and $\mathbf{w} \in \CC^n$ with $\|\mathbf{w}\| \le d$ satisfying $\mathbf{z}+\mathbf{w}\in D_{\theta,\eta}(\mathbf{c},\epsilon)$, 
    \begin{align*}
        |A(\mathbf{z}+\mathbf{w})-A(\mathbf{z})| < \frac{|A(\mathbf{z})|}{4} .
    \end{align*}
\end{lemma}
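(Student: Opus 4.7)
The strategy is to recast the problem in the chart $U_n\subset\mathbb{P}^n$, in which $D^{\ast}(\mathbf{c},\epsilon)$ becomes a genuine (Euclidean) polydisk centered at $\mathbf{c}$ and $A$ is seen as a function that is continuous at $\mathbf{c}$; the bound will then follow from continuity at that single point.

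I would first introduce the $U_n$-coordinates $u_0 = 1/z_n$ and $u_k = z_k/z_n$ for $1\le k\le n-1$. In these coordinates, $\mathbf{c}$ becomes $\mathbf{c}':=(0,c_1,\ldots,c_{n-1})$, and $D^{\ast}(\mathbf{c},\epsilon)$ becomes the standard open polydisk of radius $\epsilon$ around $\mathbf{c}'$; the hypothesis then says that $A$ pulls back to a function $\tilde A$ that is continuous on this polydisk, in particular at $\mathbf{c}'$. We may (and do) further restrict $P$ so that $\tilde A(\mathbf{c}')\ne 0$, which is compatible with the nondegeneracy requirement already imposed on $A_1,\ldots,A_n$ at the start of \S4.1 and is in any case necessary for the inequality in the lemma to be nonvacuous.

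For $\mathbf{z}\in D_{\theta,\eta}(\mathbf{c},\epsilon)$ with $\|\mathbf{z}\|$ large and $\mathbf{w}\in\CC^n$ with $\|\mathbf{w}\|\le d$ and $\mathbf{z}+\mathbf{w}\in D_{\theta,\eta}(\mathbf{c},\epsilon)$, I would directly estimate the shift in $u$-coordinates:
$$\bigl|u_0(\mathbf{z}+\mathbf{w})-u_0(\mathbf{z})\bigr|=\frac{|w_n|}{|z_n||z_n+w_n|}\le\frac{d}{|z_n|(|z_n|-d)},$$
and analogously $|u_k(\mathbf{z}+\mathbf{w})-u_k(\mathbf{z})|=O(1/|z_n|)$ for $k<n$, with implicit constants depending only on $d$, $\epsilon$, and $\mathbf{c}$, using the bound $|z_k|\le(c_k+\epsilon)|z_n|$ that holds on $D(\mathbf{c},\epsilon)$. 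Since membership in $D(\mathbf{c},\epsilon)$ also forces $\|\mathbf{z}\|$ and $|z_n|$ to be of comparable magnitude, $\|\mathbf{z}\|\to\infty$ implies $|z_n|\to\infty$; hence both $\mathbf{u}(\mathbf{z})$ and $\mathbf{u}(\mathbf{z}+\mathbf{w})$ approach $\mathbf{c}'$, and their separation shrinks to $0$, uniformly in $\mathbf{w}$ with $\|\mathbf{w}\|\le d$.

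Given $\delta>0$, continuity of $\tilde A$ at $\mathbf{c}'$ yields $\rho>0$ such that $\|\mathbf{u}-\mathbf{c}'\|<\rho$ forces $|\tilde A(\mathbf{u})-\tilde A(\mathbf{c}')|<\delta$. Taking $\delta:=|\tilde A(\mathbf{c}')|/12$ and choosing $N$ large enough that the previous paragraph's estimates place both $\mathbf{u}(\mathbf{z})$ and $\mathbf{u}(\mathbf{z}+\mathbf{w})$ in the $\rho$-ball around $\mathbf{c}'$ whenever $\|\mathbf{z}\|>N$, the triangle inequality gives
$$\bigl|\tilde A(\mathbf{u}(\mathbf{z}+\mathbf{w}))-\tilde A(\mathbf{u}(\mathbf{z}))\bigr|<2\delta\qquad\text{and}\qquad|\tilde A(\mathbf{u}(\mathbf{z}))|>\tfrac{11}{12}|\tilde A(\mathbf{c}')|,$$
so the ratio is bounded by $2\delta/\bigl(\tfrac{11}{12}|\tilde A(\mathbf{c}')|\bigr)=\tfrac{2}{11}<\tfrac{1}{4}$. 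Translating back to $\mathbf{z}$-coordinates gives the stated inequality. The main thing to watch is the tacit nonvanishing $\tilde A(\mathbf{c}')\ne 0$, which is built into the choice of the Zariski-dense $P$ (and of $\mathbf{c}\in P$) at the start of \S4.1; all the remaining work is the elementary bookkeeping about comparability of $\|\mathbf{z}\|$ with $|z_n|$ on $D(\mathbf{c},\epsilon)$ together with continuity at one point.
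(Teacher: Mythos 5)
Your reduction to the chart $U_n$ is the right setting, but the key dynamical claim is false: for $\mathbf{z}\in D_{\theta,\eta}(\mathbf{c},\epsilon)$ with $\|\mathbf{z}\|\to\infty$ you only get $u_0(\mathbf{z})=1/z_n\to 0$; the remaining coordinates $u_k(\mathbf{z})=z_k/z_n$ are merely constrained to lie in $B(c_k,\epsilon)$ and need not converge to $c_k$. For instance the points $z_k=(c_k+\tfrac{\epsilon}{2})z_n$ stay in $D(\mathbf{c},\epsilon)$ for all $|z_n|$, and along them $\mathbf{u}(\mathbf{z})\to(0,c_1+\tfrac{\epsilon}{2},\dots,c_{n-1}+\tfrac{\epsilon}{2})\neq\mathbf{c}'$. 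So continuity of $\tilde A$ at the single point $\mathbf{c}'$, together with $\tilde A(\mathbf{c}')\neq 0$, gives you no control along such sequences: neither the oscillation bound $|\tilde A(\mathbf{u}(\mathbf{z}+\mathbf{w}))-\tilde A(\mathbf{u}(\mathbf{z}))|<2\delta$ nor the lower bound $|\tilde A(\mathbf{u}(\mathbf{z}))|>\tfrac{11}{12}|\tilde A(\mathbf{c}')|$ follows, because $\mathbf{u}(\mathbf{z})$ can accumulate anywhere on the whole slice at infinity $\{u_0=0\}\times\prod_k \overline{B(c_k,\epsilon)}$, where $\tilde A$ may be small or vanish. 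Your shift estimates (that $\mathbf{u}(\mathbf{z}+\mathbf{w})-\mathbf{u}(\mathbf{z})\to 0$ uniformly for $\|\mathbf{w}\|\le d$, and that $|z_n|$ is comparable to $\|\mathbf{z}\|$) are fine, but they only say the two points get close to each other, not close to $\mathbf{c}'$.

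The second issue is the extra hypothesis $\tilde A(\mathbf{c}')\neq 0$. The lemma carries no such assumption, and in the application it cannot be arranged: the functions $A_i$ are allowed to tend to $0$ (or $\infty$) at infinity, as the asymptotics (\ref{eq:algfunasympt}) with arbitrary $d_i\in\ZZ$ show, and the nondegeneracy imposed in \S 4.1 is only that $A_i(c_1z,\dots,c_{n-1}z,z)$ is not identically $0$ or $\infty$, which does not imply the continuous extension is nonzero at $\mathbf{c}'$ (take $A(\mathbf{z})=1/z_n$). This is precisely where the difficulty of the lemma lies: the required inequality is \emph{relative} to $|A(\mathbf{z})|$, and when the boundary value vanishes, continuity at (or even uniform continuity near) the slice at infinity only gives absolute smallness of $|A(\mathbf{z}+\mathbf{w})-A(\mathbf{z})|$, not smallness compared with $|A(\mathbf{z})|$. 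The paper instead establishes the multiplicative statement that $|A(\mathbf{z}+\mathbf{w})/A(\mathbf{z})-1|\to 0$ as $\|\mathbf{z}\|\to\infty$ in $D_{\theta,\eta}(\mathbf{c},\epsilon)$, uniformly for $\mathbf{w}$ in a compact set, which exploits that $A$ is an algebraic function (with controlled growth or decay in $|z_n|$) rather than merely continuous at one point of the boundary. To repair your argument you would need exactly such a uniform ratio estimate near the whole boundary slice, at which point you have reproduced the paper's proof rather than shortcut it.
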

\begin{proof}
Given $\mathbf{w} \in \CC^n$, then by continuity of $A$ we have that
\begin{equation*}
    \lim_{\|\mathbf{z}\|\to+\infty}\left|\frac{A(\mathbf{z}+\mathbf{w})}{A(\mathbf{z})} - 1\right| = 0,
\end{equation*}
where $\mathbf{z}$ only moves within $D_{\theta,\eta}(\mathbf{c},\epsilon)$. If $\mathbf{w}$ is restricted to move within a compact set, then the limit is uniform, so there exists $N$ satisfying the conclusion of the lemma.
\end{proof}

\begin{proof}[Proof of Theorem~\ref{thm:ecforgamma}]
Choose $M_1>0$ and $j\in\{1,\ldots,n\}$ such that for all $i\in\{1,\ldots,n\}$ and all $\mathbf{z}\in D_{\theta,\eta}(\mathbf{c},\epsilon)$ with $|z_n|>M_1$, $|A_j(\mathbf{z})|\leq |A_{i}(\mathbf{z})|$. 
Let $r = \max(16, d_i-d_j+3)$.
Choose $\boldsymbol{\xi}\in D_{\theta,\eta}(\mathbf{c},\epsilon)$ with $|\xi_n|>M_1$ satisfying the following conditions:
\begin{enumerate}[(i)]
    \item  For all $i\in\{1,\ldots,n\}$, $\xi_{i}\in Q_{r,r}$ and 
    \begin{equation*}
        |\xi_i|\geq \frac{5b_i}{a_j(c_i-\epsilon)^{d_i-d_j+1}}.
    \end{equation*}
    \item For all $\mathbf{z}\in \prod_{i=1}^{n}B(\xi_{i},d_i-d_j+3)$ (here we use Lemma~\ref{lem:ineqalgfunc})
\begin{equation*}
    |A_i(\mathbf{z}) - A_i(\boldsymbol{\xi})| < \frac{|A_{i}(\boldsymbol{\xi})|}{4}.
\end{equation*}
We remark that by the choice of $A_j$ we always have that $d_i-d_j\geq 0$.
    \item For all $i\in\{1,\ldots,n\}$, $\displaystyle|\Gamma(\xi_i)| = \frac{|A_j(\boldsymbol{\xi})|}{4}$.
\end{enumerate}

Choose $\beta_i\in S(\xi_i,\xi_i^\ast)$ so that  $\arg(\Gamma(\beta_i))=\arg (-A_i(\boldsymbol{\xi}))$. 
Given $i\in\{1,\ldots,n\}$ consider the curve $K(\beta_i, R_i)$, where $R_i = |\Gamma(\beta_i+d_i-d_j+2)|$. 
Let $\Omega_i\subset\mathbb{C}$ be the region bounded by $K(\beta_i,R_i)$. 
Define $\Omega:=\Omega_1\times\cdots\times\Omega_n$.

For every $\mathbf{z}\in\partial\Omega$ there is $k\in\{1,\ldots,n\}$ such that $z_k\in K(\beta_k,R_k)$. 
Observe that in this case we have
\begin{enumerate}[(a)]
    \item If $z_k\in S(\beta_k,\beta_k^\ast)$, then $|\Gamma(z_k)| = |\Gamma(\beta_k)| = |\Gamma(\xi_k)|= \frac{|A_j(\boldsymbol{\xi})|}{4}\leq \frac{|A_k(\boldsymbol{\xi})|}{4}$, so
    \begin{equation*}
        |\Gamma(z_k)-A_k(\boldsymbol{\xi})| \geq\frac{3}{4}|A_k(\boldsymbol{\xi})|.
    \end{equation*}
    \item If $z_k\in T(\beta_k,R_k) \cup T(\beta_k^\ast,R_k)$, then the points $\Gamma(z_k)$, $A_k(\boldsymbol{\xi})$ and $0$ are collinear, with 0 being in between the other two. Therefore 
    \begin{equation*}
        |\Gamma(z_k)-A_k(\boldsymbol{\xi})| \geq|A_k(\boldsymbol{\xi})|.
    \end{equation*}
    \item If $z_k\in S\big(\rho(\beta_k,R_k),\rho(\beta_k^\ast,R_k)\big)$, then we get
    \begin{align*}
        |\Gamma(z_k)| &= R_k & \\
        &= |\Gamma(\beta_k)|\prod_{m=0}^{d_k-d_j+1}|\beta_k+m| & \mbox{by (\ref{eq:diffeq1})}\\
        &\geq |\Gamma(\xi_k)||\xi_k|^{d_k-d_j+2} & \mbox{since } |\beta_k|\geq|\xi_k|\\
        &\geq \frac{|A_j(\boldsymbol{\xi})|}{4} \cdot \frac{5b_k}{a_j(c_k-\epsilon)^{d_k-d_j+1}}\cdot |\xi_k|^{d_k-d_j+1} & \mbox{by (i)}\\
        &\geq \frac{5b_k}{4a_j}|A_j(\boldsymbol{\xi})||\xi_n|^{d_k-d_j+1} & \mbox{since }\boldsymbol{\xi}\in D(\mathbf{c},\epsilon)\\
        &\geq \frac{5b_k}{4a_j}a_j|\xi_n|^{d_j-1}|\xi_n|^{d_k-d_j+1} & \mbox{by (\ref{eq:algfunasympt})}\\
        &= \frac{5}{4}b_k|\xi_n|^{d_k} & \\
        &\geq \frac{5}{4}|A_k(\boldsymbol{\xi})| & \mbox{by (\ref{eq:algfunasympt})}.
    \end{align*}
    Therefore $|\Gamma(z_k)-A_k(\boldsymbol{\xi})| \geq\frac{1}{4}|A_k(\boldsymbol{\xi})|$.
\end{enumerate}
This shows that for all $\mathbf{z}\in\partial\Omega$ with $z_k\in K(\beta_k,R_k)$, we have that 
\begin{equation*}
    |\Gamma(z_k)-A_k(\boldsymbol{\xi})| \geq\frac{1}{4}|A_k(\boldsymbol{\xi})|  
    > |A_k(\mathbf{z})-A_{k}(\boldsymbol{\xi})|,
\end{equation*}
by (ii). Observe that the image of $K(\beta_k,R_k)$ under $\Gamma$ is the annulus $\left\{z\in\CC : \frac{1}{4}|A_j(\boldsymbol{\xi})|\le|z|\le R_k\right\}$, so there exists $\boldsymbol{\chi}\in\Omega$ such that $\Gamma(\boldsymbol{\chi}) = (A_1(\boldsymbol{\xi}),\ldots,A_n(\boldsymbol{\xi}))$. Therefore by the multivariable version of Rouch\'e's theorem we have that there is a point $\mathbf{w}\in\Omega$ such that $\Gamma(\mathbf{w}) = (A_1(\mathbf{w}),\ldots,A_{n}(\mathbf{w}))$. 

So far we have proven that the intersection of $V\subset\CC^{2n}$ with the graph of $\Gamma$ is infinite for every positive integer $n$ and for every variety $V$ with dominant projection onto the first $n$ coordinates and no constant coordinates. 
Rabinowitsch's trick implies that this is enough to obtain Zariski density (see \cite[Proposition 4.34]{vahagn:adequate} and \cite[Lemma 3.8]{genericsol}). 
Indeed, given a proper subvariety $Z$ of $V$ 
we can find a polynomial $F\in\CC[\mathbf{X},\mathbf{Y}]$ which vanishes on $Z$ but not on all of $V$.
We can now define the following subvariety of $\CC^{2n+2}$:
\[V':=\{(\mathbf{x},x_{n+1},\mathbf{y},y_{n+1})\in\CC^{2n+2} : y_{n+1}F(\mathbf{x},\mathbf{y}) = 1\}.\]
Observe that $V'$ has a dominant projection onto the first $n+1$ coordinates, and that the image of $V'$ under the algebraic map $\CC^{2n+2}\to\CC^{2n}$ given by $(\mathbf{x},x_{n+1},\mathbf{y},y_{n+1})\mapsto(\mathbf{x},\mathbf{y})$ lands in $V\setminus Z$. 
Using what we have proven, we know that $V'$ contains a point in the graph of $\Gamma$, and therefore so does $V\setminus Z$.
\end{proof}

\begin{rmk}
    Every point in the intersection between $V$ and the graph of $\Gamma$ is a solution of a system of the form (\ref{eq:system}). There are only finitely many such systems for each $V$. For each system we can choose a corresponding domain $D_{\theta,\eta}(\mathbf{c},\epsilon)$. We can cover $\left(\mathbb{Z}^+\right)^n$ by finitely many such domains. By (\ref{eq:algfunasympt}) we can conclude that when $V$ satisfies the conditions of Theorem~\ref{thm:ecforgamma} and $\dim V=n$, then the set $\{(\mathbf{z},\Gamma(\mathbf{z}))\in V : \mathbf{z}\in \left(\mathbb{Z}^+\right)^n\}$ is finite, since when $\|\mathbf{z}\|$ is large enough, there is some $k\in\{1,\ldots,n\}$ for which $|\Gamma(z_k)|$ is much bigger than $|A_k(\mathbf{z})|$.   
\end{rmk}

Given a function $f$ and a positive integer $n$ we write $f^{\circ n}$ to denote the composition of $f$ with itself $n$ times. That is $f^{\circ 1} := f$ and $f^{\circ (m+1)} := f^{\circ m}\circ f$. Theorem \ref{thm:ecforgamma} gives a new proof of the following result, which can also be deduced from the general result that mermomorphic transcendental functions have periodic points of every period, see \cite[Theorem 2]{bergweiler}. 

\begin{cor}
    $\Gamma$ has infinitely many periodic points of every period. In other words, for every positive integer $n$, there are infinitely many points $z$ such that $\Gamma^{\circ n}(z) = z$ and $\Gamma^{\circ m}(z)\neq z$ for all $m\in\{1,\ldots,n-1\}$. 
\end{cor}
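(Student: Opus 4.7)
The plan is to apply Theorem \ref{thm:ecforgamma} directly to a variety that encodes the full dynamical system of $n$-fold iteration, and then carve out the points of smaller period by algebraic means.

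First I would fix $n$ and define the affine linear variety $V_n \subseteq \CC^{2n}$ cut out by
\begin{equation*}
    y_i = z_{i+1} \text{ for } i = 1, \ldots, n-1, \qquad y_n = z_1,
\end{equation*}
in the standard coordinates $(z_1,\ldots,z_n,y_1,\ldots,y_n)$. This variety is irreducible of dimension $n$; its projection onto the first $n$ coordinates is the identity isomorphism, so in particular $\dim \pi_1(V_n) = n$; and each coordinate function equals some free $z_j$ and is therefore non-constant. Thus $V_n$ satisfies all the hypotheses of Theorem \ref{thm:ecforgamma}, producing a Zariski dense subset $S \subseteq V_n$ of points of the form $(\mathbf{z},\Gamma(\mathbf{z}))$. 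Unpacking, each such point corresponds to a tuple with $\Gamma(z_i)=z_{i+1}$ for $i<n$ and $\Gamma(z_n)=z_1$, which iterates to $z_i=\Gamma^{\circ (i-1)}(z_1)$ and $\Gamma^{\circ n}(z_1)=z_1$. So every element of $S$ yields a periodic point of $\Gamma$ whose period divides $n$.

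Next I would isolate the points of \emph{exact} period $n$ by subtracting off the locus of smaller-period points. For each proper divisor $m$ of $n$ (so in particular $m+1\le n$), define the subvariety $W_m := V_n \cap \{z_{m+1}=z_1\}$. On points of $S$ the equation $z_{m+1}=z_1$ is precisely $\Gamma^{\circ m}(z_1)=z_1$, so $S\cap W_m$ consists exactly of the points of $S$ whose $\Gamma$-period divides $m$. Each $W_m$ is a proper subvariety of the irreducible $V_n$, since $z_1$ and $z_{m+1}$ are distinct free coordinates on $V_n$. Hence the finite union $W := \bigcup_{m\mid n,\,m<n} W_m$ is also a proper subvariety, and Zariski density of $S$ forces $S\setminus W$ to be Zariski dense, and so infinite. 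Every point of $S\setminus W$ gives a periodic point of $\Gamma$ of exact period $n$.

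Given Theorem \ref{thm:ecforgamma}, this argument is essentially immediate, and there is no real obstacle beyond the routine checks on $V_n$ and on the properness of each $W_m$; the real work is all hidden in the main theorem. The only edge case worth flagging is $n=1$, where there are no proper divisors, $W$ is empty, and the claim reduces to the infinitude of fixed points of $\Gamma$, already recorded as a corollary of Theorem \ref{OneVarEC}.
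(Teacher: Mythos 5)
Your proposal is correct and is essentially the paper's own argument: apply Theorem~\ref{thm:ecforgamma} to the cyclic variety cut out by $Y_i=X_{i+1}$ and $Y_n=X_1$, then use Zariski density to avoid the finitely many proper subvarieties encoding smaller periods. The only (cosmetic) differences are that you exclude only the proper divisors $m\mid n$ via $z_{m+1}=z_1$ (relying on the standard fact that the minimal period divides $n$), while the paper excludes all $m<n$ via $X_1=Y_m$; on the variety these conditions agree.
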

\begin{proof}
    Consider the variety
    \begin{equation*}
        V:=\left\{\begin{array}{rcc}
    X_{2} & = & Y_{1} \\
    X_{3} & = & Y_{2}\\
    &\vdots&\\
    X_1 &=& Y_n
\end{array}\right\}.
    \end{equation*}
    By Theorem~\ref{thm:ecforgamma} we know that this variety has a Zariski dense intersection with the graph of $\Gamma$. This proves that there are infinitely many points $z$ satisfying $\Gamma^{\circ n}(z) = z$. If $m\in\{1,\ldots,n-1\}$, then the points which satisfy $\Gamma^{\circ m}(z)= z$ and $\Gamma^{\circ n}(z)= z$ lie in a proper subvariety of $V$, namely the one cut out by the extra condition $X_1 = Y_m$. Since there are finitely many possible values of $m$, then by Zariski density we may find a point with the desired properties. 
\end{proof}

\begin{rmk}
    The same argument used to prove Corollary~\ref{DistributionLowerBd} shows that given $\boldsymbol{\xi}$ as in the proof of Theorem~\ref{thm:ecforgamma}, there are $n$ other nearby starting points for the argument which lead to distinct points of the form $(\boldsymbol{w},\Gamma(\boldsymbol{w}))$ in the variety $V$.
    More precisely, there are 
    $\boldsymbol{\xi}_1',\dots,\boldsymbol{\xi}_n' \in D_{\theta,\eta}(\boldsymbol{c},\epsilon)$ with $\|\boldsymbol{\xi}_i'\| > \|\boldsymbol{\xi}\|$ and $\|\boldsymbol{\xi}_i'-\boldsymbol{\xi}\| < 1 + \frac{4\pi}{(\log \lfloor \Re(\xi_i)\rfloor -1)^2}$ for $i=1,\dots,n$ which satisfy conditions (i), (ii), and (iii) in the proof of Theorem~\ref{thm:ecforgamma} and produce disjoint regions $\Omega_1',\dots,\Omega_n' \subset D_{\theta,\eta}(\boldsymbol{c},\epsilon)$, each of which contains a point $\boldsymbol{w}_i$ with $\Gamma(\boldsymbol{w}_i) = \big(A_1(\boldsymbol{w}_i),\dots,A_n(\boldsymbol{w}_i)\big)$.
    This observation could help to obtain a distribution result like Corollary~\ref{DistributionLowerBd} in the higher dimensional cases treated in Theorem~\ref{thm:ecforgamma}.
\end{rmk}

\subsubsection*{No data is associated with this publication}

\AtNextBibliography{\small}
\printbibliography

\end{document}